\def\isarxiv{1}
\def\bE{\mathbb{E}}
\def\bP{\mathbb{P}}
\def\bR{\mathbb{R}}
\def\bZ{\mathbb{Z}}
\def\bfA{\mathbf{A}}
\def\cN{\mathcal{N}}
\def\cP{\mathcal{P}}
\def\cW{\mathcal{W}}
\def\cX{\mathcal{X}}
\def\cY{\mathcal{Y}}
\def\cZ{\mathcal{Z}}
\DeclareFontFamily{U}{mathx}{}
\DeclareFontShape{U}{mathx}{m}{n}{<-> mathx10}{}
\DeclareSymbolFont{mathx}{U}{mathx}{m}{n}
\DeclareMathAccent{\widehat}{0}{mathx}{"70}
\DeclareMathAccent{\widecheck}{0}{mathx}{"71}
\def\iidsim{\stackrel{\text{i.i.d.}}{\sim}}
\def\ol{\overline}
\def\wh{\widehat}
\def\wt{\widetilde}
\def\aas{\text{a.a.s.}}
\newcommand{\circnum}[1]{%
  \text{\ding{\the\numexpr #1+191}}%
}
\DeclareMathOperator\arctanh{\mathrm{arctanh}}
\DeclareMathOperator\Aut{\mathrm{Aut}}
\DeclareMathOperator\BEC{\mathrm{BEC}}
\DeclareMathOperator\BP{\mathrm{BP}}
\DeclareMathOperator\br{\mathrm{br}}
\DeclareMathOperator\BOHT{\mathrm{BOHT}}
\DeclareMathOperator\BSC{\mathrm{BSC}}
\DeclareMathOperator\HSBM{\mathrm{HSBM}}
\DeclareMathOperator\Id{\mathrm{Id}}
\DeclareMathOperator\KL{\mathrm{KL}}
\DeclareMathOperator\Pois{\mathrm{Pois}}
\DeclareMathOperator\SKL{\mathrm{SKL}}
\DeclareMathOperator\SNR{\mathrm{SNR}}
\DeclareMathOperator\Unif{\mathrm{Unif}}
\DeclareMathOperator\Var{\mathrm{Var}}
\newtheorem{theorem}{Theorem}
\newtheorem{lemma}[theorem]{Lemma}
\newtheorem{proposition}[theorem]{Proposition}
\newtheorem{corollary}[theorem]{Corollary}
\theoremstyle{definition}
\newtheorem{definition}[theorem]{Definition}
\newtheorem{condition}[theorem]{Condition}
\newtheorem{condition}[theorem]{Condition}
\begin{document}

\ifdefined\isarxiv
\title{Weak Recovery Threshold for the Hypergraph Stochastic Block Model}
\date{}
\author{
Yuzhou Gu\thanks{\texttt{yuzhougu@mit.edu}. MIT.}
\and
Yury Polyanskiy\thanks{\texttt{yp@mit.edu}. MIT.}
}
\else
\title[Weak Recovery Threshold for the Hypergraph Stochastic Block Model]{Weak Recovery Threshold for the Hypergraph Stochastic Block Model}
\coltauthor{%
 \Name{Yuzhou Gu} \Email{yuzhougu@mit.edu}\\
 \addr Masaschusetts Institute of Technology
 \AND
 \Name{Yury Polyanskiy} \Email{yp@mit.edu}\\
 \addr Masaschusetts Institute of Technology%
}
\fi

\maketitle

\begin{abstract}%
  We study the weak recovery problem on the $r$-uniform hypergraph stochastic block model ($r$-HSBM) with two balanced communities. In HSBM a random graph is constructed by placing hyperedges with higher density if all vertices of a hyperedge share the same binary label, and weak recovery asks to recover a non-trivial fraction of the labels.
  We introduce a multi-terminal version of strong data processing inequalities (SDPIs), which we call the multi-terminal SDPI, and use it to prove a variety of impossibility results for weak recovery.
  In particular, we prove that weak recovery is impossible below the Kesten-Stigum (KS) threshold if $r=3,4$, or a strength parameter $\lambda$ is at least $\frac 15$.
  Prior work~\cite{pal2021community} established that weak recovery in HSBM is always possible above the KS threshold. Consequently, there is no information-computation gap for these cases, which (partially) resolves a conjecture of \cite{angelini2015spectral}. To our knowledge this is the first impossibility result for HSBM weak recovery.

  As usual, we reduce the study of non-recovery of HSBM to the study of non-reconstruction in a related broadcasting on hypertrees (BOHT) model. While we show that BOHT's reconstruction threshold coincides with KS for $r=3,4$, surprisingly, we demonstrate that for $r\ge 7$ reconstruction is possible also below KS.
  This shows an interesting phase transition in the parameter $r$, and suggests that for $r\ge 7$, there might be an information-computation gap for the HSBM. For $r=5,6$ and large degree we propose an approach for showing non-reconstruction below KS, suggesting that $r=7$ is the correct threshold for onset of the new phase.
%
\end{abstract}

\ifdefined\isarxiv
\else
\begin{keywords}%
  hypergraph stochastic block model, weak recovery, broadcasting on hypertrees, multi-terminal strong data processing inequalities, information-computation gap%
\end{keywords}
\fi


\section{Introduction} \label{sec:intro}
\paragraph{Hypergraph stochastic block model.}
The stochastic block model (SBM) is a random graph model with community structures. It exhibits many interesting behaviors and has received a lot of attention in the last decade (see \cite{abbe2017community} for a survey).
The hypergraph stochastic block model (HSBM) is a generalization of SBM to hypergraphs, which arguably models real social networks better due to the existence of small clusters. It was first considered in \cite{ghoshdastidar2014consistency} and has been studied in a number of works, e.g., \cite{angelini2015spectral,ghoshdastidar2015provable,ghoshdastidar2015spectral,ghoshdastidar2017consistency,chien2018community,chien2019minimax,lin2017fundamental,ahn2018hypergraph,kim2018stochastic,cole2020exact,pal2021community,dumitriu2021partial,zhang2022exact,zhang2022sparse,dumitriu2023exact}.

We consider the $r$-uniform HSBM, where all hyperedges have the same size $r$.
The model has two parameters $a>b\in \bR_{\ge 0}$.
The HSBM hypergraph is generated as follows: Let the vertex set be $V=[n]$.
Generate a random label $X_u$ for all vertices $u\in V$ i.i.d.~$\sim \Unif(\{\pm\})$. Then, for every $S\in \binom{V}r$, if
all vertices in $S$ have the same label, add hyperedge $S$ with probability $\frac{a}{\binom n{r-1}}$;
otherwise add hyperedge $S$ with probability $\frac{b}{\binom n{r-1}}$.
We denote the model as $\HSBM(n,2,r,a,b)$ (where $2$ means there are two communities).

For SBM and HSBM the most important problem is to recover $X$ from observing only $G$.
Due to symmetry in the labels, we can only hope for recovering the communities up to a global sign flip.
Thus we define the distance between two labelings $X,Y\in \{\pm\}^V$ as
\begin{align}
  d_H(X, Y) = \min_{s\in \{\pm\}}\sum_{u\in V} \mathbbm{1}\{X_i \ne s Y_i\}.
\end{align}
There are three kinds of recovery guarantees commonly seen in the literature.
\begin{itemize}
  \item Exact recovery (strong consistency): The goal is to recover the labels exactly, i.e., to design an estimator $\wh X = \wh X(G)$ such that
  \begin{align}
    \lim_{n\to \infty} \bP[d_H(\wh X,X)=0] =1.
  \end{align}
  \item Almost exact recovery (weak consistency): The goal is the recover almost all labels, i.e., to design an estimator $\wh X = \wh X(G)$ such that
  \begin{align}
    \lim_{n\to \infty} \bP[d_H(\wh X,X)=o(n)]=1.
  \end{align}
  \item Weak recovery (partial recovery): The goal is to recover a non-trivial fraction of the labels, i.e., to design an estimator $\wh X = \wh X(G)$ such that there exists a constant $c<\frac 12$ such that
  \begin{align}
    \lim_{n\to \infty} \bP[d_H(\wh X,X) \le (c+o(1))n]=1.
  \end{align}
  Note that a trivial algorithm achieves $c = \frac 12$.
\end{itemize}
Different recovery questions are relevant in different parameter regimes.
For exact recovery and almost exact recovery, the phase transition occurs at expected degree of order $\log n$ (i.e., $a,b=\Theta(\log n)$ grows with $n$).
In this paper, we focus on the constant degree regime ($a,b$ are absolute constants), where the weak recovery problem is relevant.

The phase transition for exact recovery is known \cite{kim2018stochastic,zhang2022exact} for more general HSBMs. For weak recovery, \cite{angelini2015spectral} conjectured that a phase transition occurs at the Kesten-Stigum threshold.
The positive (algorithm) part of their conjecture has been proved by \cite{pal2021community,stephan2022sparse} in vast generality, giving an efficient weak recovery algorithm above the Kesten-Stigum threshold.
Despite the progress on the positive part, to the best of our knowledge, there are no negative (impossibility) results for any $r\ge 3$ before the current work.

For the graph (SBM) case $r=2$, the positive part was proved by \cite{massoulie2014community,mossel2018proof} and the negative part was established by \cite{mossel2015reconstruction,mossel2018proof} via reduction to the broadcasting on trees (BOT) model.
Therefore a natural idea is to study the reconstruction problem for a suitable hypergraph generalization of the BOT model, which we call the broadcasting on hypertrees (BOHT) model.
\cite{zhang2022sparse} mentioned that the difficulty in proving negative results lies in analyzing the BOHT model.
In this paper we prove impossibility of weak recovery results by proving non-reconstruction results for BOHT.

Before describing the BOHT model, we define the following useful parameters for HSBM.
\begin{itemize}
  \item For every vertex $u$, the expected number of hyperedges containing $u$ is $d\pm o(1)$, where
  \begin{align}
    d = \frac{(a-b) + 2^{r-1} b}{2^{r-1}}. \label{eqn:hsbm-d}
  \end{align}
  \item Expected number of vertices adjacent to $u$ is $\alpha\pm o(1)$, where
  \begin{align}
    \alpha= (r-1)d= (r-1) \frac{(a-b) + 2^{r-1} b}{2^{r-1}}. \label{eqn:hsbm-alpha}
  \end{align}
  \item Expected number of neighbors in the same community minus the number of neighbors in the other community is $\beta\pm o(1)$ where
  \begin{align}
    \beta = (r-1) \frac{a-b}{2^{r-1}}. \label{eqn:hsbm-beta}
  \end{align}
  \item Strength of the broadcasting channel is characterized by $\lambda\in [0, 1]$, defined as
  \begin{align}
    \lambda = \frac \beta\alpha = \frac{a-b}{a-b+2^{r-1}b}. \label{eqn:hsbm-lambda}
  \end{align}
  \item Signal-to-noise ratio (SNR), which is conjectured to govern the algorithmic weak recovery threshold for HSBM:
  \begin{align}
  \SNR:= \alpha \lambda^2 = (r-1)d\lambda^2 = \frac{(r-1) (a-b)^2}{2^{r-1} ((a-b) + 2^{r-1} b)}. \label{eqn:hsbm-snr}
  \end{align}
\end{itemize}
The Kesten-Stigum (KS) threshold is at $\SNR=1$.

\paragraph{Broadcasting on hypertrees.}
We define a general broadcasting on hypertrees (BOHT) model.
Let $q\ge 2$ (alphabet size), $r\ge 2$ (hyperedge size) be integers.
Let $\pi\in \cP([q])$ be a distribution of full support (where $\cP([q])$ denotes the space of distributions on $[q]$).
Let $B: [q] \to [q]^{r-1}$ be a probability kernel (called the broadcasting channel), satisfying
\begin{align} \label{eqn:defn-boht-prob-kernel}
  \sum_{k\in [q]} \pi_k \sum_{\substack{x\in [q]^{r-1} \\ x_i = j}} B(x_1,\ldots,x_{r-1}|k) = \pi_j \quad  \forall i\in [r-1], j\in [q].
\end{align}
Let $T$ be a (possibly random) $r$-uniform linear\footnote{Linear means that the intersection of two distinct hyperedges has size at most one.} hypertree rooted at $\rho$. The model $\BOHT(T,q,r,\pi,B)$ generates a label $\sigma_u$ for every vertex $u\in T$ via a downward process:
(1) generate $\sigma_\rho \sim \pi$ (2) given $\sigma_u$, for every downward hyperedge $S=\{u,v_1,\ldots,v_{r-1}\}$, generate $\sigma_{v_1},\ldots,\sigma_{v_{r-1}}$ according to $B(\cdot | \sigma_u)$, i.e., for every $y,x_1,\ldots,x_{r-1}\in [q]$, we have
\begin{align}
  \bP[\sigma_{v_i}=x_i\forall i\in [r-1] | \sigma_u=y] = B(x_1,\ldots,x_{r-1}|y).
\end{align}
We often consider the case where $T$ is a Galton-Watson hypertree, meaning that every vertex independently has $t\sim D$ downward hyperedges, where $D$ is a distrbution on $\bZ_{\ge 0}$.
We denote the resulting model as $\BOHT(q,r,\pi,B,D)$.
An important case is $D=\Pois(d)$, the Poisson distribution with mean $d$.
When $D$ is a singleton at $d\in \bZ_{\ge 0}$ we also denote the model as $\BOHT(q,r,\pi,B,d)$.

For $\HSBM(n,2,r,a,b)$, the corresponding BOHT model has $q=2$, $\pi=\Unif(\{\pm\})$, and $B=B_{r,\lambda}$ ($\lambda\in [0,1]$ is given by \eqref{eqn:hsbm-lambda}) where
\begin{align} \label{eqn:boht-B-special}
  B_{r,\lambda}(x_1,\ldots,x_{r-1}|y)
  = \left\{\begin{array}{ll}
  \lambda+\frac 1{2^{r-1}}(1-\lambda), & \text{if}~x_i=y\forall i\in [r-1], \\
  \frac 1{2^{r-1}}(1-\lambda), & \text{otherwise.}
  \end{array}\right.
\end{align}
We denote this model as $\BOHT(2,r,\lambda,D)$ and call it the special BOHT model.

The reconstruction problem asks whether we can gain any non-trivial information about the root given observation of far away vertices. In other words, whether the limit
\begin{align}
  \lim_{k\to \infty} I(\sigma_\rho; T_k, \sigma_{L_k})
\end{align}
is non-zero, where $L_k$ is the set of vertices at distance $k$ to the root $\rho$, and $T_k$ is the set of vertices at distance $\le k$ to $\rho$.
When the limit is non-zero, we say reconstruction is possible for the BOHT model; when the limit is zero, we say reconstruction is impossible.
It is known \cite{pal2021community} that the $r$-neighborhood (for any constant $r$) of a random vertex converges (in the sense of local weak convergence) to the Poisson hypertree described above.
Therefore non-reconstruction on a Poisson hypertree implies impossibility of weak recovery for the corresponding HSBM.

For the case $r=2$, the reconstruction threshold for the symmetric BOT model was established by \cite{bleher1995purity,evans2000broadcasting}. People have also studied generalizations of the BOT model with larger alphabet or asymmetric broadcasting channel, e.g., \cite{mossel2001reconstruction,mossel2003information,mezard2006reconstruction,borgs2006kesten,bhatnagar2010reconstruction,sly2009reconstruction,sly2011reconstruction,kulske2009symmetric,liu2019large,gu2020non,mossel2022exact}.
Nevertheless, to our knowledge, there has been no previous work studying the reconstruction problem for BOHT.

\paragraph{Belief propagation.}
The BOT and BOHT models can be studied using the belief propagation operator.
Consider the model $\BOHT(q,r,\pi,B,D)$.
Let $M_k$ denote the channel $\sigma_\rho\mapsto (T_k, \sigma_{L_k})$.
Then $(M_k)_{k\ge 0}$ satisfies the following recursion, called the belief propagation recursion:
\begin{align}
  M_{k+1} = \bE_{t\sim D}\left(M_k^{\times (r-1)} \circ B\right)^{\star t},
\end{align}
where
$(\cdot)^{\times (r-1)}$ denotes tensorization power, and $(\cdot)^{\star t}$ denotes $\star$-convolution power (see Section~\ref{sec:prelim}).
Let $\BP$ be the operator
\begin{align} \label{eqn:bp-operator}
  \BP(P) := \bE_{t\sim D}\left(P^{\times (r-1)} \circ B\right)^{\star t}
\end{align}
defined on the space of channels with input alphabet $[q]$.
Then the reconstruction problem is equivalent to asking whether the limit $\BP^\infty(\Id) := \lim_{k\to \infty} \BP^k(\Id)$ is trivial, where $\Id$ stands for the identity channel $\Id(y|x)=\mathbbm{1}\{x=y\}$.

\paragraph{Strong data processing inequalities.}
A useful tool for studying BOT models is the strong data processing inequalities (SDPIs).
They are quantitative versions of the data processing inequality (DPI), the most fundamental inequality in information theory.
The input-restricted version of SDPI states that for any Markov chain $U-X-Y$, we have
\begin{align} \label{eqn:defn:sdpi-input-res}
  I(U; Y) \le \eta_{\KL}(P_X, P_{Y|X}) I(U; X)
\end{align}
where $\eta_{\KL}(P_X, P_{Y|X})$ is a constant (called the contraction coefficient) depending only on $P_X$ and $P_{Y|X}$.
We always have $\eta_{\KL}(P_X, P_{Y|X}) \le 1$ by DPI, and the inequality is usually strict.
For any $f$-divergence, there is a corresponding version of SDPI, by replacing $I$ with $I_f$ and $\eta_{\KL}$ with another constant $\eta_f$ in \eqref{eqn:defn:sdpi-input-res}.

To apply SDPI to reconstruction problems on trees, the following equivalent form is more useful: for any Markov chain $Y-X-U$, we have
\begin{align} \label{eqn:defn:post-sdpi-input-res}
  I(U; Y) \le \eta^{(p)}_{\KL}(P_Y, P_{X|Y}) I(U; X),
\end{align}
where $\eta^{(p)}_{\KL}(P_Y, P_{X|Y})$ is a constant depending only on $P_Y$ and $P_{X|Y}$.
\eqref{eqn:defn:post-sdpi-input-res} is called the post-SDPI in~\cite{polyanskiy2023information}. Comparing \eqref{eqn:defn:sdpi-input-res} and \eqref{eqn:defn:post-sdpi-input-res} we see that $\eta_{\KL}(P_X, P_{Y|X}) = \eta^{(p)}_{\KL}(P_Y, P_{X|Y})$.

Now consider a BOT model $\BOHT(q,2,\pi,B,d)$. Note that in this case $B$ is a Markov kernel from $[q]$ to $[q]$ and $\pi B = \pi$.
Then the post-SDPI says that for any channel $P$ with input alphabet $[q]$, we have
\begin{align}
  I(\pi, P\circ B) \le \eta^{(p)}_{\KL}(\pi, B) I(\pi, P),
\end{align}
where $I(\pi, P)$ denotes the mutual information $I(X; Y)$ between two variables where $P_X=\pi$ and $P_{Y|X}=P$.
By subadditivity of mutual information under $\star$-convolution, we have
\begin{align}
  I(\pi, \BP(P)) \le d I(\pi, P\circ B) \le d \eta^{(p)}_{\KL}(\pi, B) I(\pi, P).
\end{align}
When $d \eta^{(p)}_{\KL}(\pi, B) < 1$, we have $\lim_{k\to \infty} I(\pi, \BP^k(P)) = 0$ and reconstruction is impossible.

This argument first appeared in \cite{kulske2009symmetric,formentin2009purity} with SKL information, and \cite{gu2020broadcasting} used it with mutual information to give currently best known non-reconstruction results for the Potts model in some parameter regimes. 
Although we introduced the method with $\BOHT(q,2,\pi,B,d)$ model, with slight modification it works for $\BOHT(q,2,\pi,B,D)$ or $\BOHT(T,q,2,\pi,B)$.

\paragraph{Multi-terminal SDPI.}
We generalize the above method to BOHT models with $r\ge 3$.
To this end, we introduce a multi-terminal version of the post-SDPI.
Let $\pi\in \cP([q])$ be a distribution and $B: [q]\to [q]^{r-1}$ be a probability kernel satisfying \eqref{eqn:defn-boht-prob-kernel}.
We define the multi-terminal contraction coefficient $\eta^{(m)}_{\KL}(\pi, B)$ (where $m$ stands for ``multi'') as the smallest constant such that for any channel $P$ with input alphabet $[q]$, we have
\begin{align} \label{eqn:defn:multi-sdpi}
  I(\pi, P^{\times (r-1)} \circ B) \le (r-1)\eta^{(m)}_{\KL}(\pi, B) I(\pi, P).
\end{align}
(See Figure~\ref{fig:homo-multi-sdpi} for an illustration.)
Then with a similar argument as the $r=2$ case, we can prove non-reconstruction for BOHT whenever $(r-1) d \eta^{(m,s)}_{\KL}(B) < 1$.

In the single-terminal setting, we usually distinguish pre-SDPI and post-SDPI.
In our multi-terminal setting, $B$ has one input and multiple outputs, so a multi-terminal version of post-SDPI makes more sense than that of pre-SDPI.
Therefore we call Eq.~\ref{eqn:defn:multi-sdpi} multi-terminal SDPI rather than multi-terminal post-SDPI.

For a BOHT model, if $q=2$, $\pi=\Unif(\{\pm\})$, and $B: \{\pm\}\to \{\pm\}^{r-1}$ together with the sign flip $\{\pm\}^{r-1}\to \{\pm\}^{r-1}$ is a BMS channel (see Section~\ref{sec:prelim}), then we say the model is binary symmetric.
For such models, the BP operator sends BMS channels to BMS channels.
So we could restrict $P$ to be a BMS channel and define $\eta^{(m,s)}_{\KL}(B)$ (where $s$ stands for ``symmetric'') to be the smallest constant such that \eqref{eqn:defn:multi-sdpi} holds for all BMS channels $P$.
By definition $\eta^{(m,s)}_{\KL}(B) \le \eta^{(m)}_{\KL}(\pi, B)$, so it might be able to give better non-reconstruction results than the non-BMS version.
Furthermore, due to a large number of tools dealing with BMS channels, $\eta^{(m,s)}_{\KL}(B)$ is often easier to compute than $\eta^{(m)}_{\KL}(\pi, B)$.

We could replace KL divergence in the above discussion by other $f$-divergences, and define the corresponding multi-terminal contraction coefficients. See Section~\ref{sec:multi-sdpi} for more discussions.


\paragraph{Our results.}
Our first result is non-reconstruction for BOHT based on multi-terminal SDPIs.
\begin{theorem}[Non-reconstruction for BOHT] \label{thm:boht-non-recon}
  Consider the model $\BOHT(q,r,\pi,B,D)$ where $\bE_{t\sim D} t=d$.
  \begin{enumerate}[label=(\roman*)]
    \item \label{item:thm-boht-non-recon:general}
    If
    \begin{align}
      \label{eqn:thm-boht-non-recon:general-kl} &(r-1) d \eta^{(m)}_{\KL}(\pi, B) < 1,\\
      \label{eqn:thm-boht-non-recon:general-skl} \text{or}\quad & (r-1) d \eta^{(m)}_{\SKL}(\pi, B)< 1~\text{and}~I_{\SKL}(\pi, B) < \infty,
    \end{align}
    then reconstruction is impossible.
    \item \label{item:thm-boht-non-recon:bms}
    Suppose the BOHT model is binary symmetric. If
    \begin{align}
      \label{eqn:thm-boht-non-recon:bms-kl} & (r-1) d \eta^{(m,s)}_{\KL}(B) < 1,\\
      \label{eqn:thm-boht-non-recon:bms-chi2} \text{or}\quad & (r-1) d \eta^{(m,s)}_{\chi^2}(B)< 1,\\
      \label{eqn:thm-boht-non-recon:bms-skl} \text{or}\quad & (r-1) d \eta^{(m,s)}_{\SKL}(B)< 1~\text{and}~C_{\SKL}(B) < \infty,
    \end{align}
    then reconstruction is impossible.
  \end{enumerate}
\end{theorem}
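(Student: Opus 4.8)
The plan is to iterate the Belief Propagation recursion $M_{k+1}=\BP(M_k)$ started from $M_0=\Id$ and to show that a suitable information functional $I_f(\pi,M_k)$ decays geometrically to $0$. Since $M_k$ is the channel $\sigma_\rho\mapsto(T_k,\sigma_{L_k})$, we have $I(\sigma_\rho;T_k,\sigma_{L_k})=I(\pi,M_k)$, and by the standard pointwise comparisons among $f$-divergences, $I(\pi,M_k)\le I_{\SKL}(\pi,M_k)$ and $I(\pi,M_k)\le I_{\chi^2}(\pi,M_k)$; hence it is enough to drive any one of these quantities to $0$.

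The key step is a one-step contraction estimate. Using $\BP(P)=\bE_{t\sim D}(P^{\times(r-1)}\circ B)^{\star t}$, I would peel the operator apart in three moves, valid for each $f\in\{\KL,\SKL,\chi^2\}$: (i) because the offspring count $t$ is part of the observation and is independent of $\sigma_\rho$, $I_f(\pi,\bE_{t\sim D}Q^{\star t})=\bE_{t\sim D}I_f(\pi,Q^{\star t})$; (ii) subadditivity of $I_f$ under $\star$-convolution (Section~\ref{sec:prelim}) gives $I_f(\pi,Q^{\star t})\le t\,I_f(\pi,Q)$, and $\bE_{t\sim D}t=d$; (iii) the multi-terminal SDPI~\eqref{eqn:defn:multi-sdpi} applied to $Q=P^{\times(r-1)}\circ B$ gives $I_f(\pi,P^{\times(r-1)}\circ B)\le(r-1)\,\eta^{(m)}_f(\pi,B)\,I_f(\pi,P)$. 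Chaining these yields
\[
I_f(\pi,M_{k+1})\le(r-1)\,d\,\eta^{(m)}_f(\pi,B)\,I_f(\pi,M_k),
\]
and likewise with $\eta^{(m)}_f$ replaced by the smaller $\eta^{(m,s)}_f$ once we know all $M_k$ are BMS (which holds in part~(ii), since $\Id$ is BMS and $\BP$ preserves BMS channels). When the prefactor is $<1$ and the initial value $I_f(\pi,M_0)$ is finite, iterating forces $I_f(\pi,M_k)\to0$, hence $I(\pi,M_k)\to0$, which is exactly non-reconstruction.

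It remains to check finiteness of the initial value in each case. For $f=\KL$ we have $I(\pi,M_0)=H(\pi)<\infty$, so \eqref{eqn:thm-boht-non-recon:general-kl} (resp.\ \eqref{eqn:thm-boht-non-recon:bms-kl}) immediately gives the conclusion. For $f=\chi^2$, $I_{\chi^2}(\pi,\Id)=q-1<\infty$, so \eqref{eqn:thm-boht-non-recon:bms-chi2} suffices with no side condition. For $f=\SKL$ the subtlety is that $I_{\SKL}(\pi,\Id)=\infty$ (the reverse-KL term is infinite on the diagonal), so the recursion cannot be started at $k=0$; this is precisely why the finiteness hypotheses $I_{\SKL}(\pi,B)<\infty$ (resp.\ $C_{\SKL}(B)<\infty$) are imposed. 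Since $\Id^{\times(r-1)}\circ B=B$, we have $M_1=\bE_{t\sim D}B^{\star t}$, hence $I_{\SKL}(\pi,M_1)\le d\,I_{\SKL}(\pi,B)<\infty$, and running the contraction from $k=1$ onward under \eqref{eqn:thm-boht-non-recon:general-skl} (resp.\ \eqref{eqn:thm-boht-non-recon:bms-skl}) gives the result.

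The main obstacle here is bookkeeping rather than conceptual. One has to state and verify subadditivity of $I_f$ under $\star$-convolution and under the $D$-mixture for each of the three divergences — straightforward for $\KL$ and $\SKL$ (reverse-KL information is in fact exactly additive under $\star$-convolution) and supplied for $\chi^2$ in Section~\ref{sec:prelim} — and one must be careful with the divergent initial value in the $\SKL$ case. The multi-terminal coefficients themselves need no proof at this stage, being defined as the optimal constants in \eqref{eqn:defn:multi-sdpi}; all the real work, namely bounding $\eta^{(m)}_{\KL}(\pi,B)$, $\eta^{(m,s)}_{\chi^2}(B)$ and $\eta^{(m,s)}_{\SKL}(B)$ for the broadcast channels $B_{r,\lambda}$ of interest, lies downstream of this theorem.
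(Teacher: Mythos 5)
Your proposal is correct and follows essentially the same route as the paper: decompose $\BP$ via the observed offspring count, subadditivity under $\star$-convolution, and the multi-terminal SDPI to get the one-step contraction $I_f(\pi,M_{k+1})\le (r-1)d\,\eta^{(m)}_f(\pi,B)\,I_f(\pi,M_k)$, then iterate, handling the SKL case by starting from $M_1$ (where the hypothesis $I_{\SKL}(\pi,B)<\infty$, resp.\ $C_{\SKL}(B)<\infty$, gives a finite starting value) and using the BMS-restricted coefficients and $\chi^2$-subadditivity in part (ii). Your added care about comparing $I$ with $I_{\SKL}$ and $I_{\chi^2}$ to conclude non-reconstruction, and the explicit mixture identity over the observed $t$, are correct fillings of steps the paper leaves implicit.
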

Our method can be modified to give non-reconstruction results for the $\BOHT(T,q,r,\pi,B)$ model. See Section~\ref{sec:boht-non-recon-fixed}.

We apply Theorem~\ref{thm:boht-non-recon} to the special case where $\pi=\Unif(\{\pm\})$ and $B=B_{r,\lambda}$.
We compute $\eta^{(m,s)}_f(B_{r,\lambda})$ for several cases and prove the following result.
\begin{theorem}[Non-reconstruction for special BOHT] \label{thm:boht-non-recon-special}
  Consider the special BOHT model $\BOHT(2,r,\lambda,D)$ where $\bE_{t\sim D} t=d$.
  \begin{enumerate}[label=(\roman*)]
    \item \label{item:thm-boht-non-recon-special-r34}
    For $r=3,4$, if $(r-1)d\lambda^2\le 1$, then reconstruction is impossible.
    \item \label{item:thm-boht-non-recon-special-any-d}
    For any $r\ge 5$, if
    \begin{align}
      \label{eqn:thm-boht-non-recon-special:chi2} &(r-1) d \sup_{0<\epsilon\le 1} f_{r,\lambda}(\epsilon) < 1,\\
      \label{eqn:thm-boht-non-recon-special:chi2-f} \text{where}\quad& f_{r,\lambda}(\epsilon) := \frac 1{r-1} \sum_{1\le i\le r-1} \binom{r-1}i (1-\epsilon)^{r-1-i} \epsilon^{i-1} \frac{\lambda^2}{\lambda+(1-\lambda)2^{1-i}},
    \end{align}
    then reconstruction is impossible.
    \item \label{item:thm-boht-non-recon-special-small-d}
    For any $r\ge 5$, if $\lambda\ge \frac 15$ and $(r-1)d\lambda^2\le 1$, then reconstruction is impossible.
  \end{enumerate}
\end{theorem}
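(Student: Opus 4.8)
The plan is to bound the multi-terminal contraction coefficients of the channel $B_{r,\lambda}$ from~\eqref{eqn:boht-B-special} and feed them into Theorem~\ref{thm:boht-non-recon}\ref{item:thm-boht-non-recon:bms}, which applies because the special BOHT model is binary symmetric. For part~\ref{item:thm-boht-non-recon-special-any-d} I will use the $\chi^2$ criterion~\eqref{eqn:thm-boht-non-recon:bms-chi2} directly; for parts~\ref{item:thm-boht-non-recon-special-r34} and~\ref{item:thm-boht-non-recon-special-small-d}, whose conclusions sit exactly at the Kesten--Stigum threshold, I need the sharper statement that the relevant coefficient equals exactly $\lambda^2$. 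The non-strict inequality ``$\le 1$'' appearing in those two parts (versus the strict ``$<1$'' of Theorem~\ref{thm:boht-non-recon}) will be handled at the end by a fixed-point argument.

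\emph{Main step: the $\chi^2$ coefficient.} The crux is the identity $\eta^{(m,s)}_{\chi^2}(B_{r,\lambda}) = \sup_{0<\epsilon\le 1} f_{r,\lambda}(\epsilon)$, i.e.\ that among all BMS input channels $P$ the ratio $\frac{I_{\chi^2}(\pi,\,P^{\times(r-1)}\circ B_{r,\lambda})}{(r-1)\,I_{\chi^2}(\pi,P)}$ is extremized by binary erasure channels. The inequality ``$\ge$'' follows by taking $P=\mathrm{BEC}$ with $I_{\chi^2}(\pi,P)=\epsilon$: a direct computation, conditioning on the number $i$ of children whose bit is received, produces precisely the weights $\binom{r-1}{i}\epsilon^{i-1}(1-\epsilon)^{r-1-i}$ and the factors $\lambda^2/(\lambda+(1-\lambda)2^{1-i})$ appearing in~\eqref{eqn:thm-boht-non-recon-special:chi2-f}. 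For ``$\le$'' I would parametrize a BMS channel by the law $\nu$ on $[0,1]$ of $\Theta^2$, where $\Theta=\tanh(L/2)$ and $L=\log\frac{dP(\cdot|+)}{dP(\cdot|-)}$ under the uniform input, so that $I_{\chi^2}(\pi,P)=\int x\,d\nu$. Using that $B_{r,\lambda}$ is the mixture ``with probability $\lambda$, copy the root to all $r-1$ children exactly; with probability $1-\lambda$, output $r-1$ independent uniform bits'', one writes $I_{\chi^2}(\pi,P^{\times(r-1)}\circ B_{r,\lambda})$ in terms of the even moments of $\nu$, and after simplification it becomes a constant (depending on $r,\lambda$) minus a positive-semidefinite quadratic form in $\nu$: for $r=3$ it equals $\lambda\bigl(1-\sum_{k\ge 0}\bigl(\int u^{k+1}(1-u)\,d\tilde\nu\bigr)^2\bigr)$ with $\tilde\nu$ the law of $\lambda\Theta^2$. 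One then checks that the two-atom law $\nu=(1-\epsilon)\delta_0+\epsilon\delta_1$ --- which is exactly $\mathrm{BEC}$ --- satisfies the Karush--Kuhn--Tucker conditions for maximizing this functional subject to $\int x\,d\nu=\epsilon$; the only substantive point is the off-support inequality, which for $r=3$ reduces to the elementary bound $\frac{1-u}{1-\lambda u}\ge\frac{1}{1+\lambda}$ on $[0,\lambda]$. Convexity of the functional then promotes the KKT conditions to global optimality. Performing this simplification and verifying the off-support inequality \emph{uniformly in $r$} is, I expect, the main obstacle.

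\emph{Deducing the three parts.} Granting the above, part~\ref{item:thm-boht-non-recon-special-any-d} is immediate, its hypothesis being exactly $(r-1)d\,\eta^{(m,s)}_{\chi^2}(B_{r,\lambda})<1$. For part~\ref{item:thm-boht-non-recon-special-small-d} I would show $f_{r,\lambda}(\epsilon)\le\lambda^2$ for all $r\ge 5$ and $\epsilon\in(0,1]$ whenever $\lambda\ge\tfrac15$: writing $c_i:=1/(\lambda+(1-\lambda)2^{1-i})$ (so $c_1=1$) and $B\sim\mathrm{Bin}(r-1,\epsilon)$, a binomial rearrangement gives $f_{r,\lambda}(\epsilon)/\lambda^2 = \bE[c_B\mathbbm{1}\{B\ge 1\}]/\bE[B]$, so it suffices that $c_i\le i$ for every $i\ge 1$; for $i\ge 5$ one has $i\lambda\ge 1$, which forces $c_i\le i$ outright, and for $i\in\{2,3,4\}$ the inequality amounts to $\lambda\ge 0,\tfrac19,\tfrac17$ respectively, all implied by $\lambda\ge\tfrac15$ --- indeed $\tfrac15$ is precisely the threshold making $i\lambda\ge1$ for all $i\ge5$. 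Hence $\eta^{(m,s)}_{\chi^2}(B_{r,\lambda})\le\lambda^2$ and~\eqref{eqn:thm-boht-non-recon:bms-chi2} gives non-reconstruction for $(r-1)d\lambda^2<1$. For part~\ref{item:thm-boht-non-recon-special-r34} with $r=3$ one has $f_{3,\lambda}(\epsilon)=\lambda^2\bigl(1-\tfrac{\lambda\epsilon}{1+\lambda}\bigr)\le\lambda^2$, so the same works. For $r=4$, however, one computes $\eta^{(m,s)}_{\chi^2}(B_{4,\lambda})=\max\bigl(\lambda^2,\tfrac{4\lambda^2}{3(1+3\lambda)}\bigr)$, which strictly exceeds $\lambda^2$ for $\lambda<\tfrac19$; so for $r=4$ (uniformly in $\lambda$) I would instead establish that a finer divergence has contraction coefficient exactly $\lambda^2$, namely $\eta^{(m,s)}_{\mathrm{KL}}(B_{r,\lambda})=\lambda^2$ (equivalently for our purposes the SKL version) for $r=3,4$, and invoke~\eqref{eqn:thm-boht-non-recon:bms-kl}. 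Here the extremal input channel turns out to be infinitesimally weak, so the coefficient coincides with the linearized Kesten--Stigum value; proving this exact equality for $r=3,4$ (and not beyond) is a second, computational, obstacle, again relying on $r$ being small.

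\emph{The boundary case.} Suppose $(r-1)d\lambda^2=1$ and that we have shown $\eta^{(m,s)}_{f}(B_{r,\lambda})=\lambda^2$ for the relevant $f\in\{\chi^2,\mathrm{KL}\}$. Then by Theorem~\ref{thm:boht-non-recon}\ref{item:thm-boht-non-recon:bms} together with subadditivity, $I_f(\pi,\BP^k(\Id))$ is non-increasing and non-negative, hence converges to some $c_\infty\ge 0$. If $c_\infty>0$, then $P^\ast:=\BP^\infty(\Id)$ is a fixed point of $\BP$ (by continuity of $\BP$) with $I_f(\pi,P^\ast)=c_\infty>0$, so $P^\ast$ is not infinitesimally weak; but the supremum defining $\eta^{(m,s)}_f(B_{r,\lambda})$ is approached only by weak channels --- visible in the $\chi^2$ case from $f_{r,\lambda}(\epsilon)<\lambda^2$ for $\epsilon>0$ --- so the contraction is strict for $P^\ast$: $I_f(\pi,\BP(P^\ast))<(r-1)d\lambda^2\,I_f(\pi,P^\ast)=I_f(\pi,P^\ast)$, contradicting $\BP(P^\ast)=P^\ast$. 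Hence $c_\infty=0$ and reconstruction is impossible.
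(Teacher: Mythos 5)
Your architecture matches the paper's (a $\chi^2$ multi-terminal coefficient with BEC as the extremal input for parts~\ref{item:thm-boht-non-recon-special-any-d} and \ref{item:thm-boht-non-recon-special-small-d}, a finer divergence for $r=3,4$, and a fixed-point argument at criticality), and several pieces are correct and even nicer than the paper's: the BEC computation producing $f_{r,\lambda}$, the closed form for $r=3$, the observation that $\chi^2$ fails for $r=4$ at small $\lambda$, and the binomial rearrangement $f_{r,\lambda}(\epsilon)/\lambda^2=\bE[c_B\mathbbm{1}\{B\ge1\}]/\bE[B]$ with $c_i\le i$, which is a cleaner route to Lemma~\ref{lem:chi2-multi-sdpi-large-lambda} than the paper's derivative computation (you should additionally note $c_i<i$ for $i\ge 2$ to get the strict inequality you later invoke at criticality). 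However, the two load-bearing claims are left unproven. First, the upper bound $\eta^{(m,s)}_{\chi^2}(B_{r,\lambda})\le\sup_{0<\epsilon\le1}f_{r,\lambda}(\epsilon)$, i.e.\ extremality of BEC among all BMS inputs, is only sketched via a moment/KKT program that you yourself flag as ``the main obstacle''; concavity of the functional and the off-support condition are not verified beyond a partial $r=3$ computation, and nothing is said uniformly in $r$. The paper obtains this step cleanly from the less-noisy machinery: $P\le_{\ln}\BEC_{1-\epsilon}$ whenever $C_{\chi^2}(P)=\epsilon$ (Lemma~\ref{lem:bms-extremal-less-noisy}), tensorization and composition of $\le_{\ln}$, and Makur's comparison theorem, giving Prop.~\ref{prop:chi2-multi-sdpi-bms}. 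Without some argument of this kind, parts~\ref{item:thm-boht-non-recon-special-any-d} and \ref{item:thm-boht-non-recon-special-small-d} are unsupported.

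Second, for part~\ref{item:thm-boht-non-recon-special-r34} with $r=4$ your primary plan --- establish $\eta^{(m,s)}_{\KL}(B_{r,\lambda})=\lambda^2$ --- conflicts with the paper, which states that KL contraction does \emph{not} give the tight threshold for any $r\ge3$; the parenthetical ``equivalently \dots the SKL version'' is not an equivalence, since the whole point of the paper's discussion is that SKL succeeds precisely where KL and $\chi^2$ fail, reversing the single-terminal ordering of these coefficients. The correct statement is $\eta^{(m,ht,s)}_{\SKL}(B_{r,\lambda})=\lambda^2$ for $r=3,4$ (Prop.~\ref{prop:skl-multi-sdpi}), and proving it is the paper's main technical work: an arctanh inequality for $r=3$ (Lemma~\ref{lem:boht-tech-r3}) and, for $r=4$, the nontrivial inequality of Lemma~\ref{lem:boht-r4}, which the authors originally posed as a conjecture; labelling this ``a second, computational, obstacle'' is not a proof, and your heuristic that the extremizer is infinitesimally weak is exactly what fails for KL. Finally, the boundary case needs more care than ``continuity of $\BP$'': the paper constructs the limit channel via weak convergence of $\Delta$-components along a degradation-monotone sequence (Lemma~\ref{lem:bms-down-seq-limit}), must rule out an atom at $\Delta=0$ in the SKL case (where the capacity can be infinite), and replaces strict contraction at $M_\infty$ by a uniform decrement over a compact interval of $\Delta$-values for large $k$; in the $\chi^2$ case it again routes through less-noisy comparison at the fixed point. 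These last issues are fixable, but as written the critical case also rests on unverified continuity claims.
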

Note that Theorem~\ref{thm:boht-non-recon-special}\ref{item:thm-boht-non-recon-special-small-d} is non-trivial even for $(r-1)\lambda^2 > 1$, because $\BOHT(2,r,\lambda,D)$ allows non-integer $d$ and the hypertree is infinite with positive probability whenever $(r-1)d > 1$.
The constant $\frac 15$ in Theorem~\ref{thm:boht-non-recon-special}\ref{item:thm-boht-non-recon-special-small-d} can be improved for any fixed $r$. For example, for $r=5$, the constant can be improved to $\frac 17$.

By a standard reduction, our non-reconstruction results for BOHT implies impossibility of weak recovery for the corresponding HSBM.
\begin{theorem}[Impossibility of weak recovery for HSBM] \label{thm:hsbm-special}
  Consider the model $\HSBM(n,2,r,a,b)$. Let $\BOHT(2,r,\lambda,\Pois(d))$ be the corresponding BOHT model.
  If any of the conditions in Theorem~\ref{thm:boht-non-recon-special}\ref{item:thm-boht-non-recon-special-r34}\ref{item:thm-boht-non-recon-special-any-d}\ref{item:thm-boht-non-recon-special-small-d} holds, then weak recovery is impossible.
\end{theorem}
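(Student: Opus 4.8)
\emph{Proof plan.} The approach is to run the standard ``non-reconstruction implies non-detection'' reduction (cf.\ \cite{mossel2015reconstruction,mossel2018proof}), now in the hypergraph setting, feeding in Theorem~\ref{thm:boht-non-recon-special} as the tree input and the local weak convergence of \cite{pal2021community} as the bridge between graph and tree. Write $G\sim\HSBM(n,2,r,a,b)$ with planted labels $X\in\{\pm\}^V$; for a vertex $u$ let $B_k(u)$ be the induced sub-hypergraph on vertices at distance $\le k$ from $u$ and $L_k(u)=B_k(u)\setminus B_{k-1}(u)$ the depth-$k$ sphere, and let $(T,\sigma)\sim\BOHT(2,r,\lambda,\Pois(d))$ with $T_k,L_k,\sigma_{L_k}$ the analogous tree objects. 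Since the model is invariant under a global sign flip, $\bE[X_u\mid G]\equiv0$, so the statistic to track is the two-point function $Q_n:=\bE_{u,v}\big[\bE[X_uX_v\mid G]^2\big]$ for i.i.d.\ uniform $u,v$. The first step is to show that \emph{weak recovery forces $\liminf_n Q_n>0$}: if $\wh X=\wh X(G)\in\{\pm\}^V$ and $c<\tfrac12$ witness weak recovery, then $\sum_u\mathbbm{1}\{\wh X_u=sX_u\}=\tfrac12(n+s\langle\wh X,X\rangle)$ turns the guarantee into $|\langle\wh X,X\rangle|\ge(\gamma-o(1))n$ w.h.p.\ with $\gamma=1-2c>0$, hence $\bE[\langle\wh X,X\rangle^2]\ge\tfrac{\gamma^2}{2}n^2$ for $n$ large; as $\wh X$ is $G$-measurable, $\bE[\langle\wh X,X\rangle^2\mid G]=\wh X^{\!\top}M\wh X\le n\|M\|_{\mathrm{op}}\le n\|M\|_F$ with $M=\bE[XX^{\!\top}\mid G]\succeq0$, and taking $\bE_G$ and using Jensen gives $Q_n\ge\gamma^4/4$. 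So it suffices to prove $Q_n\to0$.

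To prove $Q_n\to0$, fix $k$ and localize. For i.i.d.\ uniform $u,v$ let $\cG_k$ be the event that $B_k(u)$ is a linear hypertree and $v\notin B_k(u)$; by vertex-transitivity and finiteness of the expected neighborhood size, $\bP[\cG_k^c]=o_n(1)$ for fixed $k$. On $\cG_k$ the posterior of $X$ given $G$ is a Markov random field with one factor per potential hyperedge, and since a hyperedge meeting $B_{k-1}(u)$ lies entirely inside $B_k(u)$ and in a hypertree no hyperedge skips a level, the sphere $L_k(u)$ separates $u$ from $V\setminus B_k(u)$ in the factor graph; hence $X_u,X_v$ are conditionally independent given $(G,X_{L_k(u)})$ and $\bE[X_u\mid G,X_{L_k(u)}]=\bE[X_u\mid B_k(u),X_{L_k(u)}]=:\wh X^{(k)}_u$ is a \emph{local} functional. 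Thus on $\cG_k$, $\bE[X_uX_v\mid G]=\bE\big[\wh X^{(k)}_u\,\bE[X_v\mid G,X_{L_k(u)}]\mid G\big]$, and using $|\bE[X_v\mid\cdot]|\le1$ and Jensen, $\bE[X_uX_v\mid G]^2\le\bE\big[(\wh X^{(k)}_u)^2\mid G\big]$; averaging over $u,v,G$ and bounding by $1$ off $\cG_k$,
\[
  Q_n\ \le\ \bE\Big[\big(\bE[X_\rho\mid B_k(\rho),X_{L_k(\rho)}]\big)^2\Big]+o_n(1),
\]
with $\rho$ uniform in $V$. The integrand is bounded by $1$ and depends only on the finite labeled rooted neighborhood $(B_k(\rho),X_{L_k(\rho)})$, so by the local weak convergence of $\HSBM$ neighborhoods (with their labels) to the Poisson hypertree \cite{pal2021community} — and the fact that on a tree the conditional law of labels given the hypergraph is exactly the $\BOHT(2,r,\lambda,\Pois(d))$ broadcast — the right side converges to $\bE\big[(\bE[\sigma_\rho\mid T_k,\sigma_{L_k}])^2\big]$ as $n\to\infty$. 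Hence $\limsup_n Q_n\le\bE\big[(\bE[\sigma_\rho\mid T_k,\sigma_{L_k}])^2\big]$ for every $k$.

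Finally, under the hypotheses of Theorem~\ref{thm:hsbm-special}, Theorem~\ref{thm:boht-non-recon-special}\ref{item:thm-boht-non-recon-special-r34}\ref{item:thm-boht-non-recon-special-any-d}\ref{item:thm-boht-non-recon-special-small-d} yields non-reconstruction for $\BOHT(2,r,\lambda,\Pois(d))$, i.e.\ $I(\sigma_\rho;T_k,\sigma_{L_k})\to0$; for a uniform binary root this is equivalent up to universal constants to $\bE\big[(\bE[\sigma_\rho\mid T_k,\sigma_{L_k}])^2\big]\to0$ (for instance $\bE[\wh\sigma_\rho^{\,2}]\le2\,I(\sigma_\rho;T_k,\sigma_{L_k})$ via $\log2-H(q)\ge\tfrac12(2q-1)^2$). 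Sending $k\to\infty$ in the previous display forces $\limsup_n Q_n=0$, contradicting $Q_n\ge\gamma^4/4$; hence weak recovery is impossible.

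I expect the main obstacle to be the middle step and its two ingredients: (i) the posterior Markov-random-field/cutset argument, where the hypergraph structure needs care — a size-$r$ hyperedge of a hypertree may straddle two consecutive levels but never jumps one, which is exactly what makes conditioning on the label sphere $X_{L_k(\rho)}$ localize $X_\rho$; and (ii) upgrading the fixed-radius local weak convergence of \cite{pal2021community} to depth-$k$ neighborhoods \emph{together with their boundary labels}, and checking that the mild size-biasing from conditioning on ``$B_k(\rho)$ is exactly the depth-$k$ ball'' is asymptotically harmless. The remainder is the usual SBM-to-BOT dictionary.
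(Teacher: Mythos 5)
The paper itself proves this theorem in one line, by citing the general reduction (Theorem~\ref{thm:hsbm-reduction}, quoted from an external reference and resting on Prop.~\ref{prop:hsbm-boht-coupling} and Prop.~\ref{prop:hsbm-approx-cond-indep}) together with Theorem~\ref{thm:boht-non-recon-special}. Your plan re-derives that reduction along the standard Mossel--Neeman--Sly lines, so the architecture matches what the cited theorem encapsulates; your first step (weak recovery forces $\liminf_n Q_n>0$ via $\wh X^{\top}M\wh X\le n\|M\|_F$ and Jensen) and the final Pinsker-type step relating $\bE[\wh\sigma_\rho^2]$ to $I(\sigma_\rho;T_k,\sigma_{L_k})$ are correct.

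The genuine gap is the middle ``MRF/cutset'' step. If the posterior $\bP[X\mid G]$ is written with one factor per \emph{potential} hyperedge, then absent hyperedges also contribute factors (proportional to $1-a/\binom{n}{r-1}$ or $1-b/\binom{n}{r-1}$ according to the labels), so the factor graph is the complete $r$-uniform hypergraph on $V$ and the sphere $L_k(u)$ separates nothing; if instead you keep only observed hyperedges, the posterior is not a Markov random field for that hypergraph. Consequently the exact claims you use in the body of the argument --- that $X_u$ and $X_v$ are conditionally independent given $(G,X_{L_k(u)})$, and that $\bE[X_u\mid G,X_{L_k(u)}]=\bE[X_u\mid B_k(u),X_{L_k(u)}]$ --- are false as stated; only approximate versions hold, and establishing them is precisely the ``no long-range correlations'' estimate (Prop.~\ref{prop:hsbm-approx-cond-indep}, which requires the separating sets to have size $o(\sqrt n)$ and is a nontrivial second-moment-type argument), i.e.\ the substantive content of the reduction that the paper defers to its reference. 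You flag this as an ``obstacle,'' but your proof relies on the exact statements, so as written it does not go through: you must import that lemma and propagate the resulting $(1\pm o(1))$ multiplicative errors through your chain of (in)equalities. Your second flagged ingredient is less of a problem: the coupling of depth-$k$ neighborhoods \emph{with labels} to the Poisson-hypertree broadcast is exactly Prop.~\ref{prop:hsbm-boht-coupling} (TV distance $o(1)$ for $k$ up to $c\log n$), so it is citable; you should, however, also justify that the bounded functional $\bE\bigl[(\bE[X_\rho\mid B_k(\rho),X_{L_k(\rho)}])^2\bigr]$ transfers under a TV coupling, since conditional expectations are not automatically TV-stable on low-probability neighborhoods and a short argument using boundedness of the statistic is needed. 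The remainder of your outline is the usual dictionary and is fine.
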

In fact, the reduction holds for more general HSBMs. See Section~\ref{sec:hsbm}.

It is known that for any BOHT model above the Kesten-Stigum threshold, reconstruction is possible. By Theorem~\ref{thm:boht-non-recon-special}, for the special BOHT model, the KS threshold is tight for $r=3,4$ or $\lambda\ge \frac 15$.
Surprisingly, we show that for $r\ge 7$ and large $d$, reconstruction is possible below the KS threshold.
\begin{theorem}[Reconstruction for BOHT with $r\ge 7$ and large $d$] \label{thm:boht-recon-large-d}
  Consider the model $\BOHT(2,r,\lambda,d)$ or $\BOHT(2,r,\lambda,\Pois(d))$.
  For $r\ge 7$, there exists a constant $d_0=d_0(r)$ such that for all $d\ge d_0$, there exists $\lambda\in[0,1]$ such that $(r-1)d\lambda^2<1$ and reconstruction is possible.
\end{theorem}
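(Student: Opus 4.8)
The plan is to lower-bound $\lim_{k\to\infty}I(\sigma_\rho;T_k,\sigma_{L_k})$ away from zero by exhibiting an explicit estimator $\wh\sigma_\rho=\wh\sigma_\rho(T_k,\sigma_{L_k})$ whose accuracy $\bP[\wh\sigma_\rho=\sigma_\rho]$ stays bounded away from $\tfrac12$ uniformly in $k$; then $I(\sigma_\rho;T_k,\sigma_{L_k})\ge I(\sigma_\rho;\wh\sigma_\rho)$ by the data-processing inequality, and by symmetry $\wh\sigma_\rho$ is a uniform bit, so $I(\sigma_\rho;\wh\sigma_\rho)=\ln 2-H_b(\bP[\wh\sigma_\rho=\sigma_\rho])$ is bounded below by a positive constant. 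The estimator is the recursive \emph{unanimity vote}: set $\wh\sigma_v=\sigma_v$ for $v\in L_k$, and for an internal vertex $v$ with downward hyperedges $S_1,\dots,S_t$, declare $S_j=\{v,w_1,\dots,w_{r-1}\}$ to ``point to $x$'' if $\wh\sigma_{w_1}=\cdots=\wh\sigma_{w_{r-1}}=x$ and ``silent'' otherwise, then let $\wh\sigma_v$ be whichever of $\pm$ is pointed to by more hyperedges, breaking ties (and all-silent vertices) by an independent fair coin. Unanimity is the natural rule because, for $B_{r,\lambda}$, the only child configuration carrying information about the parent is the all-equal one; moreover this rule is exactly the layerwise MAP rule, since the unanimity likelihood ratio of a hyperedge is monotone in $\#\{\text{points to }+\}-\#\{\text{points to }-\}$.

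Because the subtrees below distinct downward hyperedges of $v$ are conditionally independent given $\sigma_v$, and the children's subtrees inside one hyperedge are conditionally independent given $(\sigma_{w_1},\dots,\sigma_{w_{r-1}})$, the accuracy $p_k\eqdef\bP[\wh\sigma_v=\sigma_v]$ at height $k$ obeys a one-dimensional recursion $p_{k+1}=F(p_k)$ with $p_0=1$. A short computation with $B_{r,\lambda}$ — in which the ``wrong-unanimity'' cross terms telescope via the binomial theorem — gives, writing $p=p_k$,
\begin{align}
  v_+ \eqdef \bP[S_j\text{ points to }+\mid\sigma_v=+] &= \lambda p^{r-1}+(1-\lambda)2^{-(r-1)},\\
  v_- \eqdef \bP[S_j\text{ points to }-\mid\sigma_v=+] &= \lambda(1-p)^{r-1}+(1-\lambda)2^{-(r-1)},
\end{align}
with silence probability $1-v_+-v_-$, and the conditional law given $\sigma_v=-$ obtained by swapping $v_+\leftrightarrow v_-$. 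Hence $F(p)$ is the probability that, among $t\sim D$ hyperedges with per-hyperedge outcome probabilities $(v_+,v_-,1-v_+-v_-)$, strictly more point to $+$ than to $-$, plus half the tie probability. One checks $F(\tfrac12)=\tfrac12$, and $F$ is nondecreasing on $[\tfrac12,1]$ (more accurate children raise $v_+$ and lower $v_-$, hence stochastically increase the vote margin). Consequently it suffices to exhibit one $p^\dagger\in(\tfrac12,1)$ with $F(p^\dagger)\ge p^\dagger$: monotonicity makes $[p^\dagger,1]$ forward invariant, so $p_0=1$ forces $p_k\ge p^\dagger$ for all $k$, and reconstruction follows.

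To verify $F(p^\dagger)\ge p^\dagger$ take $p^\dagger=1-\epsilon_0$ for a suitably small $\epsilon_0=\epsilon_0(r)<\tfrac12$. For $p\in[p^\dagger,1]$ one has $v_+\ge\lambda(1-\epsilon_0)^{r-1}$ and $v_-<2^{-(r-1)}$, so there is a multiplicative gap of order $\lambda(1-\epsilon_0)^{r-1}2^{r-1}$ between the expected numbers of $+$- and $-$-pointers. Choosing a threshold $\tau$ between $d v_-$ and $d v_+$ (with $d=\bE_{t\sim D}t$) and applying Chernoff bounds to the relevant (Poisson-thinned) binomial counts $N_\pm$ yields $\bP[\wh\sigma_v\ne+\mid\sigma_v=+]\le\bP[N_+\le\tau]+\bP[N_-\ge\tau]\le 2\exp(-c(r)\,d\lambda)$, provided $\lambda(1-\epsilon_0)^{r-1}2^{r-1}$ exceeds an absolute constant. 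Thus $F(p^\dagger)\ge p^\dagger$ holds once
\begin{align}
  \lambda\gtrsim_r 2^{-(r-1)}\qquad\text{and}\qquad d\lambda\ge C(r)\eqdef c(r)^{-1}\ln(2/\epsilon_0). \label{eqn:recon-window}
\end{align}
It remains to fit this inside the sub-KS constraint $(r-1)d\lambda^2<1$: setting $d\lambda=C(r)$ this reads $\lambda<\tfrac1{(r-1)C(r)}$, so \eqref{eqn:recon-window} and sub-KS are compatible iff the window $\Theta_r(2^{-(r-1)})\le\lambda<\tfrac1{(r-1)C(r)}$ is nonempty, i.e.\ iff $2^{r-1}$ beats $(r-1)C(r)$ times a constant. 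Since $C(r)$ and that constant grow only like $(1-\epsilon_0)^{-O(r)}$, a fixed small $\epsilon_0$ makes this hold for all $r$ past an absolute threshold, and one then takes $d_0(r)$ to be anything exceeding $(r-1)C(r)^2$. For $D=\Pois(d)$ the counts $N_\pm$ are exactly independent Poissons by thinning, which only helps; the one extra bookkeeping point is that a hyperedge is useful only when all $r-1$ of its child subtrees reach the relevant depth, costing a factor $s^{r-1}$ with $s$ the survival probability — but $s\to 1$ as $d\to\infty$, so this is absorbed into $d_0(r)$.

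The main obstacle is sharpening the above so that the window \eqref{eqn:recon-window} is nonempty precisely at $r=7$: the crude argument only gives some finite $r_0$, and pushing $r_0$ down to $7$ requires optimizing $\epsilon_0$ (equivalently the fixed point $p^\dagger$) as an $r$-dependent quantity, using the exact rate functions for the unanimity counts rather than generic Chernoff estimates, and exploiting the precise expression $v_+=\lambda p^{r-1}+(1-\lambda)2^{-(r-1)}$ when locating $p^\dagger$. By contrast, the monotonicity of $F$, the layerwise-MAP interpretation, and the Poisson survival accounting are routine.
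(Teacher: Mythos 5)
There is a genuine gap, and it is twofold. First, a quantifier problem: the theorem asserts that for \emph{every} $d\ge d_0(r)$ there is a sub-KS $\lambda$ with reconstruction, but your window argument pins $d\lambda=C(r)$ together with the lower bound $\lambda\gtrsim_r 2^{-(r-1)}$, which caps $d$ at roughly $C(r)2^{r-1}$. For large $d$ the sub-KS constraint forces $\lambda\le((r-1)d)^{-1/2}\to 0$, so the proviso under which you derived the error bound $2\exp(-c(r)d\lambda)$ — namely that $\lambda(1-\epsilon_0)^{r-1}2^{r-1}$ exceeds a constant, i.e.\ that the signal term $\lambda p^{r-1}$ dominates the noise term $(1-\lambda)2^{-(r-1)}$ in $v_\pm$ — fails. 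In that regime both $dv_+$ and $dv_-$ are of order $d2^{-(r-1)}$ and the correct separation condition is of the form $d\lambda^2 2^{r-1}\gtrsim_r \log(1/\epsilon_0)$ (a $\chi^2$-type condition), not $d\lambda\gtrsim 1$; the Chernoff analysis has to be redone there, and the compatibility with $(r-1)d\lambda^2<1$ then hinges on whether $2^{r-1}/(r-1)$ beats an explicit rate-function constant. You did not carry this out.

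Second, and more fundamentally, even granting the repaired census argument, you concede that it only yields ``some finite $r_0$'' and that reaching $r=7$ would require optimizing $p^\dagger$ and using exact rate functions — but that deferred step is precisely the content of the theorem, whose claim is $r\ge 7$, not $r\ge r_0$ for an unspecified $r_0$. The paper gets to $r=7$ by a sharper route: it tracks the $\chi^2$-capacity through the BP recursion and proves a Gaussian (Sly-type) approximation, $C_{\chi^2}(\BP(P))\approx g_{r,d,\lambda}(C_{\chi^2}(P))$ with $g_{r,d,\lambda}(x)=\bE_Z\tanh\bigl(s+\sqrt{s}Z\bigr)$, $s=d\lambda^2\cdot\tfrac12\bigl((1+x)^{r-1}-(1-x)^{r-1}\bigr)$, then checks $g_7(0.8)>0.8$ (and monotonicity in $r$ and continuity in the parameters) to exhibit a stable nontrivial fixed point just below KS uniformly in large $d$. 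A hard-decision unanimity vote discards most of the likelihood information carried by the soft BP messages, and near the KS threshold (where the per-hyperedge signal is $O(\lambda)$ with $\lambda\to 0$) it is far from clear such a census can certify reconstruction at $r=7$ at all; so the missing optimization is not routine bookkeeping but the crux of the result.
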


\paragraph{Our technique.}
Our main technique for proving the non-reconstruction results is the multi-terminal SDPIs.
Theorem~\ref{thm:boht-non-recon} is a simple application of the multi-terminal SDPIs and subadditivity properties (under $\star$-convolution) of the relevant information measures.

Theorem~\ref{thm:boht-non-recon-special} is by applying Theorem~\ref{thm:boht-non-recon} and computing the relevant multi-terminal contraction coefficients, except for the critical case $(r-1)d \lambda^2=1$.
For Theorem~\ref{thm:boht-non-recon-special}\ref{item:thm-boht-non-recon-special-r34} we compute the SKL multi-terminal contraction coefficients for $r=3,4$.
For Theorem~\ref{thm:boht-non-recon-special}\ref{item:thm-boht-non-recon-special-any-d} we compute the $\chi^2$-multi-terminal contraction coefficients.
Theorem~\ref{thm:boht-non-recon-special}\ref{item:thm-boht-non-recon-special-small-d} is a corollary of \ref{item:thm-boht-non-recon-special-any-d}.
To compute the contraction coefficients, we write down an explicit description of the BP operator, and use properties of BMS channels such as BSC mixture representation and extremal BMS channels.

It turns out that for the special BOHT model, the tight reconstruction threshold can be achieved for $r=3$ using SKL or $\chi^2$-contraction, and for $r=4$ using SKL.
KL contraction does not give tight threshold for any $r\ge 3$ and $\chi^2$-contraction fails for $r\ge 4$.
In particular, there exists $\lambda\in(0,1)$ such that $\eta_{\chi^2}^{(m,s)}(B_{4,\lambda}) > \eta_{\SKL}^{(m,s)}(B_{4,\lambda}) = \lambda^2$.
This shows an important difference between single-terminal and multi-terminal SDPIs, because for single-terminal SDPIs we always have $\eta_{\chi^2}(\pi, P) \le \eta_{\SKL}(\pi, P)$ (e.g., \cite{cohen1998comparisons,raginsky2016strong,polyanskiy2017strong}).
Furthermore, before our work, to the best of our knowledge, non-reconstruction results proved via SKL information could always be also shown via other information measures ($\chi^2$-information~\cite{evans2000broadcasting}, KL information~\cite{gu2020non}, etc.). It appears, thus, that BOHT is the first example where contraction via SKL information gives better results than any other information measures we have tried.

For the critical case in Theorem~\ref{thm:boht-non-recon-special}\ref{item:thm-boht-non-recon-special-r34}\ref{item:thm-boht-non-recon-special-small-d}, some extra argument is needed. Roughly speaking, we show that the multi-terminal SDPI achieves equality only when the input channel $P$ is trivial. So any fixed point of the BP operator must be trivial.

Theorem~\ref{thm:hsbm-special} is a corollary of Theorem~\ref{thm:boht-non-recon-special} via a standard reduction which says non-reconstruction for BOHT implies impossibility of weak recovery for the corresponding HSBM.
This reduction was first proved by \cite{mossel2015reconstruction,mossel2018proof} for the two-community SBM, and we extend it to handle general HSBM.

Theorem~\ref{thm:boht-recon-large-d} is proved using Gaussian approximation at large $d$ and contraction of $\chi^2$-information.
This is a method introduced by \cite{sly2009reconstruction,sly2011reconstruction} and has proved successful in several settings \cite{liu2019large,mossel2022exact}.


\paragraph{Structure of the paper.}
In Section~\ref{sec:prelim}, we review preliminaries on information channels.
In Section~\ref{sec:multi-sdpi}, we introduce the multi-terminal SDPI and prove Theorem~\ref{thm:boht-non-recon}.
In Section~\ref{sec:non-recon-special} we study the special BOHT model $\BOHT(2,r,\lambda,D)$ and prove Theorem~\ref{thm:boht-non-recon-special}.
In Section~\ref{sec:discussion}, we discuss a possible approach to resolve the $r=5,6$ case of the special BOHT model.

In Section~\ref{sec:boht-non-recon-fixed}, we prove non-reconstruction results for BOHT models on a fixed hypertree.
In Section~\ref{sec:skl-multi-sdpi}, we compute SKL multi-terminal contraction coefficients for $B_{r,\lambda}$ with $r=3,4$.
In Section~\ref{sec:chi2-multi-sdpi}, we compute $\chi^2$-multi-terminal contraction coefficients for several binary-input symmetric channels.
In Section~\ref{sec:hsbm}, we give a general reduction from HSBM to BOHT, and prove Theorem~\ref{thm:hsbm-special}.
In Section~\ref{sec:recon-large-deg}, we prove Theorem~\ref{thm:boht-recon-large-d}, that the KS threshold is not tight for the special BOHT model with $r\ge 7$ and large $d$.



\section{Preliminaries} \label{sec:prelim}
We give necessary preliminaries on information channels, especially BMS channels. Most material in this section can be found in \cite{polyanskiy2023information} or \cite[Chapter 4]{richardson2008modern}.

\begin{definition}[BMS channels] \label{defn:bms}
  A channel $P: \{\pm\} \to \cY$ is called a binary memoryless symmetric (BMS) channel if there exists a measurable involution $\sigma: \cY\to \cY$ such that $P(E|+) = P(\sigma(E)|-)$ for all measurable subsets $E\subseteq \cY$.
\end{definition}
Binary erasure channels (BECs) and binary symmetric channels (BSCs) are the simplest examples of BMS channels. Channel $B_{r,\lambda}: \{\pm\} \to \{\pm\}^{r-1}$ defined in~\eqref{eqn:boht-B-special} (together with coordinate-wise sign flip) is also naturally a BMS channel.

BMS channels are equivalent to distributions on the interval $\left[0,\frac 12\right]$, via the following lemma.
\begin{lemma}[BSC mixture representation of BMS channels] \label{lem:bms-mixture}
  Every BMS channel $P$ is equivalent to a channel $X\to (\Delta,Z)$ where $\Delta\in \left[0,\frac 12\right]$ is independent of $X$ and $P_{Z|\Delta,X} = \BSC_\Delta(\cdot | X)$.
\end{lemma}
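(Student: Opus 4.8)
The plan is to realize $\Delta$ and $Z$ as (essentially) functions of the channel output $Y$: $\Delta$ is the ``local error probability'' attached to an output symbol, and $Z$ is the sign of the log-likelihood ratio (LLR). Then one checks the three properties that make this the claimed representation: $(\Delta,Z)$ is a sufficient statistic for $X$, the variable $\Delta$ is independent of $X$, and the conditional law $P_{Z\mid \Delta,X}$ is $\BSC_\Delta$.

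First I would fix a convenient dominating measure. By the BMS property, $\mu := \tfrac12\big(P(\cdot|+)+P(\cdot|-)\big)$ is invariant under the involution $\sigma$; writing $p_\pm := dP(\cdot|\pm)/d\mu$ one gets $p_++p_-=2$ and $p_+\circ\sigma = p_-$, $\mu$-a.e. Define the LLR $L(y):=\log\!\big(p_+(y)/p_-(y)\big)\in[-\infty,\infty]$, and set $\Delta(y):=\min(p_+(y),p_-(y))/2 = (1+e^{|L(y)|})^{-1}\in[0,\tfrac12]$ and $Z(y):=\sgn L(y)\in\{\pm\}$, with the convention that on the event $\{L=0\}$ (equivalently $\{\Delta=\tfrac12\}$) the value $Z$ is drawn from an independent fair coin; this is the one place where the underlying probability space must be enlarged.

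Next I would verify the three claims. \emph{Sufficiency}: $(\Delta,Z)$ and $L$ determine each other, and $L$ is a sufficient statistic for $X$ by the Fisher--Neyman factorization (the density $p_x$ factors through $L$), so $X\to(\Delta,Z)\to Y$ is a Markov chain; conversely $(\Delta,Z)$ is a function of $Y$ together with external randomness, so the channels $X\to Y$ and $X\to(\Delta,Z)$ are equivalent. \emph{Independence}: for a Borel set $A$, since $\Delta\circ\sigma=\Delta$ and $\mu$ is $\sigma$-invariant,
\begin{align}
  \bP[\Delta\in A\mid X=-] = \int_{\{\Delta\in A\}} p_-\,d\mu = \int_{\{\Delta\in A\}} (p_+\circ\sigma)\,d\mu = \int_{\sigma(\{\Delta\in A\})} p_+\,d\mu = \bP[\Delta\in A\mid X=+],
\end{align}
so $\Delta$ has the same law under both inputs. \emph{BSC structure}: on $\{\Delta=\delta\}$ with $0<\delta<\tfrac12$ one has $p_+\in\{2(1-\delta),2\delta\}$ according as $Z=+$ or $Z=-$ (using $p_++p_-=2$ and $\Delta=\min(p_+,p_-)/2$), and the regions $R_+:=\{Z=+,\Delta=\delta\}$, $R_-:=\{Z=-,\Delta=\delta\}$ are $\sigma$-images of one another, hence $\mu(R_+)=\mu(R_-)$; integrating $p_+$ over each gives
\begin{align}
  \bP[Z=-\mid \Delta=\delta,X=+] = \frac{2\delta\,\mu(R_-)}{2\delta\,\mu(R_-)+2(1-\delta)\,\mu(R_+)} = \delta,
\end{align}
and symmetrically $\bP[Z=+\mid\Delta=\delta,X=-]=\delta$, i.e.\ $P_{Z\mid\Delta,X}=\BSC_\Delta$. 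The boundary values $\delta=\tfrac12$ (the fair coin) and $\delta=0$ (a deterministic output) are consistent with $\BSC_\delta$.

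I expect the only real subtlety to be the measure-theoretic bookkeeping on a general output space $\cY$: selecting a $\sigma$-invariant dominating measure, making the conditioning on the possibly continuous variable $\Delta$ rigorous via disintegration, and correctly enlarging the space to supply the fair coin on the atom $\{\Delta=\tfrac12\}$. For discrete $\cY$ everything reduces to partitioning the alphabet into $\sigma$-orbits: fixed points of $\sigma$ contribute $\Delta=\tfrac12$, while each two-element orbit $\{y,\sigma(y)\}$ is (conditionally on landing in it, which has probability independent of $X$) a $\BSC$ with crossover probability $\min(p_+(y),p_-(y))/(p_+(y)+p_-(y))$. This makes the statement transparent, and the general case is the same idea phrased with Radon--Nikodym derivatives. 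Since the excerpt points to \cite[Chapter 4]{richardson2008modern}, I would also note that this is the standard $|D|$-representation of BMS channels and cite it.
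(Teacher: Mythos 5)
Your argument is correct: defining $\Delta$ from the magnitude of the log-likelihood ratio and $Z$ from its sign, and using $\sigma$-invariance of the symmetrized output measure to get sufficiency, independence of $\Delta$ from $X$, and the $\BSC_\Delta$ kernel, is exactly the standard derivation, and you flag the only genuine subtleties (disintegration over $\Delta$ and the fair coin on the atom $\{\Delta=\tfrac12\}$). The paper itself gives no proof of Lemma~\ref{lem:bms-mixture}, treating it as a known fact from \cite{polyanskiy2023information} and \cite[Chapter 4]{richardson2008modern}, and your write-up matches the argument given there.
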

In the setting of Lemma~\ref{lem:bms-mixture}, we say $\Delta$ is the $\Delta$-component of $P$.
We define $\theta=1-2\Delta \in [0,1]$ to be the $\theta$-component of $P$, because it sometimes simplifies the notation.

Degradation is a very useful relationship between channels.
\begin{definition}[Degradation] \label{defn:degradation}
  Let $P: \cX \to \cY$ and $Q: \cX \to \cZ$ be two channels with the same input alphabet. We say $P$ is a degradation of $Q$, denoted $P\le_{\deg} Q$, if there exists channel $R: \cZ\to \cY$ such that $P = R\circ Q$.
\end{definition}

For any $f$-divergence, distrbution $\pi \in \cP(\cX)$ and channel $P: \cX \to \cY$, we define $I_f(\pi, P)$ as the $f$-information $I_f(X; Y)$ where $X$ is a random variable with distribution $\pi$ and $Y$ is the output of $P$ when given input $X$.
Every $f$-information respects degradation: if $P \le_{\deg} Q$, then $I_f(\pi, P) \le I_f(\pi, Q)$ for any $f$ and $\pi$.
For our purpose, the most important $f$-divergences are the KL divergence $f(x)=x\log x$, $\chi^2$-divergence $f(x)=(x-1)^2$, and symmetric KL (SKL) divergence $f(x)=(x-1)\log x$. We denote the corresponding $f$-information as $I$, $I_{\chi^2}$ and $I_{\SKL}$ respectively.

When $P$ is a BMS channel and $\pi=\Unif(\{\pm\})$, we use $C_f(P)$ to denote $I_f(\pi, P)$.
We can compute $C_f(P)$ using the $\Delta$-component. In particular, we have the following information measures.
\begin{definition}[Information measures for BMS channels]\label{defn:bms-info-measure}
  Let $P$ be a BMS channel, $\Delta$ be its $\Delta$-component, and $\theta$ be its $\theta$-component. We define the following information measures.
  \begin{align}
    C(P) & = \bE [\log 2 + \Delta \log \Delta + (1-\Delta)\log(1-\Delta)], \tag{capacity}\\
    C_{\chi^2}(P) &= \bE \theta^2, \tag{$\chi^2$-capacity}\\
    C_{\SKL}(P) &= \bE\left[\left(\frac12-\Delta\right)\log \frac{1-\Delta}{\Delta}\right] = \bE\left[\theta \arctanh\theta\right]. \tag{$\SKL$ capacity}
  \end{align}
\end{definition}

Let $P: \cX\to \cY$ and $Q: \cX'\to \cY'$ be two channels. We define the tensor product channel $P\times Q: \cX\times \cX'\to \cY\times \cY'$ by letting $P$ and $Q$ acting on the two inputs independently.
For $n\in \bZ_{\ge 1}$, we use $P^{\times n}: \cX^n \to \cY^n$ to denote the $n$-th tensor power of $P$.

Let $P: \cX\to \cY$ and $Q: \cX\to \cZ$ be two channels with the same input alphabet. We define the $\star$-convolution $P\star Q: \cX \to \cY\times \cZ$ by letting $P$ and $Q$ acting on the same input independently.
For $n\in \bZ_{\ge 0}$, we use $P^{\star n}: \cX \to \cY^n$ to denote the $n$-th $\star$-power of $P$.

Mutual information and SKL information are useful for the study of BOHT models because they are subadditive under $\star$-convolution (SKL information is even additive).
For any two channels $P,Q$ with input alphabet $\cX$ and any $\pi\in \cP(\cX)$, we have
\begin{align}
  I(\pi, P\star Q) \le I(\pi, P) + I(\pi, Q), \qquad I_{\SKL}(\pi, P\star Q) = I_{\SKL}(\pi, P) + I_{\SKL}(\pi, Q).
\end{align}
The mutual information part is standard, and the SKL information part first appeared in \cite{kulske2009symmetric}.
For $\chi^2$-information, subadditivity does not hold in general, but \cite{abbe2019subadditivity} proved that subadditivity holds when $\cX=\{\pm\}$ and $\pi = \Unif(\{\pm\})$.
That is, for any two binary-input channels $P,Q$, we have
\begin{align}
  I_{\chi^2}(\Unif(\{\pm\}), P\star Q) \le I_{\chi^2}(\Unif(\{\pm\}), P) + I_{\chi^2}(\Unif(\{\pm\}), Q).
\end{align}


\section{Multi-terminal SDPI} \label{sec:multi-sdpi}
Let $q\ge 2$, $r\ge 2$ be integers, $\pi\in \cP([q])$, and $B: [q]\to [q]^{r-1}$ be a channel satisfying \eqref{eqn:defn-boht-prob-kernel}.
For any $f$-divergence, we define the multi-terminal contraction coefficient as
\begin{align} \label{eqn:defn-homo-multi-eta}
  \eta^{(m)}_f(\pi, B) := \sup_{P} \frac{I_f(\pi, P^{\times (r-1)}\circ B)}{(r-1)I_f(\pi,P)}
\end{align}
where $P$ goes over all channels with input alphabet $[q]$ for which $0 < I_f(\pi, P) < \infty$.

In other words, $\eta^{(m)}_f(\pi, B)$ is the smallest constant such that for any diagram as in Figure~\ref{fig:homo-multi-sdpi} where $P_X=\pi$, $P_{Y^{r-1}|X} = B$, $P_{U_i|Y_i}=P$, we have
\begin{align}
  I_f(X; U^{r-1}) \le (r-1) \eta^{(m)}_f(\pi, B) I_f(Y_1; U_1).
\end{align}
\begin{figure}[ht]
	\centering
	\includegraphics{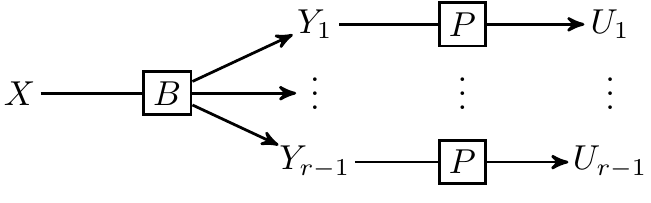}
  \caption{Setting for homogeneous multi-terminal SDPI}
  \label{fig:homo-multi-sdpi}
\end{figure}

In the above definition, we assume $P_{U_i|Y_i}$ are the same for all $i\in [r-1]$. Therefore \eqref{eqn:defn-homo-multi-eta} defines a homogeneous multi-terminal contraction coefficient.
It is also possible to define a heterogeneous version, where the input channels can be different.
We define
\begin{align} \label{eqn:defn-hetero-multi-eta}
  \eta^{(m,ht)}_f(\pi, B) := \sup_{P_1,\ldots,P_{r-1}} \frac{I_f(\pi, (P_1\times \cdots \times P_{r-1})\circ B)}{\sum_{i\in [r-1]} I_f(\pi, P_i)}
\end{align}
where $P_1,\ldots,P_{r-1}$ goes over all channels with input alphabet $[q]$ for which $0 < \sum_{i\in [r-1]} I_f(\pi, P_i) < \infty$ (here $ht$ stands for ``heterogeneous'').
In other words, $\eta^{(m,ht)}_f(\pi, B)$ is the smallest constant such that for any diagram as in Figure~\ref{fig:hetero-multi-sdpi} where $P_X = \pi$, we have
\begin{align}
  I_f(X; U^{r-1}) \le \eta^{(m,ht)}_f(\pi, B) \sum_{i\in [r-1]} I_f(Y_i; U_i).
\end{align}
\begin{figure}[ht]
  \centering
	\includegraphics{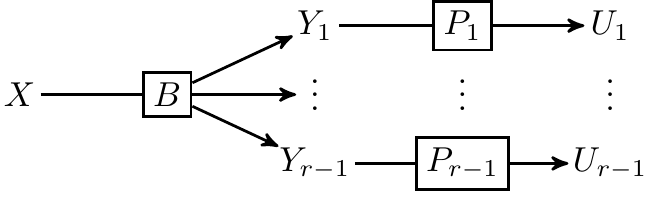}
  \caption{Setting for heterogeneous multi-terminal SDPI}
  \label{fig:hetero-multi-sdpi}
\end{figure}

It is clear from definition that
\begin{align}
  \eta^{(m)}_f(\pi, B) \le \eta^{(m,ht)}_f(\pi, B).
\end{align}

Unlike the usual contraction coefficients, it is not true in general that $\eta^{(m)}_f(\pi, B) \le 1$. Nevertheless, this holds when the $f$-information is subadditive under $\star$-convolution.
For the mutual information, we have
\begin{align} \label{eqn:kl-multi-eta-le-one}
  I(X; U^{r-1}) \le I(Y^{r-1}; U^{r-1}) \le \sum_{i\in [r-1]} I(Y^{r-1}; U_i) = \sum_{i\in [r-1]} I(Y_i, U_i),
\end{align}
where the first step is by DPI and the second step is by subadditivity.
Therefore
\begin{align}
  \eta^{(m)}_{\KL}(\pi, B) \le \eta^{(m,ht)}_{\KL}(\pi, B) \le 1.
\end{align}
The same holds for the SKL mutual information, so
\begin{align}
  \eta^{(m)}_{\SKL}(\pi, B) \le \eta^{(m,ht)}_{\SKL}(\pi, B) \le 1.
\end{align}

When $q=2$, $\pi=\Unif(\{\pm\})$, and $B$ together with the sign flip is a BMS channel, for any BMS channels $P_1,\ldots,P_{r-1}$, the combined channel $(P_1\times \cdots \times P_{r-1}) \circ B$ is also a BMS channel.
In this case we can define versions of multi-terminal contraction coefficients restricted to BMS channels.
We define
\begin{align}
  \eta^{(m,s)}_f(B) := \sup_{P} \frac{C_f(P^{\times (r-1)}\circ B)}{(r-1) C_f(P)}
\end{align}
where $P$ goes over BMS channels with $0<C_f(P) < \infty$, and
\begin{align}
  \eta^{(m,ht,s)}_f(B) := \sup_{P_1,\ldots,P_{r-1}} \frac{C_f((P_1\times \cdots \times P_{r-1})\circ B)}{\sum_{i\in [r-1]} C_f(P_i)}
\end{align}
where $P_1,\ldots,P_{r-1}$ goes over BMS channels with $0 < \sum_{i\in [r-1]} C_f(P_{i}) < \infty$.
Because $\chi^2$-capacity is subadditive over BMS channels, by a similar computation as \eqref{eqn:kl-multi-eta-le-one} we have
\begin{align}
  \eta^{(m,s)}_{\chi^2}(B) \le \eta^{(m,ht,s)}_{\chi^2}(B) \le 1.
\end{align}

With these definitions, it is very easy to prove Theorem~\ref{thm:boht-non-recon}.
\begin{proof}[Proof of Theorem~\ref{thm:boht-non-recon}]
  Let $M_k$ denote the channel $\sigma_\rho \mapsto (T_k, \sigma_{L_k})$. Then $(M_k)_{k\ge 0}$ satisfies the BP recursion $M_{k+1}=\BP(M_k)$, where $\BP$ is defined in \eqref{eqn:bp-operator}.

  \paragraph{Part \ref{item:thm-boht-non-recon:general}, mutual information:}
  For any channel $P$ with input alphabet $[q]$ we have
  \begin{align}
    I(\pi, \BP(P)) \le d I(\pi, P^{\times (r-1)} \circ B) \le (r-1) d \eta^{(m)}_{\KL}(\pi, B) I(\pi, P),
  \end{align}
  where the first step is by subadditivity of mutual information, and the second step is by definition of multi-terminal contraction coefficients.
  If \eqref{eqn:thm-boht-non-recon:general-kl} holds, then $I(\pi, M_{k+1}) \le c I(\pi, M_k)$ for $c=(r-1) d \eta^{(m)}_{\KL}(\pi, B)<1$.
  Because $I(\pi, M_0)<\infty$, we have $\lim_{k\to \infty} I(\pi, M_k)=0$ and non-reconstruction holds.

  \paragraph{Part \ref{item:thm-boht-non-recon:general}, SKL information:}
  If \eqref{eqn:thm-boht-non-recon:general-skl} holds, then
  \begin{align}
    I_{\SKL}(\pi, M_1) = d I_{\SKL}(\pi, P^{\times (r-1)} \circ B)
    \le d I_{\SKL}(\pi, B) < \infty,
  \end{align}
  where the first step is by additivity of SKL information, and the second step is by DPI.
  The rest of the proof is similar to the previous case.

  \paragraph{Part \ref{item:thm-boht-non-recon:bms}, KL and SKL capacity:}
  The channels $M_k$ are all BMS channels, so we can use the BMS version of multi-terminal contraction coefficients. The rest of the proof is the same as Part \ref{item:thm-boht-non-recon:general}.

  \paragraph{Part \ref{item:thm-boht-non-recon:bms}, $\chi^2$-capacity:}
  Use subadditivity of $\chi^2$-capacity for BMS channels and the proof is similar to previous cases.
\end{proof}
See Section~\ref{sec:boht-non-recon-fixed} for a variation of Theorem~\ref{thm:boht-non-recon} to the fixed-hypertree model $\BOHT(T,q,r,\pi,B)$.

\section{Non-reconstruction for the special BOHT model} \label{sec:non-recon-special}
In this section we focus on the special BOHT model $\BOHT(2,r,\lambda,D)$, which is the BOHT model with $q=2$, $\pi=\Unif(\{\pm\})$, and $B=B_{r,\lambda}$ as defined in \eqref{eqn:boht-B-special}.
We prove non-reconstruction results for this model using Theorem~\ref{thm:boht-non-recon}\ref{item:thm-boht-non-recon-fixed:bms} by computing the relevant multi-terminal contraction coefficients.



To compute the contraction coefficients, we need to descibe $P^{\times (r-1)} \circ B_{r,\lambda}$ (where $P$ is a BMS channel) in a more explicit way. By BSC mixture representation, we only need to describe $(\BSC_{\Delta_1}\times \cdots \times \BSC_{\Delta_{r-1}})\circ B_{r,\lambda}$.
Let $\theta_i := 1-2\Delta_i$ for $i\in [r-1]$.
For $x\in \{\pm\}^{r-1}$, we have
\begin{align}
  & ((\BSC_{\Delta_1}\times \cdots \times \BSC_{\Delta_{r-1}})\circ B_{r,\lambda})(x_1,\ldots,x_{r-1} | +) \\
  \nonumber =&~ \sum_{y\in \{\pm\}^{r-1}} B_{r,\lambda}(y_1,\ldots,y_{r-1} |+) \prod_{i\in [r-1]} \BSC_{\Delta_i}(x_i | y_i) \\
  \nonumber =&~ \lambda \prod_{i\in [r-1]} \BSC_{\Delta_i}(x_i | +)
  + \frac 1{2^{r-1}} (1-\lambda) \prod_{i\in [r-1]} \sum_{y_i\in\{\pm\}} \BSC_{\Delta_i}(x_i| y_i) \\
  \nonumber =&~ \lambda \prod_{i\in [r-1]} \left(\frac12 + \left(\frac12-\Delta_i\right) x_i\right) + \frac 1{2^{r-1}}(1-\lambda)\\
  \nonumber =&~ \lambda \prod_{i\in [r-1]} \left(\frac12 + \frac12 \theta_i x_i\right) + \frac 1{2^{r-1}}(1-\lambda).
\end{align}

So $(\BSC_{\Delta_1}\times \cdots \times \BSC_{\Delta_{r-1}})\circ B_{r,\lambda}$ is a mixture of $2^{r-2}$ BSCs,
indexed by the set
\begin{align}
  \left\{x: x\in \{\pm\}^{r-1}, x_1=+\right\},
\end{align}
where the BSC corresponding to $x$ has weight (probability)
\begin{align}
  & \left(\lambda \prod_{i\in [r-1]} \left(\frac12 + \frac12\theta_i x_i\right) + \frac 1{2^{r-1}}(1-\lambda)\right)
   + \left(\lambda \prod_{i\in [r-1]} \left(\frac12 - \frac12\theta_i x_i\right) + \frac 1{2^{r-1}}(1-\lambda)\right)\\
  \nonumber =&~ \lambda \left(\prod_{i\in [r-1]} \left(\frac12 + \frac12\theta_i x_i\right) +\prod_{i\in [r-1]} \left(\frac12 - \frac12\theta_i x_i\right)\right) + \frac 1{2^{r-2}}(1-\lambda)
\end{align}
and $\theta$ parameter equal to the absolute value of
\begin{align}
  &~\frac{\left(\lambda \prod_{i\in [r-1]} \left(\frac12 + \frac12\theta_i x_i\right) + \frac 1{2^{r-1}}(1-\lambda)\right)
  - \left(\lambda \prod_{i\in [r-1]} \left(\frac12 - \frac12\theta_i x_i\right) + \frac 1{2^{r-1}}(1-\lambda)\right)}{\left(\lambda \prod_{i\in [r-1]} \left(\frac12 + \frac12\theta_i x_i\right) + \frac 1{2^{r-1}}(1-\lambda)\right)
  + \left(\lambda \prod_{i\in [r-1]} \left(\frac12 - \frac12\theta_i x_i\right) + \frac 1{2^{r-1}}(1-\lambda)\right)}\\
  \nonumber =&~\frac{\lambda \left( \prod_{i\in [r-1]} \left(\frac12 + \frac12\theta_i x_i\right) - \prod_{i\in [r-1]} \left(\frac12 - \frac12\theta_i x_i\right)\right)}{\lambda \left(\prod_{i\in [r-1]} \left(\frac12 + \frac12\theta_i x_i\right) +\prod_{i\in [r-1]} \left(\frac12 - \frac12\theta_i x_i\right)\right) + \frac 1{2^{r-2}}(1-\lambda)}\\
  \nonumber =&~ \frac{\lambda \left( \prod_{i\in [r-1]} \left(1+\theta_i x_i\right) - \prod_{i\in [r-1]} \left(1-\theta_i x_i\right)\right)}{\lambda \left(\prod_{i\in [r-1]} \left(1+\theta_i x_i\right) +\prod_{i\in [r-1]} \left(1-\theta_i x_i\right)\right) + 2(1-\lambda)}.
\end{align}
Although we need to take absolute value, information measures (except for $P_e$) in Definition~\ref{defn:bms-info-measure} are all even functions in $\theta$.
So we do not need to worry about the sign.

Using this explicit description of $P^{\times (r-1)} \circ B_{r,\lambda}$, we are able to compute several multi-terminal contraction coefficients of $B_{r,\lambda}$.

For Theorem~\ref{thm:boht-non-recon-special}\ref{item:thm-boht-non-recon-special-r34}, we compute the SKL contraction coefficients.
\begin{proposition}[SKL contraction coefficient]
  \label{prop:skl-multi-sdpi}
  Fix $r=3$ or $4$ and $\lambda\in [0,1]$. Then
  \begin{align}
    \eta^{(m,ht,s)}_{\SKL}(B_{r,\lambda}) = \lambda^2.
  \end{align}
  Furthermore, if $0<\lambda<1$ and $P_1,\ldots,P_{r-1}$ are BMS channels with at least one $P_i$ non-trivial, then
  \begin{align} \label{eqn:prop-skl-multi-sdpi-strict}
    C_{\SKL}((P_1\times \cdots \times P_{r-1})\circ B_{r,\lambda}) < \lambda^2 \sum_{i\in [r-1]} C_{\SKL}(P_i).
  \end{align}
\end{proposition}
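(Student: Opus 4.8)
First, I would prove the matching bounds $\eta^{(m,ht,s)}_{\SKL}(B_{r,\lambda})\ge\lambda^2$ and $\eta^{(m,ht,s)}_{\SKL}(B_{r,\lambda})\le\lambda^2$ separately. For the lower bound, take $P_1=\BSC_{(1-\theta)/2}$ (whose $\theta$-component is the constant $\theta\in(0,1)$) and $P_2=\cdots=P_{r-1}$ completely noisy, so $\sum_{i\in[r-1]}C_{\SKL}(P_i)=\theta\arctanh\theta$. Writing $B_{r,\lambda}$ in ``mode'' form --- with probability $\lambda$ all $r-1$ outputs equal the input, and with probability $1-\lambda$ they are i.i.d.\ uniform --- the last $r-2$ coordinates of $(P_1\times\cdots\times P_{r-1})\circ B_{r,\lambda}$ are i.i.d.\ uniform and jointly independent of the input, so this channel is information-equivalent to its first coordinate $\lambda\,\BSC_{(1-\theta)/2}+(1-\lambda)\,\BSC_{1/2}=\BSC_{(1-\lambda\theta)/2}$, whose $C_{\SKL}$ is $\lambda\theta\arctanh(\lambda\theta)$. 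The ratio in the definition of $\eta^{(m,ht,s)}_{\SKL}$ at this choice is $\lambda\,\arctanh(\lambda\theta)/\arctanh\theta\to\lambda^2$ as $\theta\downarrow0$.

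For the upper bound, the main step is a reduction to BSC inputs: it suffices to prove
\begin{equation}
C_{\SKL}\big((\BSC_{\Delta_1}\times\cdots\times\BSC_{\Delta_{r-1}})\circ B_{r,\lambda}\big)\ \le\ \lambda^2\sum_{i\in[r-1]}\theta_i\arctanh\theta_i\tag{$\star$}
\end{equation}
for all $\lambda\in[0,1]$ and $\theta_i=1-2\Delta_i\in[0,1]$, strictly when $0<\lambda<1$ and some $\theta_i>0$. Indeed, writing each $P_i=\int\BSC_\delta\,d\mu_i(\delta)$ by Lemma~\ref{lem:bms-mixture}, appending to the output of $(P_1\times\cdots\times P_{r-1})\circ B_{r,\lambda}$ the input-independent noise levels $\Delta_i\sim\mu_i$ only increases $C_{\SKL}$ (data processing for the SKL $f$-divergence applied to the forgetting map), and the $C_{\SKL}$ of the enlarged channel equals $\bE\big[C_{\SKL}((\BSC_{\Delta_1}\times\cdots)\circ B_{r,\lambda})\big]$ with $\Delta_i\sim\mu_i$ independent; since also $\lambda^2\sum_iC_{\SKL}(P_i)=\lambda^2\sum_i\bE_{\mu_i}[\theta_i\arctanh\theta_i]$, integrating $(\star)$ against $\mu_1\otimes\cdots\otimes\mu_{r-1}$ yields Proposition~\ref{prop:skl-multi-sdpi}, and strictness follows because a non-trivial $P_i$ has $\mu_i(\{\theta_i>0\})>0$. (The case $\lambda=1$ needs no reduction: $B_{r,1}$ is perfect repetition, $(P_1\times\cdots\times P_{r-1})\circ B_{r,1}=P_1\star\cdots\star P_{r-1}$, and $C_{\SKL}$ is additive under $\star$, so equality holds; this is why $\lambda<1$ is imposed in the strict assertion.)

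To prove $(\star)$ for $r=3,4$, I would use the explicit description derived above of $(\BSC_{\Delta_1}\times\cdots)\circ B_{r,\lambda}$ as a mixture of $2^{r-2}$ BSCs to write the left side of $(\star)$ as $g(\lambda)=\sum_x w_x(\lambda)\,\Theta_x(\lambda)\arctanh\Theta_x(\lambda)$, a sum of $2^{r-2}$ terms each of the form $\tfrac12\lambda p_x\arctanh\frac{\lambda p_x}{c_x+\lambda q_x}$ with $p_x,c_x>0$ and $q_x$ determined by $\theta_1,\dots,\theta_{r-1}$. One has $g(0)=0$ and --- by $\star$-additivity of $C_{\SKL}$ at $\lambda=1$, where the combined channel becomes $\BSC_{\Delta_1}\star\cdots\star\BSC_{\Delta_{r-1}}$ --- $g(1)=\sum_i\theta_i\arctanh\theta_i$, so $(\star)$ is exactly $g(\lambda)\le\lambda^2g(1)$, with equality at $\lambda\in\{0,1\}$. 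I would establish this by analyzing $H(\lambda):=\lambda^2g(1)-g(\lambda)$: one checks $H(0)=H(1)=0$, $H'(0)=0$, $H''(0)\ge0$ (using $\arctanh t-t\ge t^3/3$ together with AM--GM on the $\theta_i$), and $H'(1)\le0$ by a direct computation; the key point is that $H'$ is unimodal on $[0,1]$, which one reads off from the rational structure of the $\Theta_x(\lambda)$ --- the denominators $(c_x+\lambda q_x)^2-\lambda^2p_x^2$ entering $g'$ being concave quadratics in $\lambda$ on the relevant range precisely when $r\le4$ --- and this forces $H\ge0$ on $[0,1]$, with strict inequality on $(0,1)$ whenever some $\theta_i>0$.

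The main obstacle is this last step: establishing the sign and unimodality facts about $H$ and $H'$ explicitly for $r=4$ (four BSC summands and three free parameters $\theta_1,\theta_2,\theta_3$) and handling the boundary parameter values. The individual estimates needed are elementary ($\tanh t\le t$, $\arctanh t\le t/(1-t^2)$, convexity of $\arctanh$, AM--GM), but the case analysis is substantial; this is also exactly where $r\in\{3,4\}$ is genuinely used, since for $r\ge5$ one of the relevant quadratics loses concavity and $g(\lambda)>\lambda^2g(1)$ is expected for some parameters, so SKL contraction no longer yields the tight reconstruction threshold.
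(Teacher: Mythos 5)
Your overall architecture coincides with the paper's: the equality $\eta^{(m,ht,s)}_{\SKL}(B_{r,\lambda})=\lambda^2$ is obtained by (a) exhibiting channels whose ratio tends to $\lambda^2$ (your choice $P_1=\BSC_{(1-\theta)/2}$ with the remaining channels trivial is correct, and supplies the lower bound that the paper leaves implicit), and (b) reducing the upper bound, via the mixture representation of Lemma~\ref{lem:bms-mixture}, to the BSC-input inequality $C_{\SKL}((\BSC_{\Delta_1}\times\cdots\times\BSC_{\Delta_{r-1}})\circ B_{r,\lambda})\le\lambda^2\sum_{i}\theta_i\arctanh\theta_i$; your integration argument for (b) and the $\lambda=1$ additivity remark are both fine and match the paper.

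The gap is that you never actually prove the BSC-input inequality, and that inequality is the entire content of the proposition. Your plan --- set $g(\lambda)$ equal to its left side, put $H(\lambda)=\lambda^2 g(1)-g(\lambda)$, check $H(0)=H(1)=0$, $H'(0)=0$, $H''(0)\ge 0$, $H'(1)\le 0$, then invoke unimodality of $H'$ on $[0,1]$ --- stands or falls on that unimodality claim, which is asserted rather than argued: the boundary facts alone do not force $H\ge 0$, and the remark that certain denominators are concave quadratics in $\lambda$ does not control the sign changes of $H''$, which is a sum of $2^{r-2}$ terms involving $\arctanh$ of rational functions of $\lambda$ together with their first two derivatives, and the claim must hold uniformly over the free parameters $\theta_1,\ldots,\theta_{r-1}\in[0,1]$ (two of them for $r=3$, three for $r=4$). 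This is exactly the hard core of the result. For comparison, the paper's $r=3$ proof avoids any global analysis in $\lambda$: after symmetrizing, the $\arctanh$ addition formula reduces the key step (Lemma~\ref{lem:boht-tech-r3}) to the one-line polynomial inequality $\lambda(1-\lambda)(1-\lambda\theta_1^2)\theta_2^2\ge 0$, and convexity of $\arctanh$ then extracts the final factor of $\lambda$. For $r=4$ no comparably simple argument is known: the required four-term inequality is Lemma~\ref{lem:boht-r4}, which the authors could originally only state as a conjecture and which was later proved analytically in \cite{marwaha2023useful}. So as written your proposal re-poses the crux (especially for $r=4$) as an unverified analytic claim rather than proving it, and the strictness assertion \eqref{eqn:prop-skl-multi-sdpi-strict} is deferred in the same way.
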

Proof of Prop.~\ref{prop:skl-multi-sdpi} is deferred to Section~\ref{sec:skl-multi-sdpi}.
\begin{proof}[Proof of Theorem~\ref{thm:boht-non-recon-special}\ref{item:thm-boht-non-recon-special-r34}]
  If $(r-1)d\le 1$, then the BOHT model extincts almost surely (e.g., \cite[Prop.~5.4]{lyons2017probability}). This resolves the case $\lambda=1$. In the following, assume that $0\le \lambda<1$.

  For $0\le \lambda<1$, we have $C_{\SKL}(B_{r,\lambda}) < \infty$. Therefore Theorem~\ref{thm:boht-non-recon}\ref{item:thm-boht-non-recon:bms} together with Prop.~\ref{prop:skl-multi-sdpi} implies that non-reconstruction holds for $(r-1)d\lambda^2 < 1$.

  For the critical case $(r-1)d\lambda^2 = 1$ we need some extra argument. See Section~\ref{sec:skl-multi-sdpi:critical}.
\end{proof}

For Theorem~\ref{thm:boht-non-recon-special}\ref{item:thm-boht-non-recon-special-any-d}, we compute the $\chi^2$-contraction coefficients.
\begin{proposition}[$\chi^2$-contraction coefficient]
  \label{prop:chi2-multi-sdpi}
  Fix $r\in \bZ_{\ge 3}$. Then
  \begin{align} \label{eqn:prop-chi2-multi-sdpi}
    \eta^{(m,s)}_{\chi^2}(B_{r,\lambda}) = \sup_{0<\epsilon\le 1} f_{r,\lambda}(\epsilon),
  \end{align}
  where $f_{r,\lambda}$ is defined in \eqref{eqn:thm-boht-non-recon-special:chi2-f}.
\end{proposition}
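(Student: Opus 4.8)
The plan is to bound $\eta^{(m,s)}_{\chi^2}(B_{r,\lambda})=\sup_P \frac{C_{\chi^2}(P^{\times(r-1)}\circ B_{r,\lambda})}{(r-1)\,C_{\chi^2}(P)}$ (the supremum over BMS channels $P$) from both sides using the BSC mixture representation. Writing $P$ as a mixture of $\BSC_{\Delta}$ with $\theta=1-2\Delta$ the (input-independent, observed) mixture label, and using that an $f$-information is unchanged on average by revealing such a label, we get
\begin{align}
  C_{\chi^2}(P^{\times(r-1)}\circ B_{r,\lambda})=\bE_{\theta_1,\ldots,\theta_{r-1}\ \mathrm{i.i.d.}}\big[\,G(\theta_1,\ldots,\theta_{r-1})\,\big],\qquad C_{\chi^2}(P)=\bE[\theta^2]=:t,
\end{align}
where the $\theta_i$ have the law of the $\theta$-component of $P$ and $G(\theta_1,\ldots,\theta_{r-1}):=C_{\chi^2}\big((\BSC_{\Delta_1}\times\cdots\times\BSC_{\Delta_{r-1}})\circ B_{r,\lambda}\big)$ is the explicit quantity read off from the mixture-of-BSCs description derived just above the proposition (a sum over $x\in\{\pm\}^{r-1}$ with $x_1=+$). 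For the lower bound I would take $P=\BEC_{1-\epsilon}$, so $\theta_i\in\{0,1\}$ with $\bP[\theta_i=1]=\epsilon$ and $C_{\chi^2}(P)=\epsilon$; conditioning on the number $k$ of surviving coordinates, the resulting channel is a noisy copy of $X$ whose $\chi^2$-capacity is $\frac{\lambda^2}{\lambda+(1-\lambda)2^{1-k}}$ for $k\ge 1$ and $0$ for $k=0$, so $C_{\chi^2}(P^{\times(r-1)}\circ B_{r,\lambda})=\sum_{k\ge1}\binom{r-1}{k}\epsilon^{k}(1-\epsilon)^{r-1-k}\frac{\lambda^2}{\lambda+(1-\lambda)2^{1-k}}=(r-1)\,\epsilon\,f_{r,\lambda}(\epsilon)$, giving $\eta^{(m,s)}_{\chi^2}(B_{r,\lambda})\ge f_{r,\lambda}(\epsilon)$ for every $\epsilon\in(0,1]$.

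For the matching upper bound, the key claim is that $G$, viewed as a function of $(s_1,\ldots,s_{r-1})=(\theta_1^2,\ldots,\theta_{r-1}^2)$ on $[0,1]^{r-1}$, is separately convex in each $s_i$. Granting this, a coordinate-by-coordinate chord bound finishes the job: since each $s_i$ has mean $t$ under the i.i.d. law, convexity in $s_i$ gives $\bE_{\theta_i}[G]\le (1-t)\,G|_{s_i=0}+t\,G|_{s_i=1}$ pointwise in the remaining variables, and each such restriction of $G$ is again convex in the other coordinates, so iterating yields
\begin{align}
  \bE[G]\ \le\ \sum_{k=0}^{r-1}\binom{r-1}{k}t^{k}(1-t)^{r-1-k}\,G(\mathbf 1_k)\ =\ (r-1)\,t\,f_{r,\lambda}(t),
\end{align}
using the symmetry of $G$ and the vertex values $G(\mathbf 1_k)=\frac{\lambda^2}{\lambda+(1-\lambda)2^{1-k}}$ for $k\ge1$ and $G(\mathbf 1_0)=0$, computed exactly as in the lower bound. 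Dividing by $(r-1)t$ shows the ratio is at most $f_{r,\lambda}(t)\le\sup_{0<\epsilon\le1}f_{r,\lambda}(\epsilon)$, which together with the lower bound proves the proposition.

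The main obstacle is establishing the separate convexity of $G$ in each $s_i$. I would freeze $\theta_2,\ldots,\theta_{r-1}$ and factor the $\theta_1$-dependence of each summand as $\frac{\lambda^2}{2^{r-1}}\cdot\frac{(p+\theta_1 q)^2}{C+\theta_1\lambda p}$, where $p=a-b$, $q=a+b$ with $a=\prod_{i\ge2}(1+\theta_i x_i)\ge0$, $b=\prod_{i\ge2}(1-\theta_i x_i)\ge0$, and $C=\lambda q+2(1-\lambda)>0$ (using $\lambda<1$; note $C>\lambda q\ge\lambda|p|$, so the denominator never vanishes for $\theta_1\in[-1,1]$). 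Because relabelling a single output coordinate leaves $\chi^2$-capacity unchanged, $G$ is even in $\theta_1$, hence $G(\theta_1,\cdot)=\tfrac12\sum_x\big(h_x(\theta_1)+h_x(-\theta_1)\big)$ with $h_x$ the summand above; putting each symmetrized term over a common denominator collapses it to a ratio of two affine functions of $s_1$, of the form $\frac{\lambda^2}{2^{r-1}}\cdot\frac{p^2C+s_1 q(qC-2\lambda p^2)}{C^2-s_1\lambda^2 p^2}$, whose second derivative in $s_1$ is a positive multiple of $\lambda^2 p^2 C\,(Cq-\lambda p^2)^2\big/(C^2-s_1\lambda^2 p^2)^3\ge0$. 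Summing over $x$ and invoking symmetry gives convexity in each $s_i$. Finally, the boundary cases are disposed of separately: $\lambda=0$ makes both sides $0$, and for $\lambda=1$ the special BOHT hypertree is subcritical exactly when $(r-1)d\le1$ (as in the proof of Theorem~\ref{thm:boht-non-recon-special}\ref{item:thm-boht-non-recon-special-r34}), and a direct computation with $P=\BEC_{1-\epsilon}$ gives $\eta^{(m,s)}_{\chi^2}(B_{r,1})=1=\sup_\epsilon f_{r,1}(\epsilon)$.
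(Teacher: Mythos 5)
Your proof is correct, but the upper bound is obtained by a genuinely different route from the paper. The paper proves a more general statement (Prop.~\ref{prop:chi2-multi-sdpi-bms}) for an arbitrary binary symmetric kernel $B$: it invokes the extremality of the erasure channel in the less-noisy order among BMS channels of fixed $\chi^2$-capacity (Lemma~\ref{lem:bms-extremal-less-noisy}), the preservation of $\le_{\ln}$ under tensorization and composition, and the comparison theorem of \cite{makur2018comparison} to conclude $C_{\chi^2}(P^{\times(r-1)}\circ B)\le C_{\chi^2}(\BEC_{1-\epsilon}^{\times(r-1)}\circ B)$ with $\epsilon=C_{\chi^2}(P)$; Prop.~\ref{prop:chi2-multi-sdpi} is then just the evaluation of $f_{B_{r,\lambda}}=f_{r,\lambda}$, i.e.\ the same vertex values $\frac{\lambda^2}{\lambda+(1-\lambda)2^{1-i}}$ you compute. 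You instead prove BEC-extremality of this particular functional by hand: writing $C_{\chi^2}\big((\BSC_{\Delta_1}\times\cdots\times\BSC_{\Delta_{r-1}})\circ B_{r,\lambda}\big)$ as a function of $s_i=\theta_i^2$ and checking separate convexity via the symmetrized Möbius form $\frac{Cp^2+s_1q(Cq-2\lambda p^2)}{C^2-s_1\lambda^2p^2}$, whose second derivative is indeed $\frac{2\lambda^2p^2C(Cq-\lambda p^2)^2}{(C^2-s_1\lambda^2p^2)^3}\ge 0$ (your algebra checks out, using $C>\lambda q\ge\lambda|p|$ for $\lambda<1$), and then a coordinate-by-coordinate chord bound with i.i.d.\ $s_i$ of mean $t=C_{\chi^2}(P)$ reduces to the cube vertices, which are exactly the BEC erasure patterns. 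What each approach buys: the paper's argument is shorter given the cited channel-comparison machinery and applies verbatim to any binary symmetric $B$ (this generality is used in Corollary~\ref{coro:boht-non-recon-bms}) with no special treatment of $\lambda\in\{0,1\}$; yours is elementary and self-contained for $B_{r,\lambda}$, at the cost of an explicit algebraic verification, the separate handling of $\lambda=1$ (where, note, your BEC computation only gives the lower bound $\eta^{(m,s)}_{\chi^2}(B_{r,1})\ge 1$; the matching upper bound comes from the subadditivity-based fact $\eta^{(m,s)}_{\chi^2}\le 1$ established in Section~\ref{sec:multi-sdpi}, and the remark about subcriticality of the hypertree is irrelevant here), and the standard but implicit fact that $\chi^2$-capacity averages over an observable, input-independent mixture label, which is exactly the $\Delta$-component representation the paper also relies on.
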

Proof of Prop.~\ref{prop:chi2-multi-sdpi} is deferred to Section~\ref{sec:chi2-multi-sdpi}.
\begin{proof}[Proof of Theorem~\ref{thm:boht-non-recon-special}\ref{item:thm-boht-non-recon-special-any-d}]
  Follows from Theorem~\ref{thm:boht-non-recon}\ref{item:thm-boht-non-recon-fixed:bms} and Prop.~\ref{prop:chi2-multi-sdpi}.
\end{proof}

Interestingly, RHS of \eqref{eqn:prop-chi2-multi-sdpi} can be computed exactly when $\lambda$ is not too small.
\begin{lemma} \label{lem:chi2-multi-sdpi-large-lambda}
  Fix $r\in \bZ_{\ge 3}$ and $\lambda\in \left[\frac 15, 1\right]$.
  For all $0<\epsilon\le 1$, we have $f_{r,\lambda}(\epsilon) < \lambda^2$.
\end{lemma}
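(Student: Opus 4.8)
The plan is to reduce the claimed inequality to an elementary binomial estimate. Write $n:=r-1\ge 2$ and set $g_i:=\tfrac1{\lambda+(1-\lambda)2^{1-i}}$, so that $\tfrac{\lambda^2}{\lambda+(1-\lambda)2^{1-i}}=\lambda^2 g_i$ and hence $f_{r,\lambda}(\epsilon)=\tfrac{\lambda^2}{n}S(\epsilon)$ where
\[
  S(\epsilon):=\sum_{i=1}^{n}\binom{n}{i}(1-\epsilon)^{n-i}\epsilon^{i-1}g_i .
\]
Since $\lambda\ge\tfrac15>0$, the assertion $f_{r,\lambda}(\epsilon)<\lambda^2$ is equivalent to $S(\epsilon)<n$, and this is what I would prove for every $\epsilon\in(0,1]$.

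The key pointwise estimate is: for $\lambda\ge\tfrac15$ one has $g_i\le i$ for all integers $i\ge 1$, with equality only at $i=1$. Indeed $g_i\le i$ is equivalent to $\phi(i):=i\lambda+i(1-\lambda)2^{1-i}\ge 1$, and $\phi(1)=1$. For $i\ge 2$, $\phi(i)=i2^{1-i}+i\lambda(1-2^{1-i})$ is affine and strictly increasing in $\lambda$ (slope $i(1-2^{1-i})>0$), so it suffices to check $\lambda=\tfrac15$, where $\phi(i)=\tfrac i5\bigl(1+2^{3-i}\bigr)$; this equals $\tfrac65$ for $i\in\{2,3,4\}$ and is at least $\tfrac i5\ge 1$ for $i\ge 5$. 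Hence $\phi(i)>1$, i.e.\ $g_i<i$ strictly, for every $i\ge 2$ whenever $\lambda\ge\tfrac15$.

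Now plug this into $S$. Using the identity $\sum_{i=1}^{n}i\binom{n}{i}\epsilon^i(1-\epsilon)^{n-i}=n\epsilon$, we get, for $\epsilon>0$,
\[
  \epsilon\,S(\epsilon)=\sum_{i=1}^{n}\binom{n}{i}\epsilon^i(1-\epsilon)^{n-i}g_i\ \le\ \sum_{i=1}^{n}i\binom{n}{i}\epsilon^i(1-\epsilon)^{n-i}=n\epsilon ,
\]
so $S(\epsilon)\le n$. To upgrade to strict inequality: for $0<\epsilon<1$ the $i=2$ summand has strictly positive binomial weight (using $n\ge 2$) and $g_2=\tfrac{2}{1+\lambda}<2$, so the middle inequality is strict; for $\epsilon=1$ only the $i=n$ term survives and $S(1)=g_n<n$. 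Thus $S(\epsilon)<n$ for all $\epsilon\in(0,1]$, which is the claim.

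The content is entirely routine; the only points requiring a little care are the finite case analysis in the pointwise bound $g_i\le i$ — in particular the observation that $\tfrac i5(1+2^{3-i})=\tfrac65$ for $i=2,3,4$ and is $\ge1$ for all larger $i$ — together with the endpoint bookkeeping for strictness at $\epsilon=1$ and the fact that $S(\epsilon)\to n$ as $\epsilon\to0^+$ (so no uniform gap below $\lambda^2$ is available, consistent with Prop.~\ref{prop:chi2-multi-sdpi} giving $\eta^{(m,s)}_{\chi^2}(B_{r,\lambda})=\lambda^2$). I would also note, as the paper remarks, that because $i$ only ranges over $\{1,\dots,r-1\}$ one may replace $\tfrac15$ by the smaller threshold $\max_{2\le i\le r-1}\tfrac{1-i2^{1-i}}{i(1-2^{1-i})}$, which equals $\tfrac17$ when $r=5$.
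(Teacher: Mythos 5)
Your proof is correct, but it takes a genuinely different route from the paper. The paper proves the lemma by calculus: it notes $f_{r,\lambda}(0)=\lambda^2$, computes $f_{r,\lambda}'(\epsilon)$, reindexes the sum so that consecutive coefficients pair up, and shows the resulting brackets are nonpositive for $\lambda\ge\frac15$ via the inequality $\frac{i-1}{\lambda+(1-\lambda)2^{1-i}}\le\frac{i}{\lambda+(1-\lambda)2^{2-i}}$ (strict for $i\in\{2\}\cup\bZ_{\ge5}$), concluding that $f_{r,\lambda}$ is decreasing on $[0,1]$. You instead avoid differentiation entirely: writing $\epsilon f_{r,\lambda}(\epsilon)=\frac{\lambda^2}{n}\sum_{i=1}^n\binom{n}{i}\epsilon^i(1-\epsilon)^{n-i}g_i$ with $n=r-1$, you compare termwise with the binomial mean via the pointwise bound $g_i\le i$ (reduced, by monotonicity in $\lambda$, to the finite check $\frac i5(1+2^{3-i})\ge1$, strict for $i\ge2$), and handle strictness through the $i=2$ term for $\epsilon<1$ and through $g_n<n$ at $\epsilon=1$. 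Both arguments boil down to an elementary one-parameter check at $\lambda=\frac15$; yours is more elementary and makes transparent why no uniform gap below $\lambda^2$ exists as $\epsilon\to0^+$, while the paper's derivative argument additionally yields monotonicity of $f_{r,\lambda}$ on $[0,1]$. One small caveat on your closing remark: at the improved per-$r$ threshold $\lambda=\max_{2\le i\le r-1}\frac{1-i2^{1-i}}{i(1-2^{1-i})}$ (e.g.\ $\lambda=\frac17$ for $r=5$) the bound $g_{r-1}\le r-1$ becomes an equality, so at $\epsilon=1$ one only gets $f_{r,\lambda}(1)=\lambda^2$ rather than a strict inequality; this is consistent with the paper, whose corresponding remark states the non-strict bound $f_{r,\lambda}(\epsilon)\le\lambda^2$, but it means the strict version of the lemma does not extend verbatim to that endpoint.
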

Proof of Lemma~\ref{lem:chi2-multi-sdpi-large-lambda} is deferred to Section~\ref{sec:chi2-multi-sdpi}.
\begin{proof}[Proof of Theorem~\ref{thm:boht-non-recon-special}\ref{item:thm-boht-non-recon-special-small-d}]
  Prop.~\ref{prop:chi2-multi-sdpi} together with Lemma~\ref{lem:chi2-multi-sdpi-large-lambda} implies that for $\lambda\in \left[\frac 15, 1\right]$ we have
  \begin{align}
    \eta^{(m,s)}_{\chi^2}(B_{r,\lambda}) = \lambda^2.
  \end{align}
  (For the lower bound, take $\epsilon\to 0^+$ in \eqref{eqn:prop-chi2-multi-sdpi}.)
  Then Theorem~\ref{thm:boht-non-recon}\ref{item:thm-boht-non-recon-fixed:bms} implies that non-reconstruction holds for $(r-1)d\lambda^2<1$.

  For the critical case $(r-1)d\lambda^2 = 1$ we need some extra argument. See Section~\ref{sec:chi2-multi-sdpi:critical}.
\end{proof}

\section{Discussions} \label{sec:discussion}
For the special BOHT model, we have left the $r=5,6$ case open.
Our preliminary computations suggest that for $r=5,6$, there exists an absolute constant $d_0\in \bR_{\ge 0}$ such that the BOHT model has non-reconstruction when $d\ge d_0$ and $(r-1)d\lambda^2\le 1$.
We believe that a generalization of Sly's method~\cite{sly2011reconstruction,mossel2022exact} can be used to prove this. In Sly's method, we compute the first few orders of the BP recursion formula. Combined with Gaussian approximation this would imply contraction of $\chi^2$-capacity.
One technical challenge is that in the BOHT case we need a two-step application of Sly's method, in contrast with previous works.

\ifdefined\isarxiv
\section*{Acknowledgments}
We thank Kunal Marwaha for resolving a conjecture in a preliminary version of the paper. We thank the anonymous reviewers for helpful comments.

Research was sponsored by the United States Air Force Research Laboratory and the United States Air Force Artificial Intelligence Accelerator and was accomplished under Cooperative Agreement Number FA8750-19-2-1000. The views and conclusions contained in this document are those of the authors and should not be interpreted as representing the official policies, either expressed or implied, of the United States Air Force or the U.S. Government. The U.S. Government is authorized to reproduce and distribute reprints for Government purposes notwithstanding any copyright notation herein.
\else
\acks{We thank Kunal Marwaha for resolving a conjecture in a preliminary version of the paper. We thank the anonymous reviewers for helpful comments.

Research was sponsored by the United States Air Force Research Laboratory and the United States Air Force Artificial Intelligence Accelerator and was accomplished under Cooperative Agreement Number FA8750-19-2-1000. The views and conclusions contained in this document are those of the authors and should not be interpreted as representing the official policies, either expressed or implied, of the United States Air Force or the U.S. Government. The U.S. Government is authorized to reproduce and distribute reprints for Government purposes notwithstanding any copyright notation herein.}
\fi

\ifdefined\isarxiv
\bibliographystyle{alpha}
\fi
\bibliography{ref}
\appendix

\section{Non-reconstruction for the fixed-hypertree BOHT model} \label{sec:boht-non-recon-fixed}
In this section we give a fixed-hypertree version of Theorem~\ref{thm:boht-non-recon}.
Consider the model $\BOHT(T,q,r,\pi,B)$ where $T$ is a fixed rooted $r$-uniform linear hypertree.
Recall the definition of the branching number.
\begin{definition}[Branching number \cite{lyons1990random}]\label{defn:br-tree}
	Let $T$ be a possibly infinite tree rooted at $\rho$.
	Define a flow to be a function $f: V(T) \to \bR_{\ge 0}$ such that
	for every vertex $u$, we have
	\begin{align}
		f_u = \sum_{v\in c(u)} f_v.
	\end{align}
	Define $\br(T)$ to be the $\sup$ of all numbers $\lambda$ such that there exists a flow $f$ with
	$f_\rho > 0$, and $f_u \le \lambda^{-d(u)}$ for all vertices $u$, where $d(u)$ denotes the depth of $u$ (i.e., distance to $\rho$).
\end{definition}
For an $r$-uniform linear hypertree, we can split every downward hyperedge into $(r-1)$ downward edges, and apply the above definition. In this way we extend the definition of branching number to linear hypertrees.
\begin{theorem}[Non-reconstruction for BOHT] \label{thm:boht-non-recon-fixed}
  Consider the model $\BOHT(T,q,r,\pi,B)$.
  \begin{enumerate}[label=(\roman*)]
    \item \label{item:thm-boht-non-recon-fixed:general}
    If
    \begin{align}
      \label{eqn:thm-boht-non-recon-fixed:general-kl} &\br(T) \eta^{(m,ht)}_{\KL}(\pi, B) < 1,\\
      \label{eqn:thm-boht-non-recon-fixed:general-skl} \text{or}\quad & \br(T) \eta^{(m,ht)}_{\SKL}(\pi, B)< 1, I_{\SKL}(\pi, B) < \infty,\text{and $T$ has bounded maximum degree},
    \end{align}
    then reconstruction is impossible.
    \item \label{item:thm-boht-non-recon-fixed:bms}
    Suppose the BOHT model is binary symmetric. If
    \begin{align}
      \label{eqn:thm-boht-non-recon-fixed:bms-kl} & \br(T) \eta^{(m,ht,s)}_{\KL}(B) < 1,\\
      \label{eqn:thm-boht-non-recon-fixed:bms-chi2} \text{or}\quad & \br(T) \eta^{(m,ht,s)}_{\chi^2}(B)< 1,\\
      \label{eqn:thm-boht-non-recon-fixed:bms-skl} \text{or}\quad & \br(T) \eta^{(m,ht,s)}_{\SKL}(B)< 1, C_{\SKL}(B) < \infty,\text{and $T$ has bounded maximum degree},
    \end{align}
    then reconstruction is impossible.
  \end{enumerate}
\end{theorem}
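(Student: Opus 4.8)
The plan is to run the information-contraction scheme behind Theorem~\ref{thm:boht-non-recon}, replacing the role of the expected offspring count $d$ by the branching number $\br(T)$, in the spirit of Lyons's max-flow--min-cut theory for trees~\cite{lyons1990random}. Throughout, split each downward hyperedge of $T$ into $r-1$ downward edges so $T$ becomes an ordinary rooted tree; write $c(v)$ for the children of $v$, $d(v)$ for its depth, $T(v)$ for the subtree at $v$, and recall $\br(T)$ is computed with respect to this edge-split. Fix one of the admissible $f$-divergences and let $\eta$ be the corresponding heterogeneous multi-terminal contraction coefficient ($\eta^{(m,ht)}_f(\pi,B)$ in part~\ref{item:thm-boht-non-recon-fixed:general}, $\eta^{(m,ht,s)}_f(B)$ in part~\ref{item:thm-boht-non-recon-fixed:bms}). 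Since $T$ is deterministic, observing $(T_k,\sigma_{L_k})$ is the same as observing $\sigma_{L_k}$, and $I(X;Y)\le I_f(X;Y)$ for $f\in\{\KL,\chi^2,\SKL\}$, so it suffices to prove $\lim_{k\to\infty}I_f(\sigma_\rho;\sigma_{L_k})=0$.

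\textbf{One-step contraction along the tree.} For a vertex $v$ and $k\ge d(v)$ set $J_v^{(k)}:=I_f(\sigma_v;\sigma_{L_k\cap T(v)})$ (use $C_f$ in the binary-symmetric case). Conditionally on $\sigma_v$, the labels broadcast into the distinct downward hyperedges of $v$ are independent, and within one hyperedge $\{v,w_1,\dots,w_{r-1}\}$ they are generated by $B(\cdot\mid\sigma_v)$ and then pushed down independently into $T(w_1),\dots,T(w_{r-1})$. Hence $\sigma_v\mapsto\sigma_{L_k\cap T(v)}$ is a $\star$-convolution, over the downward hyperedges of $v$, of channels of the form $(P_{w_1}\times\cdots\times P_{w_{r-1}})\circ B$ with $P_w$ the channel $\sigma_w\mapsto\sigma_{L_k\cap T(w)}$. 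Applying subadditivity of $I_f$ under $\star$-convolution (valid for KL, SKL, and for $\chi^2$ in the binary-symmetric case by~\cite{abbe2019subadditivity}), then the heterogeneous multi-terminal SDPI on each hyperedge, gives
\begin{align}
  J_v^{(k)}\ \le\ \eta\sum_{w\in c(v)}J_w^{(k)}.
\end{align}

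\textbf{Cutset iteration.} Since $\br(T)\eta<1$, pick $\lambda$ with $\br(T)<\lambda<1/\eta$; as $\lambda>\br(T)$, the flow/cutset duality for trees yields, for every $\epsilon>0$, a cutset $\Pi$ (an antichain meeting every infinite ray from $\rho$) with $\sum_{w\in\Pi}\lambda^{-d(w)}<\epsilon$, and one may take $k$ large enough that every vertex of $L_k$ lies strictly below $\Pi$ (so that $\sigma_\rho-\sigma_\Pi-\sigma_{L_k}$). Iterating the one-step bound from $\rho$ and stopping the expansion at the vertices of $\Pi$ and at leaves of $T$ above $\Pi$ (which contribute $0$ once $k$ exceeds their depth) gives, for all such $k$,
\begin{align}
  I_f(\sigma_\rho;\sigma_{L_k})\ =\ J_\rho^{(k)}\ \le\ \sum_{w\in\Pi}\eta^{d(w)}J_w^{(k)}\ \le\ \kappa\sum_{w\in\Pi}\lambda^{-d(w)}\ <\ \kappa\,\epsilon,
\end{align}
using $\eta^{d(w)}\le\lambda^{-d(w)}$ (as $\eta\lambda<1$), where $\kappa<\infty$ is a uniform bound on $J_w^{(k)}$ over $w\in\Pi$ and large $k$. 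For the KL and $\chi^2$ cases data processing supplies $\kappa$ at once: $J_w^{(k)}\le H(\pi)\le\log q$ and $J_w^{(k)}\le C_{\chi^2}(\Id)=1$. For the SKL cases $I_{\SKL}(\sigma_w;\sigma_w)=\infty$, so one peels one more broadcast step: for $k>d(w)$, by data processing and additivity of $I_{\SKL}$ under $\star$-convolution, $J_w^{(k)}\le I_{\SKL}(\sigma_w;\sigma_{c(w)})=t_w\,I_{\SKL}(\pi,B)\le\Delta\,I_{\SKL}(\pi,B)$, with $t_w$ the number of downward hyperedges at $w$ and $\Delta$ the maximum degree of $T$. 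This is exactly where the hypotheses $I_{\SKL}(\pi,B)<\infty$ (equivalently $C_{\SKL}(B)<\infty$) and bounded maximum degree enter; they also make the denominators $\sum_i I_{\SKL}(\pi,P_{w_i})$ in the one-step bound finite, so the SKL multi-terminal SDPI is applicable there. Sending $k\to\infty$ and then $\epsilon\to0$ gives $\lim_k I_f(\sigma_\rho;\sigma_{L_k})=0$, hence $I(\sigma_\rho;\sigma_{L_k})\to0$, i.e.\ reconstruction is impossible.

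\textbf{Expected main obstacle.} There is no new conceptual ingredient — this is the cutset/max-flow--min-cut refinement of the proof of Theorem~\ref{thm:boht-non-recon}. The work is bookkeeping: producing the cutsets $\Pi$ and checking that $L_k$ eventually lies below them so the iteration terminates at $\Pi$, and, in the SKL cases, controlling the boundary information through one additional broadcast step, which is precisely what necessitates the finiteness and bounded-degree hypotheses.
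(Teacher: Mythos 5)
Your proof is correct, but it takes the dual route to the one in the paper. You use the cutset side of Lyons's max-flow--min-cut duality: from $\br(T)\eta<1$ you extract cutsets $\Pi$ with $\sum_{w\in\Pi}\lambda^{-d(w)}$ arbitrarily small, iterate the one-step bound $J_v^{(k)}\le\eta\sum_{w\in c(v)}J_w^{(k)}$ (which is exactly the paper's inequality, obtained from $\star$-convolution subadditivity plus the heterogeneous multi-terminal SDPI per hyperedge) down to $\Pi$, and finish with a uniform bound $\kappa$ on the boundary terms, handling SKL by peeling one extra broadcast step via $I_{\SKL}(\sigma_w;\sigma_{c(w)})=t_w I_{\SKL}(\pi,B)$, precisely as the paper does in its SKL case. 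The paper instead stays on the flow side of the duality: it defines $a_u$ from the \emph{limiting} information at $u$, observes $a_u\le\sum_{v\in c(u)}a_v$ and $a_u\le\eta^{d(u)}$, renormalizes $a$ into an honest flow $b\le a$, and concludes $a_\rho=0$ directly from the flow definition of $\br(T)$ --- so it never invokes max-flow--min-cut for infinite trees, whereas your argument does (a legitimate appeal to \cite{lyons1990random}), and in exchange you avoid the flow-normalization construction and work at finite $k$ without discussing limits. One step you assert without justification deserves a sentence: that the iteration terminates at $\Pi$ and that for $k$ large every vertex of $L_k$ has an ancestor-or-self in $\Pi$ (vertices on finite branches never crossing $\Pi$ are the worry). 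This holds because the set $A$ of vertices not separated from $\rho$ by $\Pi$ contains no infinite ray, hence is finite by K\"onig's lemma when $T$ is locally finite; children of $A$-vertices lie in $A\cup\Pi$, so the expansion visits only finitely many vertices and the leftover depth-$k$ boundary inside $A$ vanishes for $k>\max_{a\in A}d(a)$. (Local finiteness is implicitly needed by the paper's proof as well, for the limit interchange in $a_u\le\sum_{v\in c(u)}a_v$, so this is a shared hypothesis rather than a defect of your approach.)
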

\begin{proof}
  The proof is a generalization of the argument from \cite{gu2020non}.

  \textbf{Part \ref{item:thm-boht-non-recon-fixed:general}, mutual information:}
  For any vertex $u$, let $L_{u,k}$ denote the set of descendants of $u$ at distance $k$ to $\rho$.
  Define
  \begin{align} \label{eqn:proof-thm-boht-non-recon-fixed:general:defn-a}
    a_u := H(\pi)^{-1} \left( \eta^{(m,ht)}_{\KL}(\pi, B) \right)^{d(u)} \lim_{k\to \infty} I(\sigma_u; \sigma_{L_{u,k}}).
  \end{align}
  By DPI, $I(\sigma_u; \sigma_{L_{u,k}})$ is non-increasing for $k\ge d(u)$, so the limit exists.

  Let $\gamma(u)$ denote the set of downward hyperedges of $u$ and $c(u)$ denotes the set of children of $u$.
  For any $e=\{u,v_1,\ldots,v_{r-1}\}\in \gamma(u)$, we have a diagram as in Figure~\ref{fig:proof-thm-boht-non-recon-fixed}, where $P_{\sigma_u}=\pi$, $P_i = P_{\sigma_{L_{v_i,k}} | \sigma_{v_i}}$.
  \begin{figure}[ht]
    \centering
    \includegraphics{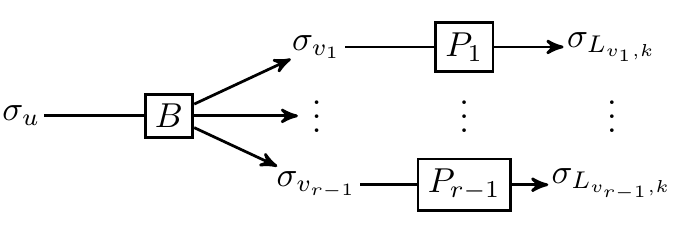}
    \caption{Apply multi-terminal SDPI to BOHT with a fixed hypertree}
    \label{fig:proof-thm-boht-non-recon-fixed}
  \end{figure}
  Define $L_{e\backslash u,k} := \bigcup_{i\in [r-1]} L_{v_i,k}$.
  By definition of multi-terminal contraction coefficients, we have
  \begin{align}
    I(\sigma_u; \sigma_{L_{e\backslash u,k}})
    \le \eta^{(m,ht)}_{\KL}(\pi, B) \sum_{i\in [r-1]} I(\sigma_{v_i}; \sigma_{L_{v_i,k}}).
  \end{align}
  Summing over all $e\in \gamma(u)$ and using subadditivity, we have
  \begin{align} \label{eqn:proof-thm-boht-non-recon-fixed:general:step}
    I(\sigma_u; \sigma_{L_{u,k}})
    \le \sum_{e\in \gamma(u)} I(\sigma_u; \sigma_{L_{e\backslash u,k}})
    \le \eta^{(m,ht)}_{\KL}(\pi, B) \sum_{v\in c(u)} I(\sigma_v; \sigma_{L_{v,k}}).
  \end{align}
  Comparing \eqref{eqn:proof-thm-boht-non-recon-fixed:general:defn-a} and \eqref{eqn:proof-thm-boht-non-recon-fixed:general:step} we see that
  \begin{align}
    a_u \le \sum_{v\in c(u)} a_v.
  \end{align}
  By definition, we have
  \begin{align}
    a_u \le \left( \eta^{(m,ht)}_{\KL}(\pi, B) \right)^{d(u)}.
  \end{align}
  However, $a$ is not a flow yet. We define a flow $b$ from $a$.
  For a vertex $u$, let $u_0=\rho, \ldots, u_\ell=u$ be the shortest path from $\rho$ to $u$.
  Define
  \begin{align}
    b_u := a_u \prod_{0\le j\le \ell-1} \frac{a_{u_j}}{\sum_{v\in c(u_j)} a_v}.
  \end{align}
  (If the denominator is zero, let $b_u=0$.) Then we have
  \begin{align}
    b_u = \sum_{v\in c(u)} b_v, \quad \text{and} \quad b_u \le a_u \le \left( \eta^{(m,ht)}_{\KL}(\pi, B) \right)^{d(u)}.
  \end{align}
  By definition of branching number, we have $b_\rho=0$. Therefore
  \begin{align}
    \lim_{k\to \infty} I(\sigma_\rho; \sigma_{L_k})=0
  \end{align}
  and non-reconstruction holds.

  \textbf{Part \ref{item:thm-boht-non-recon-fixed:general}, SKL information:}
  Suppose every vertex $u$ has at most $\gamma_{\max}$ downward hyperedges. Then
  \begin{align} \label{eqn:proof-thm-boht-non-recon-fixed:general:max-skl}
    \lim_{k\to \infty} I_{\SKL}(\sigma_u; \sigma_{L_{u,k}})
    \le I_{\SKL}(\sigma_u; \sigma_{L_{u,d(u)+1}})
    = |\gamma(u)| I_{\SKL}(\pi, B)
    \le \gamma_{\max} I_{\SKL}(\pi, B) < \infty.
  \end{align}
  We define
  \begin{align}
    a_u := \left(\gamma_{\max} I_{\SKL}(\pi, B)\right)^{-1} \left( \eta^{(m,ht)}_{\SKL}(\pi, B) \right)^{d(u)} \lim_{k\to \infty} I_{\SKL}(\sigma_u; \sigma_{L_{u,k}}).
  \end{align}
  By \eqref{eqn:proof-thm-boht-non-recon-fixed:general:max-skl}, we have
  \begin{align}
    a_u \le \left( \eta^{(m,ht)}_{\SKL}(\pi, B) \right)^{d(u)}.
  \end{align}
  The rest of the proof is similar to the mutual information case.

  \textbf{Part \ref{item:thm-boht-non-recon-fixed:bms}, KL and SKL capacity:}
  In this case, channels appeared in the above proof (e.g., $\sigma_u \mapsto \sigma_{L_{u,k}}$) are all BMS channels. Using the same proof with BMS version of multi-terminal contraction coefficients leads to the desired result.

  \textbf{Part \ref{item:thm-boht-non-recon-fixed:bms}, $\chi^2$-capacity:}
  Note that $C_{\chi^2}(P) \le 1$ for all BMS channels $P$. So we can define
  \begin{align}
    a_u := \left( \eta^{(m,ht,s)}_{\chi^2}(B) \right)^{d(u)} \lim_{k\to \infty} C_{\chi^2}(\sigma_{L_{u,k}})
  \end{align}
  and it satisfies
  \begin{align}
    a_u \le \left( \eta^{(m,ht,s)}_{\chi^2}(\pi, B) \right)^{d(u)}.
  \end{align}
  The rest of the proof is similar to the previous cases.
\end{proof}

\section{Computation of SKL contraction coefficients} \label{sec:skl-multi-sdpi}
In this section we prove Prop.~\ref{prop:skl-multi-sdpi}, which says that for any BMS channels $P_1,\ldots,P_{r-1}$ we have
\begin{align} \label{eqn:skl-contraction-edge}
  C_{\SKL}((P_1\times \cdots \times P_{r-1}) \circ B_{r,\lambda}) \le \lambda^2 \sum_{i\in [r-1]} C_{\SKL}(P_i).
\end{align}

By BSC mixture representation of BMS channels, \eqref{eqn:skl-contraction-edge} is equivalent to
\begin{align} \label{eqn:skl-contraction-edge-bsc}
  C_{\SKL}\left((\BSC_{\Delta_1} \times \cdots \times \BSC_{\Delta_{r-1}}) \circ B_{r,\lambda}\right) \le \lambda^2 \sum_{i\in [r-1]} C_{\SKL}(\BSC_{\Delta_i})
\end{align}
for all $\Delta_1,\ldots,\Delta_{r-1} \in \left[0,\frac 12\right]$.

\subsection{Case \texorpdfstring{$r=3$}{r=3}} \label{sec:skl-multi-sdpi:r3}
We first prove the $r=3$ case.
\begin{lemma} \label{lem:skl-contraction-edge-bsc-r3}
For any $\Delta_1,\Delta_2\in\left[0,\frac 12\right]$, we have
\begin{align} \label{eqn:skl-contraction-edge-bsc-r3}
  C_{\SKL}((\BSC_{\Delta_1} \times \BSC_{\Delta_2}) \circ B_{r,\lambda}) \le \lambda^2 (C_{\SKL}(\BSC_{\Delta_1}) + C_{\SKL}(\BSC_{\Delta_2})).
\end{align}
Furthermore, the inequality is strict when $0<\lambda<1$ and $\min\{\Delta_1,\Delta_2\}<\frac 12$.
\end{lemma}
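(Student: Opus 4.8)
The plan is to reduce \eqref{eqn:skl-contraction-edge-bsc-r3} to a one-parameter inequality in $\lambda$ and then settle it by analysing the shape of the resulting function. By the explicit computation just before the lemma, $(\BSC_{\Delta_1}\times\BSC_{\Delta_2})\circ B_{3,\lambda}$ is the mixture of two $\BSC$s with weights $w_\pm=\frac12(1\pm\lambda\theta_1\theta_2)$ and $\theta$-parameters $u_\pm=\frac{\lambda(\theta_1\pm\theta_2)}{1\pm\lambda\theta_1\theta_2}$, where $\theta_i:=1-2\Delta_i$. Writing $g(\theta):=\theta\arctanh\theta$ (so $C_{\SKL}(\BSC_\Delta)=g(\theta)$ and the left side of \eqref{eqn:skl-contraction-edge-bsc-r3} equals $w_+g(u_+)+w_-g(u_-)$) and using $w_\pm u_\pm=\tfrac12\lambda(\theta_1\pm\theta_2)$, the inequality \eqref{eqn:skl-contraction-edge-bsc-r3} becomes, for $\lambda>0$,
\begin{align*}
  H(\lambda) &:= 2\lambda\bigl(\theta_1\arctanh\theta_1+\theta_2\arctanh\theta_2\bigr)\\
    &\qquad\qquad -(\theta_1+\theta_2)\arctanh u_+-(\theta_1-\theta_2)\arctanh u_-\ \ge\ 0
\end{align*}
(the case $\lambda=0$ is trivial; if $\theta_1$ or $\theta_2$ equals $1$ the right side of \eqref{eqn:skl-contraction-edge-bsc-r3} is $+\infty$ and there is nothing to prove, so assume $\theta_1,\theta_2\in[0,1)$; by symmetry assume $\theta_1\ge\theta_2$). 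Applying the $\arctanh$ addition and subtraction formulas gives $\arctanh u_+\pm\arctanh u_-=\arctanh\frac{2\lambda\alpha(1-\lambda\beta^2)}{1+\lambda^2(\alpha^2-\beta^2-\alpha^2\beta^2)}$ with $(\alpha,\beta)=(\theta_1,\theta_2)$ for the $+$ combination and $(\alpha,\beta)=(\theta_2,\theta_1)$ for the $-$ one, so that $H(\lambda)=\theta_1 g_{\theta_1,\theta_2}(\lambda)+\theta_2 g_{\theta_2,\theta_1}(\lambda)$ where, for $x,y\in[0,1)$,
\[
  g_{x,y}(\lambda):=2\lambda\arctanh x-\arctanh\frac{2\lambda x(1-\lambda y^2)}{1+\lambda^2(x^2-y^2-x^2y^2)} .
\]
Hence it suffices to prove $g_{x,y}\ge 0$ on $[0,1]$ for all $x,y\in[0,1)$.

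I would then record the boundary data of $g:=g_{x,y}$. Direct substitution gives $g(0)=0$; at $\lambda=1$ the fraction collapses to $\frac{2x}{1+x^2}$ (since $1+x^2-y^2-x^2y^2=(1+x^2)(1-y^2)$), so $g(1)=2\arctanh x-2\arctanh x=0$ (equivalently: $B_{3,1}$ is the duplication kernel and $C_{\SKL}$ is additive under $\star$-convolution); expanding the fraction to first order gives $g'(0)=2(\arctanh x-x)\ge 0$; and a short computation gives $g'(1)=2\arctanh x-\frac{2x}{(1-x^2)(1-y^2)}\le 0$ (using $\arctanh x\le\frac{x}{1-x^2}$). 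The remaining — and only substantial — ingredient is a shape statement: there is $\lambda^*\in[0,1]$ such that $g$ is convex on $[0,\lambda^*]$ and concave on $[\lambda^*,1]$ (i.e.\ $g''$ changes sign at most once on $(0,1)$, from nonnegative to nonpositive; note this forces $\lambda^*<1$ whenever $x>0$, since $g$ cannot be convex throughout when $g'(0)\ge 0\ge g'(1)$ unless $g\equiv 0$). Granting this, $g'$ is unimodal with $g'(0)\ge 0$, so either $g'\ge 0$ on $[0,1]$ — in which case $g\equiv 0$ because $g(0)=g(1)=0$ — or $g'$ is positive then negative with a single zero $\lambda_0$, in which case $g$ increases from $g(0)=0$ on $[0,\lambda_0]$ and decreases to $g(1)=0$ on $[\lambda_0,1]$; either way $g\ge 0$ on $[0,1]$, proving \eqref{eqn:skl-contraction-edge-bsc-r3}. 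For the strict inequality when $0<\lambda<1$ and $\min\{\Delta_1,\Delta_2\}<\tfrac12$: then $\theta_1=\max\{\theta_1,\theta_2\}>0$, so $g_{\theta_1,\theta_2}'(0)=2(\arctanh\theta_1-\theta_1)>0$, hence in the dichotomy above $g_{\theta_1,\theta_2}$ is not identically $0$ and therefore $g_{\theta_1,\theta_2}(\lambda)>0$ for every $\lambda\in(0,1)$; combined with $\theta_1>0$ and $g_{\theta_2,\theta_1}\ge 0$ this gives $H(\lambda)>0$, i.e.\ the strict form of \eqref{eqn:skl-contraction-edge-bsc-r3}.

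The main obstacle is exactly this shape statement for $g_{x,y}$, and it is genuinely delicate because \eqref{eqn:skl-contraction-edge-bsc-r3} is tight all along the boundary of the parameter region: equality holds at $\lambda\in\{0,1\}$ for every $(\theta_1,\theta_2)$, and at $\theta_2=0$ for every $\lambda$. Consequently the soft approaches fail — a term-by-term comparison of the Taylor expansions in $(\theta_1,\theta_2)$ of the two sides of \eqref{eqn:skl-contraction-edge-bsc-r3} is already false at second order when $\lambda=1$ (equality there only holds after summing the series), and the pointwise representation $g(\theta)=\int_0^1\theta^2(1-s^2\theta^2)^{-1}\,ds$ does not give the inequality separately for each $s$. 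To establish the shape claim I would write the argument of the inner $\arctanh$ as a rational function $v(\lambda)$, use $\frac{d^2}{d\lambda^2}\arctanh v=\bigl(v''(1-v^2)+2v(v')^2\bigr)(1-v^2)^{-2}$, and check (after clearing denominators) that the numerator polynomial $v''(1-v^2)+2v(v')^2$, whose coefficients are polynomials in $x,y$, has at most one sign change of the required type on $[0,1]$ — a finite but tedious Sturm-style verification in which the very simple form of $B_{3,\lambda}$ (only two $\BSC$s in the output mixture) is what keeps the polynomial degrees manageable, and where the restriction to $r=3$ enters. The case $r=4$ of Proposition~\ref{prop:skl-multi-sdpi} would be handled in the same way, starting from the analogous (longer) description of $P^{\times 3}\circ B_{4,\lambda}$ as a mixture of four $\BSC$s.
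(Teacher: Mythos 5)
Your reduction is correct and, up to notation, reproduces the paper's intermediate identity: for $r=3$ the output channel is the two-component BSC mixture you describe, and the lemma is equivalent to $H(\lambda)=\theta_1 g_{\theta_1,\theta_2}(\lambda)+\theta_2 g_{\theta_2,\theta_1}(\lambda)\ge 0$ with your $g_{x,y}$. The genuine gap is that the only substantive step — the ``shape statement'' that $g_{x,y}''$ changes sign at most once on $(0,1)$, from nonnegative to nonpositive, for \emph{every} $x,y\in[0,1)$ — is asserted, not proved. This is not a ``finite but tedious Sturm-style verification'': after clearing denominators you are claiming a sign-change pattern for a polynomial in $\lambda$ whose coefficients are polynomials in the two free parameters $(x,y)$, i.e.\ a semialgebraic statement over a two-parameter family, and you give no argument (discriminant/resultant analysis, explicit factorization, or otherwise) that it holds. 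Since everything else in your write-up (boundary values $g(0)=g(1)=0$, $g'(0)\ge 0\ge g'(1)$, the unimodality bookkeeping) is routine, the proposal in effect defers the entire content of the lemma to an unverified claim that is at least as delicate as the inequality itself. A secondary, fixable point: the step ``$g$ not identically zero, therefore $g>0$ on all of $(0,1)$'' needs an extra ingredient (e.g.\ real-analyticity of $g_{x,y}$ in $\lambda$ for $x,y<1$, noting its inner $\arctanh$ argument stays in $(-1,1)$), since monotone-up/monotone-down alone allows $g$ to vanish on a subinterval near $1$; and for strictness when $\max\{\theta_1,\theta_2\}=1$ you should note the left side is finite for $\lambda<1$ while the right side is $+\infty$.

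What you are missing is that your target $g_{x,y}\ge 0$ has a two-line direct proof, which is exactly how the paper finishes: insert the intermediate quantity $2\arctanh(\lambda x)=\arctanh\frac{2\lambda x}{1+\lambda^2 x^2}$ between the two terms of $g_{x,y}$. By monotonicity of $\arctanh$, the comparison $\arctanh\frac{2\lambda x(1-\lambda y^2)}{1+\lambda^2(x^2-y^2-x^2y^2)}\le 2\arctanh(\lambda x)$ reduces to comparing the two fractions, and the cross-multiplied difference is the explicit nonnegative quantity $\bigl(1+\lambda^2(x^2-y^2-x^2y^2)\bigr)-(1-\lambda y^2)(1+\lambda^2 x^2)=\lambda(1-\lambda)(1-\lambda x^2)y^2\ge 0$ (the paper's Lemma~\ref{lem:boht-tech-r3}, which also gives strictness when $0<\lambda<1$ and $x,y>0$); then $2\arctanh(\lambda x)\le 2\lambda\arctanh x$ by convexity of $\arctanh$ on $[0,1)$ (strict for $0<\lambda<1$, $x>0$). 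This replaces your second-derivative shape analysis entirely and is what makes the $r=3$ case elementary; note also that the paper's $r=4$ case does not follow this ``same way'' template but rests on a separate, nontrivial inequality of \cite{marwaha2023useful}, so your closing remark about handling $r=4$ analogously is optimistic.
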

\begin{proof}
  We expand LHS of~\eqref{eqn:skl-contraction-edge-bsc-r3} using the BP recursion formula established in Section~\ref{sec:non-recon-special}.
  Let $\theta_i=1-2\Delta_i$ for $i=1,2$. Then
  \begin{align} \label{eqn:skl-contraction-edge-bsc-r3-1}
    & C_{\SKL}((\BSC_{\Delta_1} \times \BSC_{\Delta_2})\circ B_{r,\lambda}) \\
    \nonumber =&~ \sum_{x_1=+,x_2\in \{\pm\}}
    \frac 12 \lambda (\theta_1 x_1 + \theta_2 x_2)
    \arctanh\frac{\lambda(\theta_1 x_1 + \theta_2 x_2)}{\lambda(1 + \theta_1 x_1 \theta_2 x_2) + (1-\lambda)} \\
    \nonumber =&~ \lambda \left(\frac 12 (\theta_1+\theta_2) \arctanh\frac{\lambda(\theta_1+\theta_2)}{1+\lambda \theta_1 \theta_2} + \frac 12 (\theta_1-\theta_2) \arctanh\frac{\lambda(\theta_1-\theta_2)}{1-\lambda \theta_1 \theta_2}\right) \\
    \nonumber =&~ \lambda \left(\frac 12 (\theta_1+\theta_2)F_\lambda(\theta_1,\theta_2) + \frac 12 (\theta_1-\theta_2) F_\lambda(\theta_1,-\theta_2)\right)
  \end{align}
  where
  \begin{align}
    F_\lambda(\theta_1,\theta_2) := \arctanh\frac{\lambda(\theta_1 + \theta_2)}{1+\lambda \theta_1 \theta_2}.
  \end{align}
  Note that by definition, $F_\lambda(\theta_1,\theta_2) = -F_\lambda(-\theta_1,-\theta_2)$
  and $F_\lambda(\theta_1,\theta_2) = F_\lambda(\theta_2,\theta_1)$.

  We have
  \begin{align} \label{eqn:skl-contraction-edge-bsc-r3-2}
    &~ \frac 12 (\theta_1+\theta_2)F_\lambda(\theta_1,\theta_2) + \frac 12 (\theta_1-\theta_2) F_\lambda(\theta_1,-\theta_2)  \\
    \nonumber =&~ \frac 12 \theta_1 (F_\lambda(\theta_1,\theta_2)+F_\lambda(\theta_1,-\theta_2))
    + \frac12 \theta_2 (F_\lambda(\theta_1,\theta_2)+F_\lambda(-\theta_1,\theta_2))\\
    \nonumber \le&~ \theta_1 F_\lambda(\theta_1,0) + \theta_2 F_\lambda(0,\theta_2) \\
    \nonumber =&~ \theta_1 \arctanh(\lambda\theta_1) + \theta_2 \arctanh(\lambda\theta_2) \\
    \nonumber \le&~ \lambda(\theta_1 \arctanh \theta_1 + \theta_2 \arctanh \theta_2) \\
    \nonumber =&~ \lambda (C_{\SKL}(\BSC_{\Delta_1}) + C_{\SKL}(\BSC_{\Delta_2})),
  \end{align}
  where the second step follows from Lemma~\ref{lem:boht-tech-r3}, and the fourth step follows convexity of $\arctanh$ in $[0,1]$.
  Combining~\eqref{eqn:skl-contraction-edge-bsc-r3-1}\eqref{eqn:skl-contraction-edge-bsc-r3-2} we finish the proof.
\end{proof}

\begin{lemma} \label{lem:boht-tech-r3}
For $\lambda,\theta_1,\theta_2 \in [0, 1]$, we have
\begin{align} \label{eqn:lem-boht-tech-r3}
\frac 12 \left(F_\lambda(\theta_1,\theta_2) + F_\lambda(\theta_1,-\theta_2)\right) \le F_\lambda(\theta_1,0).
\end{align}
Furthermore, the inequality is strict when $0<\lambda<1$ and $\theta_1,\theta_2>0$.
\end{lemma}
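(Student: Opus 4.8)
The plan is to reduce the inequality to an algebraic one by exponentiating. Write $F_\lambda(\theta_1,\theta_2)=\arctanh\phi(\theta_2)$ with $\phi(\theta_2):=\frac{\lambda(\theta_1+\theta_2)}{1+\lambda\theta_1\theta_2}$, so that $2F_\lambda(\theta_1,\theta_2)=\log\frac{1+\phi(\theta_2)}{1-\phi(\theta_2)}$. A one-line computation gives
\begin{align}
1+\phi(\theta_2)=\frac{(1+\lambda\theta_1)+\lambda\theta_2(1+\theta_1)}{1+\lambda\theta_1\theta_2},\qquad
1-\phi(\theta_2)=\frac{(1-\lambda\theta_1)-\lambda\theta_2(1-\theta_1)}{1+\lambda\theta_1\theta_2}.
\end{align}
First I would dispose of the boundary cases $\lambda\in\{0,1\}$: for $\lambda=0$ both sides are $0$, and for $\lambda=1$ the addition formula $\arctanh a+\arctanh b=\arctanh\frac{a+b}{1+ab}$ gives $F_1(\theta_1,\theta_2)=\arctanh\theta_1+\arctanh\theta_2$, so $F_1(\theta_1,\theta_2)+F_1(\theta_1,-\theta_2)=2\arctanh\theta_1=2F_1(\theta_1,0)$, i.e. equality. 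For the main case $0<\lambda<1$ I would first record that $|\phi(\pm\theta_2)|<1$ (for the plus sign because $\lambda\theta_1(1-\theta_2)+\lambda\theta_2\le\lambda<1$, and similarly for the minus sign), so that every $\arctanh$ below is finite and each numerator/denominator appearing in the next step is strictly positive — in particular $1\pm\lambda\theta_1>0$ since $1-\lambda\theta_1\ge 1-\lambda>0$.

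Then, multiplying the displayed formulas at $\theta_2$ and at $-\theta_2$,
\begin{align}
2\bigl(F_\lambda(\theta_1,\theta_2)+F_\lambda(\theta_1,-\theta_2)\bigr)
=\log\frac{(1+\lambda\theta_1)^2-\lambda^2\theta_2^2(1+\theta_1)^2}{(1-\lambda\theta_1)^2-\lambda^2\theta_2^2(1-\theta_1)^2},
\end{align}
while $4F_\lambda(\theta_1,0)=\log\frac{(1+\lambda\theta_1)^2}{(1-\lambda\theta_1)^2}$. Since $\log$ is increasing and all four quantities are positive, the claimed inequality \eqref{eqn:lem-boht-tech-r3} is equivalent, after cross-multiplying and cancelling the common term $(1+\lambda\theta_1)^2(1-\lambda\theta_1)^2$, to
\begin{align}
\lambda^2\theta_2^2\Bigl[(1+\theta_1)^2(1-\lambda\theta_1)^2-(1-\theta_1)^2(1+\lambda\theta_1)^2\Bigr]\ge 0.
\end{align}

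Finally I would finish by the factorization $(1+\theta_1)(1-\lambda\theta_1)=p+q$ and $(1-\theta_1)(1+\lambda\theta_1)=p-q$ with $p:=1-\lambda\theta_1^2\ge 0$ and $q:=\theta_1(1-\lambda)\ge 0$; the bracket then equals $(p+q)^2-(p-q)^2=4pq\ge 0$, which proves the inequality. Strictness holds exactly when $\lambda^2\theta_2^2>0$ and $pq>0$, i.e. when $0<\lambda<1$, $\theta_1>0$ and $\theta_2>0$ (note $p>0$ is then automatic, as $\lambda\theta_1^2<1$), matching the statement. I do not expect a genuine obstacle here: the content is the single-line identity $(p+q)^2-(p-q)^2=4pq$. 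The only care required is the sign/finiteness bookkeeping that makes the exponentiation and cancellation legitimate, which is why I peel off $\lambda=1$ (where individual $\arctanh$ values can be infinite) and verify $|\phi(\pm\theta_2)|<1$ before computing.
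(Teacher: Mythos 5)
Your proof is correct, and it is essentially the same argument as the paper's: the paper combines the two $\arctanh$ terms via the addition formula and compares arguments by monotonicity, which after cross-multiplication reduces to the nonnegativity of $\lambda(1-\lambda)(1-\lambda\theta_1^2)\theta_2^2$ — exactly your certificate $4pq\,\lambda^2\theta_2^2$ up to the common factor $2\lambda\theta_1$. Your extra bookkeeping (peeling off $\lambda\in\{0,1\}$ and checking $|\phi(\pm\theta_2)|<1$ before exponentiating) is sound and, if anything, slightly more careful than the paper's write-up.
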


\begin{proof}
We use the formula
\begin{align}
  \arctanh x + \arctanh y = \arctanh \frac{x+y}{1+x y}
\end{align}
to expand both sides of~\eqref{eqn:lem-boht-tech-r3}. LHS is
\begin{align} \label{eqn:lem-boht-tech-r3-lhs}
  & F_\lambda(\theta_1,\theta_2) + F_\lambda(\theta_1,-\theta_2) \\
  =&~ \arctanh\frac{\lambda(\theta_1 + \theta_2)}{1+\lambda \theta_1 \theta_2} + \arctanh\frac{\lambda(\theta_1 - \theta_2)}{1-\lambda \theta_1 \theta_2} \nonumber \\
  =&~ \arctanh \frac{2 \lambda \theta_1 (1-\lambda \theta_2^2)}{\lambda^2(\theta_1^2-\theta_2^2) + 1-\lambda^2 \theta_1^2 \theta_2^2}. \nonumber
\end{align}
RHS is
\begin{align} \label{eqn:lem-boht-tech-r3-rhs}
  2 F_\lambda(\theta_1,0) = \arctanh \frac{2\lambda \theta_1}{1+\lambda^2\theta_1^2}.
\end{align}
By comparing \eqref{eqn:lem-boht-tech-r3-lhs}\eqref{eqn:lem-boht-tech-r3-rhs} and using monotonicity of $\arctanh$, it suffices to prove that
\begin{align}
  \frac{1-\lambda \theta_2^2}{\lambda^2(\theta_1^2-\theta_2^2) + 1-\lambda^2 \theta_1^2 \theta_2^2} \le \frac{1}{1+\lambda^2\theta_1^2}.
\end{align}
We have
\begin{align}
  (\lambda^2(\theta_1^2-\theta_2^2) + 1-\lambda^2 \theta_1^2 \theta_2^2) - (1-\lambda \theta_2^2)(1+\lambda^2\theta_1^2) = \lambda(1-\lambda)(1-\lambda \theta_1^2) \theta_2^2 \ge 0.
\end{align}
This finishes the proof.
\end{proof}

\begin{proof}[Proof of Prop.~\ref{prop:skl-multi-sdpi}, case $r=3$]
  By BSC mixture representation of BMS channels (Lemma~\ref{lem:bms-mixture}) and Lemma~\ref{lem:skl-contraction-edge-bsc-r3}.
\end{proof}

\subsection{Case \texorpdfstring{$r=4$}{r=4}} \label{sec:skl-multi-sdpi:r4}
Now we prove the $r=4$ case.
\begin{lemma} \label{lem:skl-contraction-edge-bsc-r4}
For all $\Delta_1,\Delta_2,\Delta_3\in \left[0,\frac 12\right]$, we have
\begin{align} \label{eqn:skl-contraction-edge-bsc-r4}
  C_{\SKL}((\BSC_{\Delta_1} \times \BSC_{\Delta_2} \times \BSC_{\Delta_3}) \circ B_{r,\lambda}) \le \lambda^2 \sum_{i\in [3]} C_{\SKL}(\BSC_{\Delta_i}).
\end{align}
Furthermore, the inequality is strict when $0<\lambda<1$ and $\min\{\Delta_1,\Delta_2,\Delta_3\} < \frac 12$.
\end{lemma}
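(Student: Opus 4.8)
The plan is to follow the template of the $r=3$ proof (Lemma~\ref{lem:skl-contraction-edge-bsc-r3}): reduce to BSC inputs via the mixture representation, expand $C_{\SKL}$ through the explicit belief-propagation formula of Section~\ref{sec:non-recon-special}, reduce the target inequality to a concavity-type technical lemma in $\lambda,\theta_1,\theta_2,\theta_3$, and close with the convexity estimate $\arctanh(\lambda\theta)\le\lambda\arctanh\theta$.

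By Lemma~\ref{lem:bms-mixture} it suffices to prove \eqref{eqn:skl-contraction-edge-bsc-r4}. Writing $\theta_i=1-2\Delta_i$, and using that in the mixture description of Section~\ref{sec:non-recon-special} each BSC component of $(\BSC_{\Delta_1}\times\BSC_{\Delta_2}\times\BSC_{\Delta_3})\circ B_{4,\lambda}$ has weight equal to its denominator over $2^{r-1}$ while its $\theta$-parameter equals the corresponding numerator over that same denominator (so weight times $\theta$-parameter is just the numerator over $2^{r-1}$), I get
\begin{align}
  C_{\SKL}\bigl((\BSC_{\Delta_1}\times\BSC_{\Delta_2}\times\BSC_{\Delta_3})\circ B_{4,\lambda}\bigr)
  = \frac{\lambda}{4}\sum_{\epsilon_2,\epsilon_3\in\{\pm\}} A_{\epsilon_2\epsilon_3}\arctanh\frac{\lambda A_{\epsilon_2\epsilon_3}}{1+\lambda B_{\epsilon_2\epsilon_3}},
\end{align}
where $A_{\epsilon_2\epsilon_3}=\theta_1+\epsilon_2\theta_2+\epsilon_3\theta_3+\epsilon_2\epsilon_3\theta_1\theta_2\theta_3$ and $B_{\epsilon_2\epsilon_3}=\epsilon_2\theta_1\theta_2+\epsilon_3\theta_1\theta_3+\epsilon_2\epsilon_3\theta_2\theta_3$. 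Expanding $A_{\epsilon_2\epsilon_3}$ and collecting the four $\arctanh$ values by the characters $1,\epsilon_2,\epsilon_3,\epsilon_2\epsilon_3$ of $\{\pm\}^2$, the right-hand side becomes $\tfrac{\lambda}{4}(\theta_1 S_1+\theta_2 S_2+\theta_3 S_3+\theta_1\theta_2\theta_3 S_{123})$ for explicit signed sums $S_\bullet$ of the four arctanh terms, so \eqref{eqn:skl-contraction-edge-bsc-r4} reduces to
\begin{align}
  \theta_1 S_1+\theta_2 S_2+\theta_3 S_3+\theta_1\theta_2\theta_3 S_{123}\le 4\lambda\,(\theta_1\arctanh\theta_1+\theta_2\arctanh\theta_2+\theta_3\arctanh\theta_3).
\end{align}

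This last inequality is the $r=4$ analogue of \eqref{eqn:skl-contraction-edge-bsc-r3-2} and is where the real work lies. I would symmetrize the sum one branch at a time via the $\arctanh$ addition formula: fixing $\epsilon_2$ and writing $P=\theta_1+\epsilon_2\theta_2$, $Q=\theta_3(1+\epsilon_2\theta_1\theta_2)$, $c=1+\lambda\epsilon_2\theta_1\theta_2$, the two $\epsilon_3=\pm$ terms take the form $(P\pm Q)\arctanh\frac{\lambda(P\pm Q)}{c\pm\lambda\theta_3 P}$, formally the same shape as the $r=3$ sum; a technical lemma of the type of Lemma~\ref{lem:boht-tech-r3} — proved after clearing denominators and using monotonicity of $\arctanh$, so that it reduces to a polynomial inequality in $\lambda,\theta_1,\theta_2,\theta_3$ (analogous to the identity $\lambda(1-\lambda)(1-\lambda\theta_1^2)\theta_2^2\ge 0$ ending Lemma~\ref{lem:boht-tech-r3}) — controls the effect of symmetrizing over one branch. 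After two such rounds, and applying the $r=3$ bound Lemma~\ref{lem:skl-contraction-edge-bsc-r3} to the residual two-variable expression, one arrives at $C_{\SKL}\le\lambda(\theta_1\arctanh(\lambda\theta_1)+\theta_2\arctanh(\lambda\theta_2)+\theta_3\arctanh(\lambda\theta_3))$, and convexity of $\arctanh$ on $[0,1]$ upgrades the prefactor to $\lambda^2$. For the strictness claim: if some $\theta_i>0$ then the convexity step $\theta_i\arctanh(\lambda\theta_i)<\lambda\theta_i\arctanh\theta_i$ is already strict for $0<\lambda<1$, so the whole chain is strict, independently of strictness in the technical lemma.

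The main obstacle is the technical lemma itself. Unlike $r=3$, the two sign variables $\epsilon_2,\epsilon_3$ are coupled both through the triple product $\epsilon_2\epsilon_3\theta_1\theta_2\theta_3$ in $A_{\epsilon_2\epsilon_3}$ and through the factor $\theta_3 P$ appearing in the denominators after fixing $\epsilon_2$; consequently the naive approach of bounding each of $S_1,S_2,S_3$ separately and discarding $\theta_1\theta_2\theta_3 S_{123}$ fails — for small $\lambda$ (e.g. with $\theta_2,\theta_3$ near $1$) the contribution attached to $\theta_1$ alone exceeds its naive budget, so the $S_{123}$ term must be absorbed globally, borrowing slack from the other branches. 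Handling this cross term is the crux; I expect it to require a careful ordering of the two symmetrization steps with explicit bookkeeping of the residual terms, and possibly a preliminary normalization (ordering $\theta_1\ge\theta_2\ge\theta_3$, or using degradation to push off-branch parameters to the extremes $0$ or $1$, where the inequality becomes one-dimensional). As in Section~\ref{sec:skl-multi-sdpi:r3}, I anticipate this being isolated into one or two auxiliary lemmas (analogues of Lemma~\ref{lem:boht-tech-r3}) stated right after Lemma~\ref{lem:skl-contraction-edge-bsc-r4}.
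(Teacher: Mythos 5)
Your setup and reduction are exactly the paper's: the BSC mixture representation, the explicit BP formula, and the resulting reduction of \eqref{eqn:skl-contraction-edge-bsc-r4} to a four-variable $\arctanh$ inequality in $\lambda,\theta_1,\theta_2,\theta_3$ all match Eq.~\eqref{eqn:skl-contraction-edge-bsc-r4-1}. But the heart of the proof is precisely that inequality, and you have not proved it — you sketch a two-round symmetrization with "technical lemmas analogous to Lemma~\ref{lem:boht-tech-r3}" and explicitly defer the crux (absorbing the $\theta_1\theta_2\theta_3$ cross term) to anticipated auxiliary lemmas. In the paper this inequality is isolated as Lemma~\ref{lem:boht-r4}; the authors themselves could only conjecture it from numerics, and the proof they invoke is an external analytical argument of Marwaha \cite{marwaha2023useful}. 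So the proposal reproduces the easy half of the argument and leaves the hard half as a plan, which is a genuine gap rather than a proof.

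Moreover, the specific endpoint your plan aims for is false, so the plan cannot be completed as stated. You claim that after two symmetrization rounds one arrives at $C_{\SKL}\le\lambda\bigl(\theta_1\arctanh(\lambda\theta_1)+\theta_2\arctanh(\lambda\theta_2)+\theta_3\arctanh(\lambda\theta_3)\bigr)$ and then upgrades to $\lambda^2$ by convexity, mimicking the last two steps of \eqref{eqn:skl-contraction-edge-bsc-r3-2}. Expanding for small $\lambda$, the left side of \eqref{eqn:skl-contraction-edge-bsc-r4} is
\begin{align}
  \frac{\lambda^2}{4}\sum_{\epsilon_2,\epsilon_3\in\{\pm\}}\bigl(\theta_1+\epsilon_2\theta_2+\epsilon_3\theta_3+\epsilon_2\epsilon_3\theta_1\theta_2\theta_3\bigr)^2+O_r(\lambda^3)
  =\lambda^2\bigl(\theta_1^2+\theta_2^2+\theta_3^2+\theta_1^2\theta_2^2\theta_3^2\bigr)+O_r(\lambda^3),
\end{align}
while your claimed intermediate bound is $\lambda^2(\theta_1^2+\theta_2^2+\theta_3^2)+O_r(\lambda^3)$; taking all $\theta_i$ close to $1$ and $\lambda$ small, the former exceeds the latter. (For $r=3$ the analogous cross term is absent, which is why that chain works; for $r=4$ the triple product survives the averaging, and this is exactly why the correct target must keep $\arctanh\theta_i$ rather than $\arctanh(\lambda\theta_i)$ on the right, as in Lemma~\ref{lem:boht-r4}.) Consequently your strictness argument, which leans entirely on the convexity step $\arctanh(\lambda\theta_i)<\lambda\arctanh\theta_i$, also has nothing to stand on. To repair the proof you must either prove the four-variable inequality of Lemma~\ref{lem:boht-r4} directly (or cite \cite{marwaha2023useful}), including its strictness statement; iterated application of Lemma~\ref{lem:boht-tech-r3}-type one-branch symmetrizations toward a $\lambda\sum_i\theta_i\arctanh(\lambda\theta_i)$ bound is not a viable route.
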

The proof is based on the following inequality. In a preliminary version of the current paper, we proposed this inequality as a conjecture based on numerical computation. Shortly after that, \cite{marwaha2023useful} gave a beautiful analytical proof.
\begin{lemma}[\cite{marwaha2023useful}] \label{lem:boht-r4}
  For $\lambda,\theta_1,\theta_2,\theta_3\in [0,1]$, we have
  \begin{align}
    &~ \frac 14 (G_\lambda(\theta_1,\theta_2,\theta_3) + G_\lambda(\theta_1,-\theta_2,\theta_3) + G_\lambda(\theta_1,\theta_2,-\theta_3) + G_\lambda(\theta_1,-\theta_2,-\theta_3)) \\
    \le&~ \lambda \sum_{i\in [3]} \theta_i \arctanh \theta_i, \nonumber
  \end{align}
  where
  \begin{align}
    G_\lambda(\theta_1,\theta_2,\theta_3)&:= (\theta_1+\theta_2+\theta_3+\theta_1\theta_2\theta_3) F_\lambda(\theta_1,\theta_2,\theta_3), \\
    F_\lambda(\theta_1,\theta_2,\theta_3)&:= \arctanh \frac{\lambda(\theta_1+\theta_2+\theta_3+\theta_1\theta_2\theta_3)}{1+\lambda(\theta_1\theta_2+\theta_2\theta_3+\theta_3\theta_1)}.
  \end{align}
  Furthermore, the inequality is strict when $0<\lambda<1$ and $\max\{\theta_1,\theta_2,\theta_3\} > 0$.
\end{lemma}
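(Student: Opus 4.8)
The plan is to avoid the trigonometry and instead read Lemma~\ref{lem:boht-r4} as a strong‑data‑processing statement, which can then be settled by a convexity argument.

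\emph{Step 1: reformulation.} Combining the explicit description of $(\BSC_{\Delta_1}\times\BSC_{\Delta_2}\times\BSC_{\Delta_3})\circ B_{4,\lambda}$ derived above with Definition~\ref{defn:bms-info-measure}, one has (with $\theta_i=1-2\Delta_i$) the identity $\tfrac14\sum_{\epsilon_2,\epsilon_3}G_\lambda(\theta_1,\epsilon_2\theta_2,\epsilon_3\theta_3)=\tfrac{1}{\lambda}\,C_{\SKL}(W_\lambda)$, where $W_\lambda:=(\BSC_{\Delta_1}\times\BSC_{\Delta_2}\times\BSC_{\Delta_3})\circ B_{4,\lambda}$; this is exactly the computation underlying Lemma~\ref{lem:skl-contraction-edge-bsc-r4}. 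Unwinding the channel, $W_\lambda$ is the BMS channel with $W_\lambda(\cdot\mid+)=Q_\lambda:=(1-\lambda)U+\lambda P_+$ and $W_\lambda(\cdot\mid-)=\bar Q_\lambda:=(1-\lambda)U+\lambda P_-$, where $U$ is uniform on $\{\pm\}^3$ and $P_\pm:=\bigotimes_{i=1}^3\BSC_{\Delta_i}(\cdot\mid\pm)$. Since $C_{\SKL}$ of a BMS channel with uniform input equals $\tfrac12\KL\!\big(W_\lambda(\cdot\mid+)\,\|\,W_\lambda(\cdot\mid-)\big)$ (the two KL directions agree by the global sign flip), while $\sum_i C_{\SKL}(\BSC_{\Delta_i})=\tfrac12\KL(P_+\|P_-)$ by additivity of KL on product measures, Lemma~\ref{lem:boht-r4} is equivalent to
\begin{align}
  \KL\!\left(Q_\lambda \,\|\, \bar Q_\lambda\right)\ \le\ \lambda^2\,\KL\!\left(P_+ \,\|\, P_-\right),\qquad \lambda\in[0,1].
\end{align}
(If some $\theta_i=1$ the right side is $+\infty$, so assume all $\theta_i<1$; then $Q_\lambda,\bar Q_\lambda$ have full support for $\lambda\in[0,1]$.)

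\emph{Step 2: convexity and the $\lambda^2$ upgrade.} Set $\phi(\lambda):=\KL(Q_\lambda\|\bar Q_\lambda)$. Since $Q_\lambda(y),\bar Q_\lambda(y)$ are affine and positive in $\lambda$, a direct differentiation gives $\phi(0)=\KL(U\|U)=0$, $\phi'(0)=0$ (as $Q_0=\bar Q_0$), and, using $(P_+(y)-U(y))\bar Q_\lambda(y)-(P_-(y)-U(y))Q_\lambda(y)=U(y)(P_+(y)-P_-(y))$,
\begin{align}
  \phi''(\lambda)=\sum_{y\in\{\pm\}^3}\frac{U(y)^2\,(P_+(y)-P_-(y))^2}{Q_\lambda(y)\,\bar Q_\lambda(y)^2}\ \ge\ 0 .
\end{align}
So $\phi$ is convex with $\phi(0)=\phi'(0)=0$, which by itself gives only $\phi(\lambda)\le\lambda\,\phi(1)$. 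To recover the extra factor $\lambda$, note $\phi(\lambda)=\int_0^\lambda\phi''(s)(\lambda-s)\,ds$, hence for $\psi(\lambda):=\phi(\lambda)/\lambda^2=\int_0^1\phi''(\lambda u)(1-u)\,du$ (with $\psi(0):=\phi''(0)/2$) one gets $\psi''(\lambda)=\int_0^1 u^2\phi^{(4)}(\lambda u)(1-u)\,du$. The key point is that \emph{$\phi''$ is itself convex in $\lambda$}: each summand equals $c_y/(Q_\lambda(y)\bar Q_\lambda(y)^2)$ with $c_y\ge0$, and $-\log\!\big(Q_\lambda(y)\bar Q_\lambda(y)^2\big)=-\log Q_\lambda(y)-2\log\bar Q_\lambda(y)$ is convex in $\lambda$, so each summand is log‑convex, hence convex. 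Thus $\phi^{(4)}\ge0$ on $[0,1]$, so $\psi$ is convex on $[0,1]$; and since $\psi(1)=\phi(1)=\KL(P_+\|P_-)$, convexity gives $\psi(\lambda)\le(1-\lambda)\psi(0)+\lambda\psi(1)\le\psi(1)$ as soon as $\psi(0)\le\psi(1)$. Everything thus reduces to this single numerical comparison.

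\emph{Step 3: endpoint comparison (where $r=4$ enters) and strictness.} Here $\psi(0)=\phi''(0)/2=4\sum_y(P_+(y)-P_-(y))^2$; inserting $P_+(y)-P_-(y)=\tfrac14\big(\sum_i y_i\theta_i+y_1y_2y_3\,\theta_1\theta_2\theta_3\big)$ and using orthogonality of the characters $y\mapsto\prod_{i\in S}y_i$ gives $\psi(0)=2\big(\theta_1^2+\theta_2^2+\theta_3^2+\theta_1^2\theta_2^2\theta_3^2\big)$, whereas $\psi(1)=\KL(P_+\|P_-)=2\sum_i\theta_i\arctanh\theta_i$. So $\psi(0)\le\psi(1)$ becomes $\sum_i\theta_i^2+(\theta_1\theta_2\theta_3)^2\le\sum_i\theta_i\arctanh\theta_i$, which I would obtain from $\arctanh t\ge t+\tfrac13 t^3$ (giving $\sum_i\theta_i\arctanh\theta_i\ge\sum_i\theta_i^2+\tfrac13\sum_i\theta_i^4$) together with AM--GM and $\theta_1\theta_2\theta_3\le1$, namely $\tfrac13\sum_i\theta_i^4\ge(\theta_1\theta_2\theta_3)^{4/3}\ge(\theta_1\theta_2\theta_3)^2$. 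For the strict version: when some $\theta_i\in(0,1)$ the summand of $\phi''$ with $P_+(y)\ne U(y)$ is strictly log‑convex, so $\psi$ is strictly convex and $\psi(\lambda)<\psi(1)$ for $\lambda\in(0,1)$; when some $\theta_i=1$ the left side of the lemma is finite while the right side is $+\infty$.

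\emph{The main obstacle.} The only non‑mechanical point is Step~2: plain convexity of $\lambda\mapsto\KL(Q_\lambda\|\bar Q_\lambda)$ yields just the factor $\lambda$, and the missing factor is supplied by the second‑order fact that $\phi''$ is convex (equivalently, that its summands are log‑convex), which promotes the bound to $\lambda^2$ via convexity of $\phi(\lambda)/\lambda^2$. Everything else is elementary, and the comparison in Step~3 is precisely what confines this method to $r\le4$: for $r-1\ge5$ coordinates the analogue of $\psi(0)\le\psi(1)$ fails, matching the fact that $\eta^{(m,s)}_{\SKL}(B_{r,\lambda})$ no longer equals $\lambda^2$ there.
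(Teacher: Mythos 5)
Your proof is correct, and it takes a genuinely different route from the paper: the paper does not prove Lemma~\ref{lem:boht-r4} at all (the inequality was conjectured from numerics and the proof is outsourced to \cite{marwaha2023useful}), whereas you give a self-contained argument. I verified the key steps. The reformulation is right: the identity $\frac14\sum_{\epsilon_2,\epsilon_3}G_\lambda(\theta_1,\epsilon_2\theta_2,\epsilon_3\theta_3)=\frac1\lambda C_{\SKL}(W_\lambda)$ is exactly the computation in Section~\ref{sec:skl-multi-sdpi:r4}, which does not use the lemma, so there is no circularity; moreover $W_\lambda(\cdot\mid\pm)=(1-\lambda)U+\lambda P_\pm$, and via the $(\Delta,Z)$ representation and the chain rule one indeed has $C_{\SKL}(W)=\frac12\KL\bigl(W(\cdot\mid +)\,\|\,W(\cdot\mid -)\bigr)$ for BMS $W$, so the lemma is equivalent to $\KL(Q_\lambda\|\bar Q_\lambda)\le\lambda^2\KL(P_+\|P_-)$. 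Your formula $\phi''(\lambda)=\sum_y U(y)^2(P_+(y)-P_-(y))^2/\bigl(Q_\lambda(y)\bar Q_\lambda(y)^2\bigr)$ checks out (it expands to $\sum_y\bigl[a_y^2/Q_\lambda-2a_yb_y/\bar Q_\lambda+b_y^2Q_\lambda/\bar Q_\lambda^2\bigr]$, which is what direct differentiation gives), each summand is log-convex because $Q_\lambda,\bar Q_\lambda$ are positive and affine in $\lambda$, so $\phi''$ is convex and $\psi(\lambda)=\phi(\lambda)/\lambda^2=\int_0^1\phi''(\lambda u)(1-u)\,du$ is convex on $[0,1]$; the endpoints are $\psi(0)=2(\theta_1^2+\theta_2^2+\theta_3^2+\theta_1^2\theta_2^2\theta_3^2)$ and $\psi(1)=2\sum_i\theta_i\arctanh\theta_i$, and the chord bound plus $\arctanh t\ge t+\frac13t^3$ and AM--GM close the argument. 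What your route buys is a short convexity-based proof with a clean general criterion (the $\lambda^2$-contraction of KL between the two $\lambda$-mixtures holds whenever $\phi''(0)/2\le\phi(1)$), which isolates exactly where the restriction to three coordinates ($r=4$) enters.

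Two small points to tighten, neither affecting validity. For strictness you need a $y$ with both $c_y>0$ and a non-degenerate summand; $y=(+,+,+)$ works, since $P_+(+,+,+)-P_-(+,+,+)=\frac14(\theta_1+\theta_2+\theta_3+\theta_1\theta_2\theta_3)>0$ and $P_+(+,+,+)>U(+,+,+)$ whenever some $\theta_i>0$, giving strict convexity of $\phi''$ and hence $\psi(\lambda)<\psi(1)$ on $(0,1)$; and when some $\theta_i=1$ with $\lambda\in(0,1)$ one should note the left side is finite because $\bigl|\prod_j(1+\theta_jx_j)-\prod_j(1-\theta_jx_j)\bigr|\le\prod_j(1+\theta_jx_j)+\prod_j(1-\theta_jx_j)$ while the denominator has the extra term $2(1-\lambda)>0$, so every $\arctanh$ argument is strictly inside $(-1,1)$. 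Finally, your closing remark that $\eta^{(m,s)}_{\SKL}(B_{r,\lambda})\ne\lambda^2$ once $r-1\ge 5$ overstates what is known: the paper only implies this for $r\ge 7$ (via Theorem~\ref{thm:boht-recon-large-d}), the cases $r=5,6$ being open; what fails for $r-1\ge 5$ is your endpoint comparison, not (as far as is known) the contraction inequality itself.
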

\begin{proof}[Proof of Lemma~\ref{lem:skl-contraction-edge-bsc-r4}]
  We expand LHS of~\eqref{eqn:skl-contraction-edge-bsc-r4} using BP recursion formula established in Section~\ref{sec:non-recon-special}.
  Let $\theta_i=1-2\Delta_i$ for $i\in [3]$. Then
  \begin{align} \label{eqn:skl-contraction-edge-bsc-r4-1}
    & C_{\SKL}((\BSC_{\Delta_1} \times \BSC_{\Delta_2}\times \BSC_{\Delta_3})\circ B_{r,\lambda}) \\
    \nonumber =&~ \sum_{x_1=+,x_2,x_3\in \{\pm\}}
    \frac 14 \lambda (\theta_1 x_1 + \theta_2 x_2 + \theta_3 x_3 + \theta_1 \theta_2 \theta_3 x_1 x_2 x_3) \\
    \nonumber & \cdot \arctanh\frac{\lambda(\theta_1 x_1 + \theta_2 x_2 + \theta_3 x_3 + \theta_1 \theta_2 \theta_3 x_1 x_2 x_3)}{1 + \lambda (\theta_1 x_1 \theta_2 x_2 + \theta_2 x_3 \theta_3 x_3 + \theta_3 x_3 \theta_1 x_1)} \\
    \nonumber =&~ \frac \lambda 4(G_\lambda(\theta_1,\theta_2,\theta_3) + G_\lambda(\theta_1,-\theta_2,\theta_3) + G_\lambda(\theta_1,\theta_2,-\theta_3) + G_\lambda(\theta_1,-\theta_2,-\theta_3))
  \end{align}
  where
  \begin{align}
    G_\lambda(\theta_1,\theta_2,\theta_3)&:= (\theta_1+\theta_2+\theta_3+\theta_1\theta_2\theta_3) F_\lambda(\theta_1,\theta_2,\theta_3), \\
    F_\lambda(\theta_1,\theta_2,\theta_3)&:= \arctanh \frac{\lambda(\theta_1+\theta_2+\theta_3+\theta_1\theta_2\theta_3)}{1+\lambda(\theta_1\theta_2+\theta_2\theta_3+\theta_3\theta_1)}.
  \end{align}
  By Lemma~\ref{lem:boht-r4}, we have
  \begin{align} \label{eqn:skl-contraction-edge-bsc-r4-2}
    &~ \frac 14 (G_\lambda(\theta_1,\theta_2,\theta_3) + G_\lambda(\theta_1,-\theta_2,\theta_3) + G_\lambda(\theta_1,\theta_2,-\theta_3) + G_\lambda(\theta_1,-\theta_2,-\theta_3)) \\
    \nonumber \le&~ \lambda \sum_{i\in[3]} \theta_i \arctanh \theta_i \\
    \nonumber =&~ \lambda \sum_{i\in [3]} C_{\SKL}(\BSC_{\Delta_i}).
  \end{align}
  Combining~\eqref{eqn:skl-contraction-edge-bsc-r4-1}\eqref{eqn:skl-contraction-edge-bsc-r4-2} we finish the proof.
\end{proof}

\begin{proof}[Proof of Prop.~\ref{prop:skl-multi-sdpi}, case $r=4$]
  By BSC mixture representation of BMS channels (Lemma~\ref{lem:bms-mixture}) and Lemma~\ref{lem:skl-contraction-edge-bsc-r4}.
\end{proof}


\subsection{Handle the critical case} \label{sec:skl-multi-sdpi:critical}
In section we prove the critical case of Theorem~\ref{thm:boht-non-recon-special}\ref{item:thm-boht-non-recon-special-r34}, that for $r=3,4$ and $(r-1)d\lambda^2=1$, reconstruction is impossible for the BOHT model $\BOHT(2,r,\lambda,D)$.

Before giving the proof, we introduce some more preliminaries on BMS channels.
\begin{definition}[Limit of BMS channels] \label{defn:bms-limit}
  Let $(P_k)_{k\ge 0}$ be a sequence of BMS channels and $P_\infty$ be a BMS channel.
  For $k\in \bZ_{\ge 0} \cup \{\infty\}$, let $\Delta_k\in\left[0,\frac 12\right]$ denote the $\Delta$-component of $P_k$ and $P_{\Delta_k} \in \cP\left(\left[0,\frac 12\right]\right)$ denote its distribution.
  We say $(P_k)_{k\ge 0}$ converges weakly to $P_\infty$ if $(P_{\Delta_k})_{k\to \infty}$ converges weakly to $P_{\Delta_\infty}$ as distributions on $\left[0,\frac 12\right]$.
\end{definition}

The following lemma is a direct consequence of \cite[Lemma 11.2]{gu2023channel} (see also \cite[Theorem 7.24]{richardson2008modern}).
\begin{lemma} \label{lem:bms-down-seq-limit}
  Let $(P_k)_{k\ge 0}$ be a sequence of BMS channels.
  If $P_{k+1}\le_{\deg} P_k$ for all $k\ge 0$, then $(P_k)_{k\ge 0}$ converges weakly to some BMS channels $P_\infty$.
\end{lemma}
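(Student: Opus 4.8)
The plan is to pass to the $\Delta$-component picture of Lemma~\ref{lem:bms-mixture}: write $\mu_k\in\cP\!\left(\left[0,\frac12\right]\right)$ for the law of the $\Delta$-component of $P_k$, so that, by Definition~\ref{defn:bms-limit}, the goal is exactly to show that $(\mu_k)_{k\ge0}$ converges weakly. Since $\left[0,\frac12\right]$ is compact, $\cP\!\left(\left[0,\frac12\right]\right)$ is sequentially compact in the weak topology, so $(\mu_k)$ has at least one weak subsequential limit; it therefore suffices to show that every subsequential limit is the same measure. I would do this by producing a countable family of real functionals of a BMS channel that (a) are monotone along the sequence and (b) together determine the $\Delta$-component law.

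The functionals I would use are the ``higher $\chi^2$'' capacities. For an integer $j\ge1$ let $f_{2j}(x):=(x-1)^{2j}$; this is convex with $f_{2j}(1)=0$, hence defines a bona fide $f$-information $I_{f_{2j}}$. Carrying out the BSC-mixture computation behind Definition~\ref{defn:bms-info-measure} (i.e.\ $C_f(P)=\bE\!\left[\tfrac12 f(1+\theta)+\tfrac12 f(1-\theta)\right]$ for a BMS channel with $\theta$-component $\theta$), and using $f_{2j}(1+\theta)=f_{2j}(1-\theta)=\theta^{2j}$, we get $C_{f_{2j}}(P)=\bE[\theta^{2j}]$ — for $j=1$ this is just $C_{\chi^2}$. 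Since $P_{k+1}\le_{\deg}P_k$ for all $k$ and every $f$-information is monotone under degradation (recalled in Section~\ref{sec:prelim}), the sequence $k\mapsto \bE\big[\theta_{P_k}^{2j}\big]=C_{f_{2j}}(P_k)$ is non-increasing and takes values in $[0,1]$, hence converges to some $m_j$.

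It remains to run the uniqueness argument. If $\mu_\infty$ is any weak subsequential limit of $(\mu_k)$, then because $\Delta\mapsto\theta^{2j}=(1-2\Delta)^{2j}$ is bounded and continuous on $\left[0,\frac12\right]$, we get $\int\theta^{2j}\,d\mu_\infty=m_j$ for all $j\ge1$. Thus the pushforward of $\mu_\infty$ under the homeomorphism $\Delta\mapsto\theta^2=(1-2\Delta)^2$ of $\left[0,\frac12\right]$ onto $[0,1]$ is a probability measure on $[0,1]$ with the prescribed moment sequence $(m_j)_{j\ge0}$; since a measure on a bounded interval is determined by its moments (Hausdorff moment problem), $\mu_\infty$ is unique, and then weak convergence of the whole sequence $(\mu_k)$ follows from compactness. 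The limiting BMS channel $P_\infty$ is the one with $\Delta$-component law $\mu_\infty$. (This is also exactly what \cite[Lemma 11.2]{gu2023channel}, and \cite[Theorem 7.24]{richardson2008modern}, give, so in the paper we simply cite those.) There is no serious obstacle; the only two points to get right are exhibiting \emph{enough} degradation-monotone functionals — the convexity of $(x-1)^{2j}$ does it — and appealing to moment-determinacy, which is automatic on the compact interval $[0,1]$.
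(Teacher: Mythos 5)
Your proof is correct: the even moments $\bE[\theta^{2j}]=C_{f_{2j}}(P)$ are genuine $f$-informations for the convex functions $f_{2j}(x)=(x-1)^{2j}$ with $f_{2j}(1)=0$, hence non-increasing along a degradation-ordered sequence, and combining their convergence with sequential compactness of $\cP\left(\left[0,\frac12\right]\right)$ and moment-determinacy on a compact interval does force every weak subsequential limit of the $\Delta$-component laws to be the same, which is exactly the paper's notion of weak convergence of BMS channels. The paper itself gives no proof, deferring to \cite[Lemma 11.2]{gu2023channel} and \cite[Theorem 7.24]{richardson2008modern}, and those arguments run on the same degradation-monotone moment functionals, so your write-up is a correct self-contained substitute for the citation.
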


Recall that $M_k$ is the BMS channel $\sigma_\rho \mapsto (T_k, \sigma_{L_k})$.
Then $M_{k+1}\le_{\deg} M_k$. By Lemma~\ref{lem:bms-down-seq-limit}, the limit $M_\infty := \lim_{k\to \infty} M_k$ exists.
For $k\in \bZ_{\ge 0} \cup \{\infty\}$, let $\Delta_k$ denote the $\Delta$-component of $M_k$.
Then $P_{\Delta_k}$ converges weakly to $P_{\Delta_\infty}$ as $k\to \infty$.

The proof idea is as follows. By definition of the limit, we have $\BP(M_\infty) = M_\infty$.
If $M_\infty$ is non-trivial, then by Eq.~\eqref{eqn:prop-skl-multi-sdpi-strict}, we have $C_{\SKL}(\BP(M_\infty)) < C_{\SKL}(M_\infty)$, which leads to contradiction.
The actual argument is more involved because SKL capacity can be infinite for some BMS channels.

\begin{proof}[Proof of Theorem~\ref{thm:boht-non-recon-special}\ref{item:thm-boht-non-recon-special-r34} critical case]
The case $\lambda=1$ is already handled in Section~\ref{sec:non-recon-special}.
So we can wlog assume $0<\lambda<1$.

Suppose for the sake of contradiction that reconstruction holds.
Then the limit channel $M_\infty$ is non-trivial.

We first prove that $\bP[\Delta_\infty=0]=0$. If not, then by weak convergence, there exists $\delta>0$ such that for all $\epsilon>0$, $\lim_{k\to\infty}\bP[\Delta_k < \epsilon] > \delta$.
Because $\lim_{\epsilon\to 0^+} C_{\SKL}(\BSC_\epsilon) = \infty$, this implies
$\lim_{k\to \infty} C_{\SKL}(M_k) = \infty$.
However, for all $k\ge 1$, we have $C_{\SKL}(M_k) \le C_{\SKL}(M_1) = d C_{\SKL}(B_{r,\lambda}) < \infty$. Contradiction. So $\bP[\Delta_\infty=0]=0$.

Because $M_\infty$ is non-trivial, $\bP\left[\Delta_\infty=\frac 12\right]<1$.
So $\bP\left[0<\Delta_\infty<\frac 12\right] > 0$.
By weak convergence, there exists $c>0$ and a closed interval $I \subseteq \left(0,\frac 12\right)$ such that $\bP[\Delta_k\in I] \ge c$ for $k$ large enough.
By Prop.~\ref{prop:skl-multi-sdpi}, $\forall \delta_1,\ldots,\delta_{r-1}\in I$, we have
\begin{align}
  C_{\SKL}((\BSC_{\delta_1}\times \cdots \times \BSC_{\delta_{r-1}})\circ B_{r,\lambda}) < \lambda^2 \sum_{i\in [r-1]} C_{\SKL}(\BSC_{\delta_i}).
\end{align}
Because $I$ is compact, there exists $\epsilon>0$ such that $\forall \delta_1,\ldots,\delta_{r-1}\in I$,
\begin{align}
  C_{\SKL}((\BSC_{\delta_1}\times \cdots \times \BSC_{\delta_{r-1}})\circ B_{r,\lambda}) \le \lambda^2 \sum_{i\in [r-1]} C_{\SKL}(\BSC_{\delta_i}) - \epsilon.
\end{align}
For any $k$ satisfying $\bP[\Delta_k\in I] \ge c$, we have
\begin{align}
  &~C_{\SKL}(\BP(M_k)) \\
  \nonumber =&~ d C_{\SKL}\left(M_k^{\times (r-1)}\circ B_{r,\lambda}\right) \\
  \nonumber =&~ d \underset{\delta_1,\ldots,\delta_{r-1}\sim P_{\Delta_k}}{\bE} C_{\SKL}((\BSC_{\delta_1}\times \cdots \times \BSC_{\delta_{r-1}})\circ B_{r,\lambda}) \\
  \nonumber \le&~ d \underset{\delta_1,\ldots,\delta_{r-1}\sim P_{\Delta_k}}{\bE} \left[ \lambda^2 \sum_{i\in [r-1]} C_{\SKL}(\BSC_{\delta_i}) - \epsilon \mathbbm{1}\{\delta_1,\ldots,\delta_{r-1}\in I\}\right] \\
  \nonumber \le&~ d ((r-1)\lambda^2 C_{\SKL}(M_k) - \epsilon c^{r-1}) \\
  \nonumber =&~ C_{\SKL}(M_k) - d \epsilon c^{r-1}.
\end{align}
So for $k$ large enough,
\begin{align}
  C_{\SKL}(M_{k+1}) \le C_{\SKL}(M_k) - d \epsilon c^{r-1}.
\end{align}
Because $d \epsilon c^{r-1}>0$ and $C_{\SKL}(M_k)<\infty$ for all $k\ge 1$, this implies $C_{\SKL}(M_k)<0$ for $k$ large enough, which cannot be true.
This finishes the proof.
\end{proof}

\section{Computation of \texorpdfstring{$\chi^2$}{chi2}-contraction coefficients} \label{sec:chi2-multi-sdpi}
In this section we prove Prop.~\ref{prop:chi2-multi-sdpi} and Lemma~\ref{lem:chi2-multi-sdpi-large-lambda}, which computes the $\chi^2$-multi-terminal contraction coefficient $\eta^{(m,s)}_{\chi^2}(B_{r,\lambda})$.
In fact, our method works for the more general setting where $B: \{\pm\}\to \{\pm\}^{r-1}$ together with the sign flip $\{\pm\}^{r-1}\to \{\pm\}^{r-1}$ is a BMS channel.

\subsection{Less-noisy preorder} \label{sec:chi2-multi-sdpi:less-noisy}
Our method uses the less-noisy preorder, a very useful channel preorder, especially for BMS channels.
\begin{definition}[Less-noisy preorder \cite{korner1977comparison}] \label{defn:less-noisy}
  Let $P: \cX \to \cY$ and $Q: \cX \to \cZ$ be two channels with the same input alphabet. We say $P$ is less noisy than $Q$, denoted $P\ge_{\ln} Q$, if for every measurable space $\cW$, distribution $\pi\in \cP(\cW)$, and channel $R: \cW \to \cX$, we have $I(\pi, P\circ R) \ge I(\pi, Q\circ R)$.
\end{definition}
Less-noisy preorder behaves nicely under channel transformations, summarized as follows.
\begin{itemize}
  \item (Composition) Let $P, Q$ be two channels with the same input alphabet $\cX$. Let $R: \cW\to \cX$ be a channel. If $P \le_{\ln} Q$, then $P\circ R\le_{\ln} Q\circ R$.
  \item (Tensorization) \cite{sutter2014universal,polyanskiy2017strong} Let $P_1$ and $Q_1$ be two channels with the same input alphabet $\cX$, and $P_2$ and $Q_2$ be two channels with the same input alphabet $\cY$. If $P_1\le_{\ln} Q_1$ and $P_2\le_{\ln} Q_2$, then $P_1\times Q_1 \le_{\ln} P_2 \times Q_2$.
  \item ($\star$-convolution) Let $P_1,P_2,Q_1,Q_2$ be four channels with the same input alphabet. If $P_1\le_{\ln} Q_1$ and $P_2\le_{\ln} Q_2$, then $P_1\star Q_1 \le_{\ln} P_2 \star Q_2$.
\end{itemize}

Although defined using mutual information, less-noisy preorder is closely related to $\chi^2$-divergence. \cite[Theorem 1]{makur2018comparison} implies that for BMS channels $P, Q$, if $P \le_{\ln} Q$, then $C_{\chi^2}(P) \le C_{\chi^2}(Q)$.
Furthermore, under $\chi^2$-capacity constraint, BEC and BSC are the extremal channels in less-noisy preorder.
\begin{lemma}[{\cite[Lemma 2]{roozbehani2019low}}]\label{lem:bms-extremal-less-noisy}
  Among all BMS channels with the same $\chi^2$-capacity $C_{\chi^2}(W)=\eta$
  the least noisy one is $\BEC$ and the most noisy one is $\BSC$, i.e.
  \begin{equation}\label{eqn:bms-extremal-ln}
    \BSC_{1/2-\sqrt{\eta}/2} \le_{\ln} W \le_{\ln} \BEC_{1-\eta}.
  \end{equation}
\end{lemma}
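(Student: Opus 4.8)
The plan is to reduce the whole statement to one convexity fact about a scalar function. First I would use the BSC mixture representation (Lemma~\ref{lem:bms-mixture}) to realize $W$ as the channel $X\mapsto(\Theta,Z)$ with $\Theta\in[0,1]$ independent of $X$ and $Z\sim\BSC_{(1-\Theta)/2}(X)$, so that $\eta=C_{\chi^2}(W)=\bE[\Theta^2]$. Fix any source $\pi\in\cP(\cW)$ and any garbling channel $R\colon\cW\to\{\pm\}$ (this is what the less-noisy order tests against), and set
\[
\psi(s):=I\!\left(\pi,\BSC_{(1-\sqrt s)/2}\circ R\right),\qquad s\in[0,1],
\]
so that $\psi(0)=0$ (all-noise channel) and $\psi(1)=I(\pi,R)$ (identity channel). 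Conditioning $W\circ R$ on the internal randomness $\Theta$ (which is independent of $W$) gives $I(\pi,W\circ R)=\bE_\Theta[\psi(\Theta^2)]$; conditioning $\BEC_{1-\eta}\circ R$ on its observable erasure flag gives $I(\pi,\BEC_{1-\eta}\circ R)=\eta\,I(\pi,R)=\eta\,\psi(1)$; and $I(\pi,\BSC_{1/2-\sqrt\eta/2}\circ R)=\psi(\eta)$ because $\BSC_{1/2-\sqrt\eta/2}$ has $\theta$-component $\sqrt\eta$. If $\psi$ is convex on $[0,1]$, then Jensen gives $\psi(\eta)=\psi(\bE[\Theta^2])\le\bE_\Theta[\psi(\Theta^2)]$, i.e.\ $\BSC_{1/2-\sqrt\eta/2}\le_{\ln}W$; and convexity with $\psi(0)=0$ forces $\psi(s)\le s\,\psi(1)$ on $[0,1]$, hence $\bE_\Theta[\psi(\Theta^2)]\le\bE[\Theta^2]\,\psi(1)$, i.e.\ $W\le_{\ln}\BEC_{1-\eta}$. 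Since $\pi,R$ were arbitrary, this is exactly \eqref{eqn:bms-extremal-ln}.

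The remaining task, and the crux, would be to prove $\psi$ convex. Writing $\rho_w:=2\bP[X=+\mid W=w]-1\in[-1,1]$ and $\bar\rho:=\bE_W[\rho_W]$, a short computation shows the output of $\BSC_{(1-\theta)/2}\circ R$ satisfies $\bP[Z=+\mid W=w]=\tfrac12(1+\rho_w\theta)$, whence
\[
\psi(\theta^2)=h\!\left(\tfrac{1+\bar\rho\theta}{2}\right)-\bE_W\!\left[h\!\left(\tfrac{1+\rho_W\theta}{2}\right)\right],
\]
where $h(x)=-x\ln x-(1-x)\ln(1-x)$. Using $h'(x)=\ln\frac{1-x}{x}$ together with $\arctanh y=\sum_{k\ge0}y^{2k+1}/(2k+1)$, one finds $\frac{d}{ds}\,h\!\big(\tfrac{1+c\sqrt s}{2}\big)=-\tfrac12\sum_{k\ge0}\frac{c^{2k+2}}{2k+1}\,s^k$ for $c\in[-1,1]$ and $s\in[0,1)$, and therefore
\[
\psi'(s)=\tfrac12\sum_{k\ge0}\frac{s^k}{2k+1}\left(\bE_W[\rho_W^{\,2k+2}]-\bar\rho^{\,2k+2}\right).
\]
Each coefficient is nonnegative by Jensen applied to the convex map $t\mapsto t^{2k+2}$, and each $s^k$ is nondecreasing on $[0,1)$, so $\psi'$ is nondecreasing there; hence $\psi$ is convex on $[0,1)$, and being continuous on $[0,1]$, convex on all of $[0,1]$.

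The hard part is exactly this convexity of $\psi$, and the reason the argument goes through is twofold: conditioned on the source, the BSC output bias is \emph{affine} in $\theta$, so both entropy terms are $h$ of an affine function of $\theta$; and $\arctanh$ is an odd power series with nonnegative coefficients, so after the change of variable $s=\theta^2$ the derivative becomes a power series in $s$ with sign-definite coefficients. Two routine points I would check along the way: the interchange of $\bE_W$ and $\frac{d}{ds}$, legitimate on each $[0,1-\epsilon]$ since $\big|\frac{d}{ds}h(\tfrac{1+c\sqrt s}{2})\big|$ is bounded there uniformly in $c$; and the endpoint $s=1$, where $\psi'$ may blow up (entropy has infinite boundary slope) but convexity still extends by continuity of $\psi$. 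One could also note that the inequality $\psi(s)\le s\,\psi(1)$ driving the $\BEC$ direction is just the assertion that the KL contraction coefficient of $\BSC_{(1-\sqrt s)/2}$ is at most $s$ (Ahlswede--G\'acs), so that half can be quoted directly; the $\BSC$ direction, however, genuinely needs the full convexity of $\psi$.
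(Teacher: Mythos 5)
Your proof is correct, but note that the paper itself offers no argument for this lemma: it is imported verbatim from \cite{roozbehani2019low}, so there is no in-paper proof to match, and what you have written is a legitimate self-contained derivation. Your route reduces both extremality statements to a single scalar fact: with $\psi(s)=I\bigl(\pi,\BSC_{(1-\sqrt s)/2}\circ R\bigr)$, the identities $I(\pi,W\circ R)=\bE_\Theta[\psi(\Theta^2)]$ (condition on the mixing parameter, which is independent of the source), $I(\pi,\BEC_{1-\eta}\circ R)=\eta\,\psi(1)$ (condition on the erasure flag) and $I(\pi,\BSC_{1/2-\sqrt\eta/2}\circ R)=\psi(\eta)$ are all correct, and convexity of $\psi$ with $\psi(0)=0$ then gives the BSC half by Jensen and the BEC half by the chord bound $\psi(s)\le s\,\psi(1)$. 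The convexity computation is also sound: $\psi(s)=h\bigl(\tfrac{1+\bar\rho\sqrt s}{2}\bigr)-\bE\bigl[h\bigl(\tfrac{1+\rho_U\sqrt s}{2}\bigr)\bigr]$, and your series expansion $\psi'(s)=\tfrac12\sum_{k\ge0}\tfrac{s^k}{2k+1}\bigl(\bE[\rho_U^{2k+2}]-\bar\rho^{2k+2}\bigr)$ has nonnegative coefficients by Jensen, so $\psi'$ is nondecreasing on $[0,1)$; the two analytic caveats you flag (differentiation under the expectation, justified by the uniform bound on $[0,1-\epsilon]$, and extending convexity to $s=1$ by continuity) are exactly the right ones and are handled adequately. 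Your closing observation is also accurate: the BEC direction alone is the statement $\eta_{\KL}(\BSC_{(1-\sqrt s)/2})\le s$ and could be quoted, whereas the BSC direction genuinely uses the full convexity (equivalently, that a mixture of BSCs is less noisy than the single BSC with the matched $\chi^2$-capacity), which is where your power-series argument does the real work.
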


\subsection{Binary symmetric model} \label{sec:chi2-multi-sdpi:bms-boht}
We consider the more general setting where $B: \{\pm\}\to \{\pm\}^{r-1}$ is a BMS channel. Recall that this is the setting for the binary symmetric BOHT model.
\begin{proposition} \label{prop:chi2-multi-sdpi-bms}
  Suppose $B: \{\pm\} \to \{\pm\}^{r-1}$ together with the sign flip $\{\pm\}^{r-1}\to \{\pm\}^{r-1}$ is a BMS channel.
  Then
  \begin{align}
    \eta^{(m,s)}_{\chi^2}(B) = \sup_{0<\epsilon\le 1} f_B(\epsilon)
  \end{align}
  where
  \begin{align} \label{eqn:prop-chi2-multi-sdpi-bms:f_B}
    f_B(\epsilon) := \frac 1{(r-1)\epsilon} C_{\chi^2}\left(\BEC_{1-\epsilon}^{\times (r-1)} \circ B\right)
  \end{align}
  is a polynomial of degree $r-2$.
\end{proposition}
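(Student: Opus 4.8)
The plan is to establish the two inequalities $\eta^{(m,s)}_{\chi^2}(B) \le \sup_{0<\epsilon\le 1} f_B(\epsilon)$ and $\eta^{(m,s)}_{\chi^2}(B) \ge \sup_{0<\epsilon\le 1} f_B(\epsilon)$ separately, using the extremality of the BEC among BMS channels of a given $\chi^2$-capacity. For the lower bound, I would simply plug $P = \BEC_{1-\epsilon}$ into the definition of $\eta^{(m,s)}_{\chi^2}(B)$: since $C_{\chi^2}(\BEC_{1-\epsilon}) = \epsilon$ (reading off the $\theta$-component, $\theta=1$ with probability $\epsilon$ and $\theta=0$ otherwise, so $\bE\theta^2 = \epsilon$), the ratio $\frac{C_{\chi^2}(P^{\times(r-1)}\circ B)}{(r-1)C_{\chi^2}(P)}$ evaluates to exactly $f_B(\epsilon)$, and taking the supremum over $\epsilon \in (0,1]$ gives the lower bound.

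For the upper bound, the key step is to reduce an arbitrary BMS channel $P$ to a mixture of BECs. Write $P$ in its BSC-mixture / $\Delta$-component form, but then use Lemma~\ref{lem:bms-extremal-less-noisy}: among BMS channels with a fixed $\chi^2$-capacity, the BEC is the \emph{least} noisy, i.e.\ $W \le_{\ln} \BEC_{1-C_{\chi^2}(W)}$. I would like to conclude that $P^{\times(r-1)} \circ B \le_{\ln} \BEC_{1-C_{\chi^2}(P)}^{\times(r-1)} \circ B$ and hence, by the $\chi^2$-capacity monotonicity of $\le_{\ln}$ (from \cite{makur2018comparison}), $C_{\chi^2}(P^{\times(r-1)}\circ B) \le C_{\chi^2}(\BEC_{1-C_{\chi^2}(P)}^{\times(r-1)}\circ B) = (r-1)\,C_{\chi^2}(P)\,f_B(C_{\chi^2}(P)) \le (r-1)\,C_{\chi^2}(P)\,\sup_{0<\epsilon\le 1} f_B(\epsilon)$. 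The tensorization property of the less-noisy preorder gives $P^{\times(r-1)} \le_{\ln} \BEC_{1-C_{\chi^2}(P)}^{\times(r-1)}$ from the single-letter statement $P \le_{\ln} \BEC_{1-C_{\chi^2}(P)}$, and the composition property then pushes this through $B$ on the left. One subtlety: $C_{\chi^2}(P)$ could exceed $1$ for a general BMS channel? No — $C_{\chi^2}(P) = \bE\theta^2 \le 1$ always, so $\epsilon := C_{\chi^2}(P) \in (0,1]$ and the supremum is over the correct range; I should note this explicitly. A second subtlety is the edge case $C_{\chi^2}(P)=0$, which is excluded by the normalization $0 < C_f(P)$ in the definition of $\eta^{(m,s)}_{\chi^2}$.

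Finally, I would verify the claim that $f_B(\epsilon)$ is a polynomial in $\epsilon$ of degree $r-2$. Using the BSC-mixture representation, $\BEC_{1-\epsilon}^{\times(r-1)} \circ B$ is obtained by independently erasing each of the $r-1$ outputs of $B$ with probability $1-\epsilon$; conditioning on the erasure pattern $S \subseteq [r-1]$ of non-erased coordinates (probability $\epsilon^{|S|}(1-\epsilon)^{r-1-|S|}$), the surviving channel is a fixed BMS channel depending only on $S$, so $C_{\chi^2}(\BEC_{1-\epsilon}^{\times(r-1)}\circ B) = \sum_{S\subseteq[r-1]} \epsilon^{|S|}(1-\epsilon)^{r-1-|S|}\, C_{\chi^2}(B|_S)$, where the $S=\emptyset$ term vanishes since observing nothing gives zero capacity. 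Dividing by $(r-1)\epsilon$ cancels one factor of $\epsilon$ (every nonzero term has $|S|\ge 1$), leaving a polynomial; the top-degree contribution comes from $|S|=r-1$, giving degree $(r-1)-1 = r-2$. In the special case $B = B_{r,\lambda}$ one then checks, by symmetry of $B_{r,\lambda}$ under coordinate permutations, that $C_{\chi^2}(B_{r,\lambda}|_S)$ depends only on $i=|S|$ and equals $\frac{\lambda^2}{\lambda + (1-\lambda)2^{1-i}}$ (this is the $\chi^2$-capacity of the single BSC obtained after marginalizing to $i$ coordinates), which recovers the formula $f_{r,\lambda}$ of \eqref{eqn:thm-boht-non-recon-special:chi2-f} and hence Proposition~\ref{prop:chi2-multi-sdpi}.

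The main obstacle I anticipate is making the less-noisy reduction fully rigorous across the tensor product and the composition with $B$: one must be careful that $\le_{\ln}$ tensorizes (this is the nontrivial input from \cite{sutter2014universal,polyanskiy2017strong}) and that the resulting channel comparison is preserved under post-composition by $B$ and then evaluated correctly in $\chi^2$-capacity. Once the less-noisy machinery is in place the rest is bookkeeping, and the polynomiality claim is an elementary erasure-decomposition argument.
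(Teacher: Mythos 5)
Your proposal is correct and follows essentially the same route as the paper's proof: BEC extremality in the less-noisy order under a $\chi^2$-capacity constraint (Lemma~\ref{lem:bms-extremal-less-noisy}), tensorization and composition of $\le_{\ln}$, the $\chi^2$-capacity monotonicity of \cite{makur2018comparison}, and an erasure-pattern (BSC-mixture) decomposition with vanishing constant term for the degree-$(r-2)$ polynomiality claim. Splitting the equality into two explicit inequalities and the check of the special case $B=B_{r,\lambda}$ are only presentational additions, not a different argument.
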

\begin{proof}
  Let $P$ be a non-trivial BMS channel and $\epsilon=C_{\chi^2}(P)$.
  By Lemma~\ref{lem:bms-extremal-less-noisy}, $P\le_{\ln} \BEC_{1-\epsilon}$.
  Because less-noisy preorder is preserved under tensorization and composition, we have
  \begin{align}
    P^{\times (r-1)}\circ B \le_{\ln} \BEC_{1-\epsilon}^{\times (r-1)} \circ B.
  \end{align}
  Then by \cite[Theorem 1]{makur2018comparison} we have
  \begin{align}
    C_{\chi^2}(P^{\times (r-1)}\circ B) \le C_{\chi^2}\left(\BEC_{1-\epsilon}^{\times (r-1)} \circ B\right).
  \end{align}
  So
  \begin{align}
    \eta^{(m,s)}_{\chi^2}(B) = \sup_{P} \frac{C_{\chi^2}(P^{\times (r-1)}\circ B)}{(r-1)C_{\chi^2}(P)}
    = \sup_{0<\epsilon\le 1} \frac{C_{\chi^2}\left(\BEC_{1-\epsilon}^{\times (r-1)} \circ B\right)}{(r-1)C_{\chi^2}(\BEC_{1-\epsilon})}
    = \sup_{0<\epsilon\le 1} f_B(\epsilon).
  \end{align}

  It remains to prove that $f_B(\epsilon)$ is a polynomial of degree of $r-2$.
  By BSC mixture representation, we have
  \begin{align}
    \epsilon f_B(\epsilon) &= \frac 1{r-1}C_{\chi^2}\left(\BEC_{1-\epsilon}^{\times (r-1)}\circ B\right)\\
    \nonumber &= \frac 1{r-1}\sum_{x\in \{0,1\}^{r-1}} \left(\prod_{i\in [r-1]} \left(\epsilon^{x_i}(1-\epsilon)^{1-x_i}\right)\right) C_{\chi^2}\left( \left(\prod_{i\in [r-1]} \BSC_{(1-x_i)/2}\right) \circ B\right),
  \end{align}
  which is a degree-$(r-1)$ polynomial.
  Furthermore, the constant coefficient of $\epsilon f_B(\epsilon)$ is
  \begin{align}
    \frac 1{r-1}C_{\chi^2}\left(\BSC_{1/2}^{\times (r-1)}\circ B\right)=0.
  \end{align}
  So $f_B(\epsilon)$ is a polynomial of degree $r-2$.
\end{proof}

For $r=2$, $f_B(\epsilon)$ is a constant, and we get $\eta^{(m,s)}_{\chi^2}(B) = f_B(1) = C_{\chi^2}(B)$.
For $r=3$, $f_B(\epsilon)$ is a linear function, so $\eta^{(m,s)}_{\chi^2}(B) = \max\{f_B(0), f_B(1)\}$.

Prop.~\ref{prop:chi2-multi-sdpi-bms} immediately leads to the following corollary.
\begin{corollary}[Non-reconstruction for binary symmetric BOHT] \label{coro:boht-non-recon-bms}
  Consider a binary symmetric BOHT model $\BOHT(2,r,\Unif(\{\pm\}),B,D)$ where $\bE_{t\sim D}t=d$.
  If
  \begin{align}
    (r-1) d \sup_{0<\epsilon\le 1} f_B(\epsilon) < 1,
  \end{align}
  where $f_B$ is defined in \eqref{eqn:prop-chi2-multi-sdpi-bms:f_B}, then reconstruction is impossible.
\end{corollary}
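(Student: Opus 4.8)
The plan is to derive this directly from Proposition~\ref{prop:chi2-multi-sdpi-bms} and the $\chi^2$ branch of Theorem~\ref{thm:boht-non-recon}\ref{item:thm-boht-non-recon:bms}; the corollary is essentially a restatement. First I would observe that, because the model $\BOHT(2,r,\Unif(\{\pm\}),B,D)$ is binary symmetric, $B$ together with the coordinate sign flip is a BMS channel, so Proposition~\ref{prop:chi2-multi-sdpi-bms} applies and gives the identity $\eta^{(m,s)}_{\chi^2}(B) = \sup_{0<\epsilon\le 1} f_B(\epsilon)$, with $f_B$ the polynomial defined in \eqref{eqn:prop-chi2-multi-sdpi-bms:f_B}. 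Substituting this identity into the hypothesis $(r-1)d\,\sup_{0<\epsilon\le 1} f_B(\epsilon) < 1$ yields exactly $(r-1)d\,\eta^{(m,s)}_{\chi^2}(B) < 1$, which is hypothesis \eqref{eqn:thm-boht-non-recon:bms-chi2} of Theorem~\ref{thm:boht-non-recon}\ref{item:thm-boht-non-recon:bms}.

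Next I would check that the remaining hypotheses of Theorem~\ref{thm:boht-non-recon}\ref{item:thm-boht-non-recon:bms} are met: we have $q=2$, $\pi=\Unif(\{\pm\})$, $\bE_{t\sim D}t=d$, and the model is binary symmetric by assumption. Unlike the SKL branch, the $\chi^2$ branch carries no integrability side-condition, and indeed none is needed: for every BMS channel $P$ one has $C_{\chi^2}(P)=\bE\,\theta^2 \le 1 < \infty$, in particular $C_{\chi^2}(M_0)=C_{\chi^2}(\Id)\le 1$, which is all that the iteration $C_{\chi^2}(M_{k+1}) \le (r-1)d\,\eta^{(m,s)}_{\chi^2}(B)\,C_{\chi^2}(M_k)$ requires to force $C_{\chi^2}(M_k)\to 0$ and hence non-reconstruction. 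Invoking Theorem~\ref{thm:boht-non-recon}\ref{item:thm-boht-non-recon:bms} then finishes the argument.

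There is essentially no obstacle here: all the substance was already done upstream --- in Proposition~\ref{prop:chi2-multi-sdpi-bms} (via the extremal-channel comparison Lemma~\ref{lem:bms-extremal-less-noisy} and the less-noisy/$\chi^2$ monotonicity of \cite{makur2018comparison}) and in the proof of Theorem~\ref{thm:boht-non-recon} (via subadditivity of $\chi^2$-capacity under $\star$-convolution from \cite{abbe2019subadditivity} together with the BP recursion). The only point to be careful about is that the corollary assumes a \emph{strict} inequality, so the critical case $(r-1)d\,\sup_{0<\epsilon\le 1} f_B(\epsilon) = 1$ --- which would need the separate fixed-point/purity argument of Section~\ref{sec:chi2-multi-sdpi:critical} --- never arises and need not be treated.
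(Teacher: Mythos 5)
Your proposal is correct and follows the paper's own route exactly: the corollary is proved by combining Proposition~\ref{prop:chi2-multi-sdpi-bms} (which identifies $\sup_{0<\epsilon\le 1} f_B(\epsilon)$ with $\eta^{(m,s)}_{\chi^2}(B)$) with the $\chi^2$ branch \eqref{eqn:thm-boht-non-recon:bms-chi2} of Theorem~\ref{thm:boht-non-recon}\ref{item:thm-boht-non-recon:bms}. Your additional remarks about the boundedness of $C_{\chi^2}$ and the strictness of the inequality are accurate but only make explicit what the paper leaves implicit.
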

\begin{proof}
  By Theorem~\ref{thm:boht-non-recon}\ref{item:thm-boht-non-recon:bms} and Prop.~\ref{prop:chi2-multi-sdpi-bms}.
\end{proof}

\subsection{Special BOHT model} \label{sec:chi2-multi-sdpi:special-boht}
We apply Prop.~\ref{prop:chi2-multi-sdpi-bms} to the special case where $B=B_{r,\lambda}$.
\begin{proof}[Proof of Prop.~\ref{prop:chi2-multi-sdpi}]
  It suffices to prove that $f_{B_{r,\lambda}} = f_{r,\lambda}$, where LHS is defined in \eqref{eqn:prop-chi2-multi-sdpi-bms:f_B} and RHS is defined in \eqref{eqn:thm-boht-non-recon-special:chi2-f}.
  We have
  \begin{align}
    &~f_{B_{r,\lambda}}(\epsilon) \\
    \nonumber =&~ \frac 1{(r-1)\epsilon}\sum_{x\in \{0,1\}^{r-1}} \left(\prod_{i\in [r-1]} \left(\epsilon^{x_i}(1-\epsilon)^{1-x_i}\right)\right) C_{\chi^2}\left( \left(\prod_{i\in [r-1]} \BSC_{(1-x_i)/2}\right) \circ B_{r,\lambda}\right) \\
    \nonumber =&~ \frac 1{(r-1)\epsilon}\sum_{1\le i\le r-1} \binom{r-1}i \epsilon^i (1-\epsilon)^{r-1-i} C_{\chi^2}\left( \left(\Id^{\times i} \times 0^{\times (r-1-i)}\right) \circ B_{r,\lambda}\right) \\
    \nonumber =&~ \frac 1{r-1}\sum_{1\le i\le r-1} \binom{r-1}i \epsilon^{i-1} (1-\epsilon)^{r-1-i} C_{\chi^2}\left( \left(\Id^{\times i} \times 0^{\times (r-1-i)}\right) \circ B_{r,\lambda}\right),
  \end{align}
  where $\Id=\BSC_0$ denotes the identity channel and $0=\BSC_{1/2}$ denotes the trivial channel.
  Using the BP recursion formula established in Section~\ref{sec:non-recon-special}, we have
  \begin{align}
    C_{\chi^2}\left( \left(\Id^{\times i} \times 0^{\times (r-1-i)}\right) \circ B_{r,\lambda}\right)
    =\frac{\lambda^2}{\lambda+(1-\lambda)2^{1-i}}
  \end{align}
  for $1\le i\le r-1$.
  Therefore
  \begin{align}
    f_{B_{r,\lambda}}(\epsilon) = \frac 1{r-1}\sum_{1\le i\le r-1} \binom{r-1}i \epsilon^{i-1} (1-\epsilon)^{r-1-i}  \frac{\lambda^2}{\lambda+(1-\lambda)2^{1-i}} = f_{r,\lambda}(\epsilon).
  \end{align}
  This finishes the proof.
\end{proof}

\begin{proof}[Proof of Lemma~\ref{lem:chi2-multi-sdpi-large-lambda}]
  Note that $f_{r,\lambda}(0) = \lambda^2$.
  Let us compute $f_{r,\lambda}'(\epsilon)$.
  \begin{align}
    f_{r,\lambda}'(\epsilon) =&~ \frac 1{r-1} \sum_{1\le i\le r-1} \binom{r-1}i\left(\frac {d}{d\epsilon} \left((1-\epsilon)^{r-1-i} \epsilon^{i-1}\right)\right) \frac{\lambda^2}{\lambda+(1-\lambda)2^{1-i}} \\
    \nonumber =&~ \frac 1{r-1} \sum_{1\le i\le r-1} \binom{r-1}i \left((i-1)(1-\epsilon)^{r-1-i} \epsilon^{i-2}\right.\\
    \nonumber &~\left.- (r-1-i) (1-\epsilon)^{r-2-i} \epsilon^{i-1} \right) \frac{\lambda^2}{\lambda+(1-\lambda)2^{1-i}}\\
    \nonumber =&~ \frac 1{r-1} \sum_{1\le i\le r-1} \binom{r-1}i (i-1)(1-\epsilon)^{r-1-i} \epsilon^{i-2} \frac{\lambda^2}{\lambda+(1-\lambda)2^{1-i}} \\
    \nonumber &~- \frac 1{r-1} \sum_{2\le i\le r} \binom{r-1}{i-1} (r-i)(1-\epsilon)^{r-1-i} \epsilon^{i-2} \frac{\lambda^2}{\lambda+(1-\lambda)2^{2-i}} \\
    \nonumber =&~ \frac 1{r-1} \sum_{2\le i\le r-1} \binom{r-1}i (1-\epsilon)^{r-1-i} \epsilon^{i-2} \left( \frac{(i-1) \lambda^2}{\lambda+(1-\lambda)2^{1-i}} - \frac{i \lambda^2}{\lambda+(1-\lambda)2^{2-i}} \right).
  \end{align}
  When $\lambda\in \left[\frac 15, 1\right]$, we have
  \begin{align}
    \frac{i-1}{\lambda+(1-\lambda)2^{1-i}} \le \frac{i}{\lambda+(1-\lambda)2^{2-i}}
  \end{align}
  for all integer $i\ge 2$, and the inequality is strict for $i\in \{2\}\cup \bZ_{\ge 5}$.
  Therefore $f_{r,\lambda}'(\epsilon) < 0$ for all $\epsilon\in[0,1]$.
  So for $0<\epsilon\le 1$ we have $f_{r,\lambda}(\epsilon) < f_{r,\lambda}(0) = \lambda^2$.
\end{proof}
We remark that for fixed $r$, the range $\lambda\in\left[\frac 15,1\right]$ could be improved. For example, for $r=5$ and $\lambda\in\left[\frac 17,1\right]$ we have $f_{r,\lambda}(\epsilon)\le \lambda^2$ for all $\epsilon\in[0,1]$.

\subsection{Handle the critical case} \label{sec:chi2-multi-sdpi:critical}
In this section we prove the critical case of Theorem~\ref{thm:boht-non-recon-special}\ref{item:thm-boht-non-recon-special-small-d}, that for $\lambda\in \left[\frac 15,1\right]$ and $(r-1)d\lambda^2=1$, reconstruction is impossible.

The proof idea is similar to the SKL case (Section~\ref{sec:skl-multi-sdpi:critical}). Because $\chi^2$-capacity is a bounded function for BMS channels, the proof is easier than the SKL case.
\begin{proof}[Proof of Theorem~\ref{thm:boht-non-recon-special}\ref{item:thm-boht-non-recon-special-small-d} critical case]
Suppose for the sake of contraction that reconstruction holds.
Then the limit channel $M_\infty := \lim_{k\to \infty} M_k$ is non-trivial.
Let $\epsilon := C_{\chi^2}(M_\infty) >0$.
Then $M_\infty \le_{\ln} \BEC_{1-\epsilon}$, and thus $\BP(M_\infty) \le_{\ln} \BP(\BEC_{1-\epsilon})$. So
\begin{align}
  & C_{\chi^2}(\BP(M_\infty)) \le C_{\chi^2}(\BP(\BEC_{1-\epsilon}))
  \le d C_{\chi^2}\left(\BEC_{1-\epsilon}^{\times (r-1)}\circ B_{r,\lambda}\right) \\
  & \le (r-1) d \epsilon f_{r,\lambda}(\epsilon) < (r-1) d \lambda^2 \epsilon
  = \epsilon = C_{\chi^2}(M_\infty),
\end{align}
where the first step is by \cite[Theorem 1]{makur2018comparison}, the second step is by subadditivity, the third step is by $f_{r,\lambda}=f_{B_{r,\lambda}}$, the fourth step is by Lemma~\ref{lem:chi2-multi-sdpi-large-lambda}, the fifth step is by $(r-1)d\lambda^2=1$, and the sixth step is by definition of $\epsilon$.
On the other hand, $\BP(M_\infty)=M_\infty$, so $C_{\chi^2}(\BP(M_\infty)) = C_{\chi^2}(M_\infty)$. Contradiction.
\end{proof}

\section{Weak recovery threshold for HSBM} \label{sec:hsbm}
In this section we prove Theorem~\ref{thm:hsbm-special}.
Our proof uses a reduction from HSBM to BOHT, which works in a very general setting.
Let us define the general HSBM.
\begin{definition}[General HSBM \cite{angelini2015spectral,stephan2022sparse}] \label{defn:hsbm}
  Let $n\ge 1$ (number of vertices), $q\ge 2$ (number of communities), $r\ge 2$ (hyperedge size) be integers. Let $\pi \in \cP([q])$ be a distribution with full support.
  Let $\bfA \in \left(\bR_{\ge 0}^q\right)^{\otimes r}$ be a tensor satisfying
  \begin{align} \label{eqn:defn-hsbm-sym-tensor}
    a_{i_1,\ldots,i_r} = a_{i_{\sigma(1)},\ldots,i_{\sigma(r)}}
  \end{align}
  for any $i_1,\ldots, i_r\in [q]$, $\sigma\in \Aut([r])$.
  The hypergraph stochastic block model $\HSBM(n,q,r,\pi,\bfA)$ is defined as follows:
  Let $V = [n]$ be the set of vertices.
  Generate a random label $X_u$ for all vertices $u\in V$ i.i.d.~$\sim \pi$.
  Then for every $S=\{u_1,\ldots,u_r\}\in \binom{V}r$, add hyperedge $S$ to the hypergraph with probability $\frac{a_{X_{u_1},\ldots,X_{u_r}}}{\binom n{r-1}}$.
  The resulting pair $(X, G=(V,E))$ is the output of the model.
\end{definition}
Clearly the above definition generalizes the model $\HSBM(n,2,r,a,b)$ defined in the introduction.

Let $(X,G) \sim \HSBM(n,q,r,\pi,\bfA)$. We say the model admits weak recovery if there exists an estimator outputting a subset $S\subseteq V$ such that for some $\epsilon>0$, with probability $1-o(1)$, there exists $i,j\in [q]$ such that
\begin{align} \label{eqn:defn-hsbm-rec-prob-weak}
  \frac{\#\{v\in S: X_v=i\} }{\#\{v\in V: X_v=i\}} - \frac{\#\{v\in S: X_v=j\} }{\#\{v\in V: X_v=j\}} \ge \epsilon.
\end{align}
For $\HSBM(n,2,r,a,b)$, this definition agrees with the one we gave in the introduction.

In the HSBM, the expected degree (number of hyperedges containing a vertex) of a vertex with label $i\in [q]$ is $d_i \pm o(1)$, where
\begin{align} \label{eqn:deg-i}
  d_i = \sum_{i_1,\ldots,i_{r-1}\in [q]} a_{i,i_1,\ldots,i_{r-1}} \prod_{j\in [r-1]} \pi_{i_j}.
\end{align}
If $d_i\ne d_j$ for some $i,j\in [q]$, we can distinguish community $i$ and $j$ using a classifier based on degree, which trivially solves the weak recovery problem.
Therefore, we make the following standard assumption.
\begin{condition} \label{cond:hsbm-deg-indis}
  We say the model $\HSBM(n,q,r,\pi,\bfA)$ is degree indistinguishable if $d_i=d_j$ for all $i,j\in [q]$, where $d_i$ is defined in Eq.~\eqref{eqn:deg-i}.
  For such models, we define $d=d_i$ for any $i$.
\end{condition}

For a degree indistinguishable HSBM, the local neighborhood of any vertex corresponds to a BOHT model.
This relationship was first shown in \cite{massoulie2014community,mossel2015reconstruction} in the case of two-community symmetric SBMs, and later generalized to various settings \cite{bordenave2015non,caltagirone2017recovering,stephan2019robustness,stephan2022non,gu2020non,chin2020optimal,chin2021optimal,mossel2022exact,pal2021community,stephan2022sparse}.
\begin{proposition}[HSBM-BOHT coupling {\cite[Prop.~3]{stephan2022sparse}}] \label{prop:hsbm-boht-coupling}
  Let $(X,G) \sim \HSBM(n,q,r,\pi,\bfA)$ be a model satisfying Condition~\ref{cond:hsbm-deg-indis}.
  Let $v\in V$ and $k=c \log n$ for some small enough constant $c>0$ not depending on $n$. Let $B(v,k)$ be the set of vertices with distance $\le k$ to $v$.

  Let $(T,\sigma) \sim \BOHT(q,r,\pi,M,\Pois(d))$, and
  \begin{align}
    M_{i,(i_1,\ldots,i_{r-1})} = \frac 1d a_{i,i_1,\ldots,i_{r-1}} \prod_{j\in [r-1]} \pi_j.
  \end{align}
  Let $\rho$ be the root of $T$, and $T_k$ be the set of vertices at distance $\le k$ to $\rho$.

  Then $(G|_{B(v,k)}, X_{B(v,k)})$ can be coupled to $(T_k, \sigma_{T_k})$ with $o(1)$ TV distance.
\end{proposition}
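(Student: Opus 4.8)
The plan is to run a breadth-first exploration of the hyperball $B(v,k)$ in $G$ and to couple it, generation by generation, with a breadth-first construction of the Poisson Galton--Watson hypertree $(T_k,\sigma_{T_k})$ from $\BOHT(q,r,\pi,M,\Pois(d))$. Since $\bP[X_v=i]=\pi_i=\bP[\sigma_\rho=i]$, it suffices to couple the two explorations conditionally on a common root label. First I would record the one structural estimate behind the coupling: the associated hypertree has mean offspring $\alpha=(r-1)d$, so for $c$ chosen small enough the number of vertices within distance $k=c\log n$ of $v$ in $G$ (and of $\rho$ in $T$) is at most $n^{o(1)}$ with probability $1-o(1)$; call this event $\cE_1$. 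On $\cE_1$ the set $U$ of not-yet-revealed vertices always has size $n-n^{o(1)}$, and its empirical label distribution stays within $o(1)$ of $\pi$ throughout the exploration.

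Next I would analyze a single exploration step. Let $u$ be a boundary vertex with label $i$; conditioned on everything revealed so far, each unordered $(r-1)$-subset $\{w_1,\dots,w_{r-1}\}$ of fresh vertices forms a hyperedge with $u$ independently with probability $a_{i,X_{w_1},\dots,X_{w_{r-1}}}/\binom n{r-1}$. Grouping these Bernoulli variables by the label multiset $\{i_1,\dots,i_{r-1}\}$ of the new endpoints, the number of new hyperedges of a given type has mean $(1+o(1))\,a_{i,i_1,\dots,i_{r-1}}\prod_j\pi_{i_j}$; summing over types and invoking Condition~\ref{cond:hsbm-deg-indis} gives total mean $(1+o(1))\,d_i=(1+o(1))\,d$, the same for every $i$. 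A Le Cam / Chen--Stein bound shows this family of Bernoulli variables is within TV distance $O\!\big(\sum_e p_e^2\big)=O(n^{-(r-1)})$ of a Poisson point process; conditioning on its cardinality, each revealed hyperedge gets an exchangeable endpoint-label tuple with law $M_{i,\cdot}$, where $M_{i,(i_1,\dots,i_{r-1})}=\frac1d a_{i,i_1,\dots,i_{r-1}}\prod_j\pi_{i_j}$. Thus one step of the $G$-exploration agrees with one step of the $\BOHT$ generation up to TV error $o(n^{-1})$. (Here one also verifies that $M$ is a legitimate broadcast kernel obeying \eqref{eqn:defn-boht-prob-kernel}, which follows from the symmetry of $\bfA$ together with Condition~\ref{cond:hsbm-deg-indis}.)

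The explored subgraph can fail to be an $r$-uniform linear hypertree only in two ways: a newly revealed hyperedge meets an already-explored vertex, or two revealed hyperedges share at least two vertices. On $\cE_1$ the expected number of such events per step is at most $(\text{explored size})\cdot O(1)/n=n^{o(1)-1}$, so summing over the $n^{o(1)}$ steps, the probability $\cE_2$ of any collision is $o(1)$. On $\cE_1\cap\cE_2^c$ the revealed structure is exactly a linear hypertree, and the step-by-step coupling matches it with $(T_k,\sigma_{T_k})$.

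Putting it together,
\begin{align*}
  \TV\big((G|_{B(v,k)},X_{B(v,k)}),(T_k,\sigma_{T_k})\big)\le \bP[\cE_1^c]+\bP[\cE_2]+\big(\text{\# steps}\big)\cdot o(n^{-1})=o(1).
\end{align*}
The main obstacle is the middle step: making the per-step Poisson approximation rigorous while simultaneously tracking that (a) the mean offspring equals $d$ for every label class, which is exactly where degree-indistinguishability enters; (b) the $o(1)$ drift of the unexplored-label empirical measure away from $\pi$ contributes only lower-order TV error; and (c) the unordered, symmetric nature of hyperedges is matched consistently with the definition of $M$ and with \eqref{eqn:defn-boht-prob-kernel}. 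Everything else is a routine adaptation to hypergraphs of the graph ($r=2$) coupling of \cite{mossel2015reconstruction,mossel2018proof}.
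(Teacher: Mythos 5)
The paper does not prove this proposition at all: it is imported verbatim as \cite[Prop.~3]{stephan2022sparse}, so there is no in-paper argument to compare against. Your breadth-first exploration coupling (subcritical-growth bound on $|B(v,k)|$ for $k=c\log n$, per-step Poisson approximation of the hyperedge counts with endpoint-label law $M_{i,\cdot}$ using degree-indistinguishability, and a union bound over collision events to get linear-hypertree structure) is exactly the standard argument behind such local-weak-convergence statements, i.e.\ the hypergraph adaptation of the $r=2$ coupling of \cite{mossel2015reconstruction}, and it is sound in outline. The only place that needs more care than your sketch suggests is that what must stay close to $\pi$ is the \emph{conditional} law of the unexplored labels given the revealed structure and the revealed absence of hyperedges (not merely the empirical label frequencies), and that $|B(v,k)|$ is $n^{\delta}$ for a small constant $\delta$ rather than $n^{o(1)}$ --- both are handled by routine estimates and do not affect the conclusion.
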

In the setting of Prop.~\ref{prop:hsbm-boht-coupling}, we say the model $\BOHT(q,r,\pi,M,\Pois(d))$ is the BOHT model corresponding to $\HSBM(n,q,r,\pi,\bfA)$.

Now we can state the general reduction. This reduction was first established by \cite{mossel2015reconstruction} in the case of two-community symmetric SBMs, and later generalized to various settings \cite{gu2020non,mossel2022exact}.
\begin{theorem}[{\cite[Theorem 5.15]{gu2023channel}}]  \label{thm:hsbm-reduction}
  Let $\HSBM(n,q,r,\pi,\bfA)$ be a model satisfying Condition~\ref{cond:hsbm-deg-indis}.
  Let $\BOHT(q,r,\pi,M,\Pois(d))$ be the corresponding BOHT model.
  If reconstruction for the BOHT model is impossible, then weak recovery for the HSBM is impossible.
\end{theorem}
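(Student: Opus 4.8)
The plan is to prove the contrapositive of this hypergraph analogue of the Mossel--Neeman--Sly reduction: assuming weak recovery for $\HSBM(n,q,r,\pi,\bfA)$ (which we assume satisfies Condition~\ref{cond:hsbm-deg-indis}) is possible, I will show reconstruction for the corresponding $\BOHT(q,r,\pi,M,\Pois(d))$ is possible, i.e.\ $\lim_{k\to\infty} I(\sigma_\rho;T_k,\sigma_{L_k})>0$. For readability I carry this out for $q=2$, $\pi=\Unif(\{\pm\})$ (the case needed for Theorem~\ref{thm:hsbm-special}); general $q$ differs only by extra bookkeeping tracking which community each vertex lies in. First, from a weak recovery estimator $\wh X=\wh X(G)\in\{\pm\}^n$ --- so that for some $\delta>0$, with probability $1-o(1)$ there is a global sign $s^\star=s^\star(G,X)$ with $\bar Z:=\frac1n\#\{w:\wh X_w=s^\star X_w\}\ge\frac12+\delta$ (shrinking $\delta$ to absorb an $o(1)$) --- I would extract detectability of the relative label of two random vertices: for a uniformly random ordered pair $(u,v)$ of distinct vertices, independent of $(G,X)$, the flip-invariant estimator $\wh S:=\wh X_u\wh X_v$ of $X_uX_v$ satisfies $\bP[\wh S=X_uX_v\mid G,X]=1-2\bar Z(1-\bar Z)-O(1/n)\ge\frac12+2\delta^2-o(1)$ on the good event, hence $\bP[\wh S=X_uX_v]\ge\frac12+\delta^2$ for large $n$; since $X_uX_v\sim\Unif(\{\pm\})$ for every fixed pair, converting this advantage into information (via $\KL(\Ber(p)\,\|\,\Ber(\tfrac12))\gtrsim(p-\tfrac12)^2$ and Jensen) yields a constant $c>0$ with $\bE_{(u,v)}I(X_uX_v;G)\ge c$.

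Next I would \emph{localize}. Fix $k\ge1$ and a vertex $u$, and let $L_k$ be the set of vertices at distance exactly $k$ from $u$ in $G$. The $\HSBM$ law is a product of independent Bernoulli factors, one per potential hyperedge, and a one-line shortest-path argument shows that any hyperedge not contained in $B(u,k)$ can intersect $B(u,k)$ only inside $L_k$ (a hyperedge with a vertex at distance $<k$ and one at distance $>k$ from $u$ would make those two vertices adjacent, impossible). Hence, conditionally on $\bigl(G|_{B(u,k)},X_{L_k}\bigr)$, the labels $X_{B(u,k)\setminus L_k}$ --- in particular $X_u$ --- are independent of everything supported on the complement of $B(u,k)$. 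Combining this with $I(X_u;G,X_v)\le I(X_u;G,X_v,X_{L_k})=I(X_u;G,X_{L_k})+I(X_u;X_v\mid G,X_{L_k})$, noting $I(X_u;G,X_{L_k})=I\bigl(X_u;G|_{B(u,k)},X_{L_k}\bigr)$ by the same Markov property and that for random $v$ the remaining term is $\le\log2\cdot\bP[\mathrm{dist}(u,v)<k]=O_k(1)/n$, I obtain
\begin{align}
  \bE_v\,I(X_u;G,X_v)\ \le\ I\bigl(X_u;G|_{B(u,k)},X_{L_k}\bigr)+o_n(1)\ =:\ J_{n,k}+o_n(1),
\end{align}
where $J_{n,k}$ does not depend on $u$ by vertex-exchangeability of the $\HSBM$, and similarly $I(X_u;G)\le J_{n,k}$.

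Then I would pass to the $\BOHT$ limit. By Proposition~\ref{prop:hsbm-boht-coupling} applied with $k$ fixed, $\bigl(G|_{B(u,k)},X_{B(u,k)}\bigr)$ couples to $\bigl(T_k,\sigma_{T_k}\bigr)$ with total variation $o_n(1)$; since $X_{L_k}$ and the isomorphism type of $G|_{B(u,k)}$ are functions of $\bigl(G|_{B(u,k)},X_{B(u,k)}\bigr)$ and $X_u$ has entropy $\le\log2$, mutual information is continuous along this coupling, so $J_{n,k}\to I(\sigma_\rho;T_k,\sigma_{L_k})$ as $n\to\infty$. Therefore $\limsup_n\bE_{(u,v)}I(X_u;G,X_v)\le I(\sigma_\rho;T_k,\sigma_{L_k})$ and $\limsup_n\bE_u I(X_u;G)\le I(\sigma_\rho;T_k,\sigma_{L_k})$, both of which vanish as $k\to\infty$ under the non-reconstruction hypothesis. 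Finally, using $X_u\perp X_v$ and data processing, $I(X_uX_v;G)\le I(X_u,X_v;G)=I(X_v;G)+I(X_u;G,X_v)$, so $\bE_{(u,v)}I(X_uX_v;G)\to0$, contradicting the lower bound $c>0$ from the first paragraph. This would complete the proof.

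I expect the main obstacle to be the localization step: making the hypergraph factorization / Markov property fully rigorous --- delimiting $L_k$ precisely, handling hyperedges straddling it, and checking that the rare events (the neighbourhood $B(u,k)$ failing to be a linear hypertree, or containing $v$) only cost $o_n(1)$ --- and then ensuring everything survives the iterated limit ($n\to\infty$ first, then $k\to\infty$), including the continuity of mutual information along the $o_n(1)$-coupling of Proposition~\ref{prop:hsbm-boht-coupling}. A secondary point, relevant only for $q\ge3$, is upgrading a set-valued weak-recovery estimator with nontrivial community imbalance into a same-community/different-community test with advantage bounded away from the trivial value, which requires somewhat more bookkeeping than the $q=2$ argument sketched above.
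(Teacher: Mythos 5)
Your overall strategy (contrapositive: weak recovery $\Rightarrow$ nontrivial information about $X_uX_v$ for a random pair $\Rightarrow$ localize to a $k$-neighborhood $\Rightarrow$ couple to BOHT via Prop.~\ref{prop:hsbm-boht-coupling} $\Rightarrow$ contradict non-reconstruction) is exactly the Mossel--Neeman--Sly route that the cited proof in \cite{gu2023channel} follows, and your first step (extracting $\bE_{(u,v)}I(X_uX_v;G)\ge c$) and the final chain-rule bookkeeping are fine. The genuine gap is the localization step. Your ``one-line shortest-path argument'' only shows that \emph{present} hyperedges straddling the boundary meet $B(u,k)$ inside $L_k$; but $G$ also records the \emph{absence} of every potential hyperedge, and an absent hyperedge containing, say, $u$ together with $r-1$ vertices outside $B(u,k)$ has an absence probability that depends jointly on $X_u$ and on outside labels. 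Summing these $\Theta(n^{r-1})$ weak non-edge factors, the labels inside $B(u,k)\setminus L_k$ are \emph{not} exactly conditionally independent of the outside given $(G|_{B(u,k)},X_{L_k})$; the exact Markov property you invoke to write $I(X_u;G,X_{L_k})=I(X_u;G|_{B(u,k)},X_{L_k})$ and to kill $I(X_u;X_v\mid G,X_{L_k})$ is simply false in the HSBM (already in the SBM, $r=2$).

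What is true, and what the cited proof actually uses, is the approximate no-long-range-correlation statement Prop.~\ref{prop:hsbm-approx-cond-indep}: $\bP(X_A\mid X_{B\cup C},G)=(1\pm o(1))\,\bP(X_A\mid X_B,G)$ a.a.s., valid only under the size condition $|A\cup B|=o(\sqrt n)$ and for random separators $B=B(G)$ (which matters here since $B(u,k)$ and $L_k$ are $G$-measurable). Establishing this is the main technical content of the reduction (it is a nontrivial second-moment/planting argument going back to \cite{mossel2015reconstruction}), not a rigor footnote; and one must additionally convert the a.a.s.\ multiplicative $(1\pm o(1))$ control of posteriors into $o(1)$ bounds on the conditional mutual informations you use, before taking $n\to\infty$ and then $k\to\infty$. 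So your proposal as written has a false key step; it becomes correct (and essentially coincides with the paper's cited proof, which combines Prop.~\ref{prop:hsbm-boht-coupling} with Prop.~\ref{prop:hsbm-approx-cond-indep}) once the exact independence claims are replaced by these approximate ones, with the neighborhood size checked to be $o(\sqrt n)$ a.a.s.\ for fixed $k$.
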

The proof of Theorem~\ref{thm:hsbm-reduction} uses Prop.~\ref{prop:hsbm-boht-coupling} and that HSBMs have no long range correlations, a fact first established by \cite{mossel2015reconstruction} in the case of two-community symmetric SBMs.
\begin{proposition}[{\cite[Prop.~5.6]{gu2023channel}}] \label{prop:hsbm-approx-cond-indep}
  Let $(X,G=(V,E))\sim \HSBM(n,q,r,\pi,\bfA)$.
  Let $A=A(G),B=B(G),C=C(G) \subseteq V$ be a (random) partition of $V$ such that $B$ separates $A$ and $C$ in $G$ (i.e., there exists no hyperedges $S\in E$ intersecting both $A$ and $C$).
  If $|A\cup B| = o(\sqrt n)$ a.a.s., then
  \begin{align}
    \bP(X_A | X_{B\cup C}, G) = (1\pm o(1)) \bP(X_A | X_B, G)~\aas
  \end{align}
\end{proposition}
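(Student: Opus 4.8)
The plan is to adapt the no-long-range-correlation argument of \cite{mossel2015reconstruction} for the two-community SBM to the tensor-valued weights of the general \HSBM. The conceptual point is that, conditioned on $G$ and on $X_B$, the only channel of correlation between $X_A$ and $X_C$ runs through the \emph{absence} of hyperedges meeting both $A$ and $C$: since $B$ separates $A$ from $C$, every hyperedge of $G$ incident to $A$ lies inside $A\cup B$, so the likelihood of $G$ factorizes into an $\alpha$-free part, a $\gamma$-free part, and a small ``straddling non-edge'' part that is the sole carrier of the $(A,C)$-coupling.

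Concretely, I would first write Bayes' rule
\[
  \bP(X_A=\alpha\mid X_B=\beta,\,X_C=\gamma,\,G)=\frac{\bigl(\prod_{u\in A}\pi_{\alpha_u}\bigr)\,W(\alpha,\beta,\gamma)}{\sum_{\alpha'}\bigl(\prod_{u\in A}\pi_{\alpha'_u}\bigr)\,W(\alpha',\beta,\gamma)},\qquad W(\alpha,\beta,\gamma):=\bP\bigl(G\mid X_{A\cup B\cup C}=(\alpha,\beta,\gamma)\bigr),
\]
and factor $W$ over potential hyperedges $S\in\binom{V}{r}$. Hyperedges disjoint from $A$ give an $\alpha$-free factor $W_0(\beta,\gamma)$ that cancels in the ratio; hyperedges meeting $A$ but not $C$ lie inside $A\cup B$ (by separation) and give a $\gamma$-free factor $W_1(\alpha,\beta)$; the remaining hyperedges meet both $A$ and $C$, hence are \emph{absent} from $G$ (again by separation), giving
\[
  \Psi(\alpha,\beta,\gamma)=\prod_{\substack{S\in\binom{V}{r}:\ S\cap A\neq\emptyset,\ S\cap C\neq\emptyset}}\Bigl(1-\tfrac{a_{X_S}}{\binom n{r-1}}\Bigr).
\]
The goal is to show $\Psi(\alpha,\beta,\gamma)=\Psi_0(\beta,\gamma)\,e^{\pm o(1)}$ for an $\alpha$-free $\Psi_0$, uniformly over all relabelings $\alpha\in[q]^A$, a.a.s.; granting this, $\Psi_0$ cancels in numerator and denominator, so $\bP(X_A=\alpha\mid X_B,X_C,G)=(1\pm o(1))\,Q(\alpha,X_B)$ where $Q(\alpha,\beta):=\bigl(\prod_{u\in A}\pi_{\alpha_u}\bigr)W_1(\alpha,\beta)/\sum_{\alpha'}\bigl(\prod_{u\in A}\pi_{\alpha'_u}\bigr)W_1(\alpha',\beta)$ is $\gamma$-free and $G$-measurable. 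Averaging against $\bP(X_C=\gamma\mid X_B,G)$ then gives $\bP(X_A=\alpha\mid X_B,G)=(1\pm o(1))\,Q(\alpha,X_B)$ as well, and comparing the two identities yields the claim.

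To estimate $\Psi$, take logarithms. Since $a_{X_S}/\binom n{r-1}=O(n^{-(r-1)})$ and there are $O(|A|\,n^{r-1})$ straddling hyperedges, the quadratic remainders in $\log(1-p_S)=-p_S+O(p_S^2)$ sum to $O(|A|\,n^{-(r-1)})=o(1)$, using $|A|\le|A\cup B|=o(\sqrt n)$ and $r\ge2$, uniformly in $\alpha$. For the linear term $-\binom n{r-1}^{-1}\sum_S a_{X_S}$ I would expand the symmetric tensor $\bfA$ in its Hoeffding/ANOVA decomposition with respect to $\pi$: components whose support touches $A$-coordinates only (resp.\ $C$-coordinates only) contribute factors to $\Psi$ that are $\gamma$-free (resp.\ $\alpha$-free) and get absorbed into $W_1$ or $\Psi_0$. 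The surviving contribution is a sum over bidegrees $(j,\ell)$ with $j,\ell\ge1$ of a form $O(n^{1-j-\ell})\cdot Y^A_j\cdot Z^C_\ell$, with $Y^A_j$ a degree-$j$ multilinear form in bounded quantities over $A$ and $Z^C_\ell$ a degree-$\ell$ centered multilinear form over $C$. Bounding $|Y^A_j|=O(|A|^j)$ crudely and $|Z^C_\ell|=O(n^{\ell/2}\,\mathrm{polylog}\,n)$ a.a.s.\ by standard polynomial concentration for bounded-degree forms in i.i.d.\ labels (together with the a.a.s.\ fact $\#\{v:X_v=i\}=n\pi_i+O(\sqrt n)$) yields $O\bigl((|A|/\sqrt n)^j\,n^{1-j/2-\ell/2}\,\mathrm{polylog}\,n\bigr)=o(1)$ for every $(j,\ell)$ with $j,\ell\ge1$, uniformly over $\alpha$, since $|A|=o(\sqrt n)$.

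The main obstacle is that the partition $(A,B,C)$ is a function of $G$, so $X_C$ cannot be treated as literally i.i.d.\ $\sim\pi$ when one establishes the concentration of the forms $Z^C_\ell$. I would handle this by conditioning on the small set $A\cup B$ together with its induced subgraph and the hyperedges leaving it; after this conditioning $X_C$ is distributed as an \HSBM\ on $C$ reweighted only at the $O(|A\cup B|)=o(\sqrt n)$ vertices adjacent to $A\cup B$, and one compares $Z^C_\ell$ to the corresponding form in a genuinely i.i.d.\ labeling through a coupling that disagrees on $o(\sqrt n)$ coordinates. Controlling this coupling error for the higher-degree forms ($\ell\ge2$) is the delicate point, and it is exactly where the hypothesis $|A\cup B|=o(\sqrt n)$ (rather than merely $O(\sqrt n)$) is essential; for $r=2$ only the bidegree $(1,1)$ term survives and the argument reduces to that of \cite{mossel2015reconstruction}.
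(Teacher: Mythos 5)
The paper does not actually prove this proposition: it is quoted from \cite{gu2023channel} (Prop.~5.6) and the authors explicitly omit the argument, so your proposal can only be judged against the standard route (the Mossel--Neeman--Sly computation that \cite{gu2023channel} generalizes). Your skeleton is that route and is essentially sound: Bayes' rule, factorization of the likelihood over potential hyperedges, the observation that every straddling hyperedge is a non-edge, the expansion $\log(1-p_S)=-p_S+O(p_S^2)$ with the quadratic remainder $O(|A|n^{-(r-1)})=o(1)$, and a Hoeffding/ANOVA split of the linear term into a $\gamma$-free part, an $\alpha$-free part, and cross terms. One bookkeeping remark: your stated ``goal'' $\Psi=\Psi_0(\beta,\gamma)e^{\pm o(1)}$ with $\Psi_0$ $\alpha$-free is false in general, since the $\gamma$-free but $\alpha$-dependent degree-type term $\sum_{u\in A}d_{\alpha_u}$ has size $\Theta(|A|)$ (the proposition does not assume degree indistinguishability); what you actually need, and what your ANOVA absorption does provide, is the two-sided factorization $\Psi=\Phi(\alpha,\beta)\Psi_0(\beta,\gamma)e^{\pm o(1)}$.

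There are two genuine gaps. First, your quantitative bound does not close the borderline cross term. For bidegree $(j,\ell)=(1,1)$ your estimate gives $O\bigl((|A|/\sqrt n)\,n^{1-1/2-1/2}\,\mathrm{polylog}\,n\bigr)=O\bigl((|A|/\sqrt n)\,\mathrm{polylog}\,n\bigr)$, which is \emph{not} $o(1)$ when $|A|$ is only barely $o(\sqrt n)$ (e.g.\ $|A|=\sqrt n/\log\log n$). The fix is to use concentration at scale $\sqrt n\,\omega_n$ with $\omega_n\to\infty$ arbitrarily slowly (Chebyshev suffices; only boundedly many forms appear, no union bound requiring polylog tails) and to balance $\omega_n$ against the rate $\epsilon_n$ implicit in $|A\cup B|\le\epsilon_n\sqrt n$ a.a.s.; this balancing is exactly where the hypothesis $o(\sqrt n)$ (rather than $O(\sqrt n)$) is used---not, as you suggest, in a coupling for the $\ell\ge 2$ forms. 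Second, your proposed handling of the $G$-measurability of $(A,B,C)$---conditioning on the neighborhood of $A\cup B$ and coupling $X_C$ to i.i.d.\ labels---is both unnecessary and, as sketched, unsound: the partition also depends on the hyperedges inside $C$ (the separation property and the geometry of $G$ restricted to $C$), so the conditioning you describe does not make the law of $X_C$ a simple reweighting at $O(|A\cup B|)$ vertices. The clean resolution is to note that, for each fixed $(U,W)$ and each label pattern on it, every centered $C$-form in your decomposition is a polynomial in the label counts $N^C_i=\#\{v\in C: X_v=i\}$ alone; since $a^{(R)}$ is centered in each $C$-coordinate, the form is $O\bigl(\max_i|N^C_i-|C|\pi_i|^{|R|}+n^{|R|-1}\bigr)$ deterministically. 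The global counts $N_i$ concentrate to $n\pi_i\pm O(\sqrt n\,\omega_n)$ by the i.i.d.\ labels alone, independently of $G$, and $N^C_i$ differs from $N_i$ by at most $|A\cup B|=o(\sqrt n)$; hence the estimates hold uniformly over $\alpha$ and over arbitrary $G$-measurable (indeed arbitrary) choices of the partition, and no conditioning or coupling is needed. With these two repairs your argument matches the standard proof; the final averaging over $\gamma\sim\bP(X_C\mid X_B,G)$ also needs the usual step of discarding a conditionally negligible set of bad $\gamma$, which is routine.
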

We omit the proofs of Theorem~\ref{thm:hsbm-reduction} and Prop.~\ref{prop:hsbm-approx-cond-indep} and refer the reader to \cite{gu2023channel},

Using Theorem~\ref{thm:hsbm-reduction} and multi-terminal contraction coefficients we can prove impossibility of weak recovery results for HSBMs.
\begin{corollary}[Impossibility of weak recovery for general HSBM] \label{coro:hsbm-general}
  Let $\HSBM(n,q,r,\pi,\bfA)$ be a model satisfying Condition~\ref{cond:hsbm-deg-indis}.
  Let $\BOHT(q,r,\pi,M,\Pois(d))$ be the corresponding BOHT model.
  If any of the conditions in Theorem~\ref{thm:boht-non-recon}\ref{item:thm-boht-non-recon:general}\ref{item:thm-boht-non-recon:bms} holds, then weak recovery is impossible.
\end{corollary}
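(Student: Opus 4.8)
The plan is to concatenate two results already available in the paper: Theorem~\ref{thm:boht-non-recon}, which turns any of the listed multi-terminal SDPI conditions into non-reconstruction for a BOHT model with mean offspring $d$, and Theorem~\ref{thm:hsbm-reduction}, which transfers non-reconstruction for the corresponding BOHT model into impossibility of weak recovery for the HSBM. Thus there is essentially nothing to prove beyond checking that the object $\BOHT(q,r,\pi,M,\Pois(d))$ produced by Prop.~\ref{prop:hsbm-boht-coupling} is a legitimate BOHT model to which Theorem~\ref{thm:boht-non-recon} applies, and then invoking the two theorems in order.

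First I would verify that $M$, defined by $M_{i,(i_1,\ldots,i_{r-1})} = \frac1d\, a_{i,i_1,\ldots,i_{r-1}}\prod_{j\in[r-1]}\pi_j$, is a valid broadcasting channel, i.e.\ satisfies \eqref{eqn:defn-boht-prob-kernel}. Its row sums are $\sum_{i_1,\ldots,i_{r-1}} M_{i,(i_1,\ldots,i_{r-1})} = d_i/d = 1$ by Condition~\ref{cond:hsbm-deg-indis} together with \eqref{eqn:deg-i}. For the marginal-consistency identity \eqref{eqn:defn-boht-prob-kernel}, fix an output coordinate $\ell$ and a symbol $j$; using the full permutation symmetry \eqref{eqn:defn-hsbm-sym-tensor} of $\bfA$ one may move the pinned coordinate to the first slot, so that $\sum_k \pi_k \sum_{x:\,x_\ell=j} M_{k,x} = \frac{\pi_j}{d}\sum_k \pi_k \sum_{x_2,\ldots,x_{r-1}} a_{j,k,x_2,\ldots,x_{r-1}}\prod_{l\ge 2}\pi_{x_l} = \frac{\pi_j}{d}\, d_j = \pi_j$, again by Condition~\ref{cond:hsbm-deg-indis}. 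Hence $\BOHT(q,r,\pi,M,\Pois(d))$ is well defined, and since $\bE_{t\sim\Pois(d)}\,t = d$ it is precisely of the form required by Theorem~\ref{thm:boht-non-recon}. In the binary-symmetric branch \ref{item:thm-boht-non-recon:bms}, binary symmetry of $M$ is assumed as a hypothesis and is inherited directly from the structure of $\bfA$.

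Then I would apply Theorem~\ref{thm:boht-non-recon} with $B=M$ and $D=\Pois(d)$: whichever of the conditions \eqref{eqn:thm-boht-non-recon:general-kl}--\eqref{eqn:thm-boht-non-recon:bms-skl} is assumed yields that reconstruction for $\BOHT(q,r,\pi,M,\Pois(d))$ is impossible. Finally, Theorem~\ref{thm:hsbm-reduction}, valid under Condition~\ref{cond:hsbm-deg-indis}, converts non-reconstruction for the corresponding BOHT model into impossibility of weak recovery for $\HSBM(n,q,r,\pi,\bfA)$, which finishes the proof. There is no genuine obstacle here: all the analytic content sits in Theorem~\ref{thm:boht-non-recon} and in the reduction of \cite{gu2023channel}; the only care required is the (routine) verification above that the reduction's BOHT model is admissible and carries the correct mean degree $d$.
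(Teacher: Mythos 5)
Your proposal is correct and matches the paper's argument, which proves Corollary~\ref{coro:hsbm-general} simply by combining Theorem~\ref{thm:hsbm-reduction} with Theorem~\ref{thm:boht-non-recon}. Your additional check that $M$ satisfies \eqref{eqn:defn-boht-prob-kernel} (row sums equal to $1$ and marginal consistency via the symmetry \eqref{eqn:defn-hsbm-sym-tensor} and Condition~\ref{cond:hsbm-deg-indis}) is a correct, if routine, verification that the paper leaves implicit in Prop.~\ref{prop:hsbm-boht-coupling}.
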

\begin{proof}
  By Theorem~\ref{thm:hsbm-reduction} and Theorem~\ref{thm:boht-non-recon}.
\end{proof}

In particular, for $\HSBM(n,2,r,a,b)$, we get Theorem~\ref{thm:hsbm-special}.
\begin{proof}[Proof of Theorem~\ref{thm:hsbm-special}]
  By Theorem~\ref{thm:hsbm-reduction} and Theorem~\ref{thm:boht-non-recon-special}.
\end{proof}

\section{Reconstruction below the KS threshold} \label{sec:recon-large-deg}
In this section we prove Theorem~\ref{thm:boht-recon-large-d}, that for the special BOHT model on a regular or Poisson hypertree, reconstruction is possible below the KS threshold for $r\ge 7$ and $d$ large enough.
Our proof is an analysis of evolution of $\chi^2$-capacity (also called magnetization in literature) and Gaussian approximation for large degree.

\subsection{Behavior of \texorpdfstring{$\chi^2$}{chi2}-capacity} \label{sec:recon-large-deg:large-deg-step}

\begin{proposition}[Large degree asymptotics] \label{prop:large-deg-step}
  Fix $r\in \bZ_{\ge 2}$. For any $\epsilon>0$, there exists $d_0=d_0(r,\epsilon)>0$ such that for any $d\ge d_0$ and $\lambda\in [0,1]$ with $(r-1)d \lambda^2 \le 1$, for any BMS channel $P$ we have
  \begin{align}
    | C_{\chi^2}(\BP(P)) - g_{r,d,\lambda} (C_{\chi^2}(P)) | \le \epsilon,
  \end{align}
  where
  \begin{align}
    g_{r,d,\lambda}(x) &:= \bE_{Z\sim \cN(0,1)} \tanh\left(s_{r,d,\lambda}(x) + \sqrt{s_{r,d,\lambda}(x)} Z\right), \\
    s_{r,d,\lambda}(x) &:= d\lambda^2 \cdot \frac 12 \left((1+x)^{r-1} - (1-x)^{r-1}\right).
  \end{align}
\end{proposition}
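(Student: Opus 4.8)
The plan is to track the root's log-likelihood ratio rather than $C_{\chi^2}$ directly, following the large-degree Gaussian-approximation strategy of Sly~\cite{sly2011reconstruction,mossel2022exact}. Fix a BMS channel $P$, set $x=C_{\chi^2}(P)$, and recall that for any BMS channel $Q$ with log-likelihood ratio $\ell_Q$ and magnetization $m_Q=\tanh(\ell_Q/2)$ one has $C_{\chi^2}(Q)=\bE[m_Q\mid\text{input}=+]$ and $\bE[m_Q^2\mid\text{input}=\pm]=C_{\chi^2}(Q)$. Applying the first identity to $Q=\BP(P)$ and using that log-likelihood ratios add over the conditionally independent observations through the downward hyperedges, $C_{\chi^2}(\BP(P))=\bE[\tanh H\mid\sigma_\rho=+]$ with $H=\sum_{j=1}^{t}H_j$, where $t\sim D$ is the number of downward hyperedges of $\rho$ and $H_j$ is the half-LLR the root gains from hyperedge $j$ and its $r-1$ subtrees. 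Writing $w=\lambda/(1-\lambda)$ and letting $m_1,\dots,m_{r-1}$ be the magnetizations of the $r-1$ child subtrees of hyperedge $j$ (each a copy of $P$ fed the corresponding child label), a direct computation with the kernel $B_{r,\lambda}$ of~\eqref{eqn:boht-B-special} gives
\[
  H_j \;=\; \frac12\log\frac{1+w\prod_{i=1}^{r-1}(1+m_i)}{1+w\prod_{i=1}^{r-1}(1-m_i)}.
\]
Since $(r-1)d\lambda^2\le1$ forces $\lambda\le1/\sqrt{(r-1)d}$, so $w\to0$ as $d\to\infty$, and since $\prod_i(1\pm m_i)\in[0,2^{r-1}]$, we get the deterministic bound $|H_j|\le2^{r-2}w\le2^{r-1}\lambda$ and hence $\bE[|H_j|^3]\le C_r\lambda^3$. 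Conditioned on $\sigma_\rho=+$ the hyperedges are independent, and within one hyperedge $B_{r,\lambda}$ makes, with probability $\lambda$, all $\sigma_{v_i}$ equal $+$ (so the $m_i$ are i.i.d.\ with mean $x$) and, with probability $1-\lambda$, the $\sigma_{v_i}$ i.i.d.\ uniform (so the $m_i$ are i.i.d., symmetric, mean $0$); in every case $\bE[m_i^2]=x$.

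Next I would match the first two conditional moments of $H_j$ to those of a Gaussian. Expanding $\log(1+wa)=wa+O(w^2)$ uniformly for $|a|\le2^{r-1}$ gives $H_j=w\,O_j+O_r(w^2)$ where $O_j:=\tfrac12\bigl(\prod_i(1+m_i)-\prod_i(1-m_i)\bigr)=\sum_{|I|\text{ odd}}\prod_{i\in I}m_i$. In $O_j$ the noise ($1-\lambda$) contributions to $\prod_i(1\pm m_i)$ cancel, so $\bE[O_j\mid+]=\tfrac{\lambda}{2}\bigl((1+x)^{r-1}-(1-x)^{r-1}\bigr)$ and therefore $d\,\bE[H_j\mid+]=s_{r,d,\lambda}(x)+O_r(d\lambda^3)=s_{r,d,\lambda}(x)+o(1)$. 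For the second moment, as $\lambda\to0$ the $m_i$ become i.i.d.\ symmetric, so in $\bE[O_j^2\mid+]$ only diagonal pairings of the index sets survive and $\bE[O_j^2\mid+]\to\sum_{k\text{ odd}}\binom{r-1}{k}x^{k}=\tfrac12\bigl((1+x)^{r-1}-(1-x)^{r-1}\bigr)$; together with $H_j=w\,O_j+O_r(w^2)$ and $w=\lambda+O(\lambda^2)$ this gives $d\,\Var(H_j\mid+)=s_{r,d,\lambda}(x)+o(1)$ as well. Because $d\lambda^2\le\tfrac1{r-1}$ and $s_{r,d,\lambda}(x)\le\tfrac{2^{r-2}}{r-1}$, all the $o(1)$ errors here are bounded by $C_r/\sqrt{d}$, uniformly in $\lambda$ and in $P$.

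Finally, since $\tanh$ is bounded with bounded derivatives, a Lindeberg swap that replaces each $H_j$ (conditioned on $\sigma_\rho=+$ and on $t$) by an independent $\cN\bigl(\bE[H_j\mid+],\Var(H_j\mid+)\bigr)$ perturbs $\bE[\tanh(\sum_{j\le t}H_j)]$ by at most $C_r\,t\,\lambda^3$, so after averaging over $t\sim D$ (using only $\bE[t]=d$) the total error is $C_r d\lambda^3=o(1)$. The Gaussianized sum is $\cN\bigl(t\,\bE[H_j\mid+],\,t\,\Var(H_j\mid+)\bigr)$; since $\Var(t)=O(d)$ while $\bE[H_j\mid+]$ and $\Var(H_j\mid+)$ are $O(\lambda^2)$, one gets $t\,\bE[H_j\mid+]=s_{r,d,\lambda}(x)+o(1)$ and $t\,\Var(H_j\mid+)=s_{r,d,\lambda}(x)+o(1)$ in $L^1$. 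Coupling two Gaussians through a single standard normal shows $|\bE[\tanh\cN(\mu_1,\nu_1)]-\bE[\tanh\cN(\mu_2,\nu_2)]|\le|\mu_1-\mu_2|+\sqrt{|\nu_1-\nu_2|}$, so altogether $\bE[\tanh H\mid\sigma_\rho=+]=\bE_{Z\sim\cN(0,1)}\bigl[\tanh\bigl(s_{r,d,\lambda}(x)+\sqrt{s_{r,d,\lambda}(x)}\,Z\bigr)\bigr]+o(1)=g_{r,d,\lambda}(x)+o(1)$, with the $o(1)$ uniform over BMS channels $P$ and over $\lambda$ with $(r-1)d\lambda^2\le1$. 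Taking $d_0=d_0(r,\epsilon)$ large makes this at most $\epsilon$. The deterministic case ($D$ a point mass at an integer $d$) is identical, with the trivial simplification $\Var(t)=0$.

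The step I expect to be the main obstacle is the moment matching: one must check that $H_j$, whose typical size $\Theta(\lambda)$ is an order of magnitude larger than its mean $\Theta(\lambda^2)$, has conditional mean and conditional variance that, after multiplying by $d$, converge to the \emph{same} quantity $s_{r,d,\lambda}(x)$. This coincidence is the finite-$d$ shadow of the channel-symmetry identity $\bE[e^{-2H}\mid\sigma_\rho=+]=1$ (which forces a symmetric Gaussian, in the half-LLR parametrization, to have variance equal to its mean), and extracting it cleanly uses both the BMS identity $\bE[m^2\mid\text{input}=\pm]=C_{\chi^2}(P)$ and the binomial identity $\sum_{k\text{ odd}}\binom{r-1}{k}x^k=\tfrac12\bigl((1+x)^{r-1}-(1-x)^{r-1}\bigr)$. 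The remaining ingredients — the closed form for $H_j$, the Lindeberg replacement, and the continuity of $\cN(\mu,\nu)\mapsto\bE\tanh$ — are routine, the only care being to keep all estimates uniform over $P$ and over the admissible range of $\lambda$.
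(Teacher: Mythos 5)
Your proposal follows essentially the same route as the paper's proof: write $C_{\chi^2}(\BP(P))$ as $\bE[\tanh(\sum_{j\le t}H_j)\mid\sigma_\rho=+]$ with conditionally i.i.d.\ per-hyperedge half-LLRs (your $H_j$ is exactly the paper's $\arctanh\wt\theta_i$), match the conditional mean and variance of $H_j$ to $s_{r,d,\lambda}(x)/d$ up to $O_r(\lambda^3)$, and finish with a Lindeberg-type normal approximation. The only real differences are cosmetic: the paper cites a ready-made normal-approximation lemma and handles the Poisson offspring by truncating to $|t-d|\le d^{0.6}$ plus continuity of $g$, whereas you do a hand-rolled Lindeberg swap, a Gaussian-coupling continuity bound, and $L^1$ control via $\Var(t)=O(d)$; and the paper extracts the mean$=$variance coincidence from the identities $\bE\arctanh\wt\theta_i=\bE[\theta_i\arctanh\theta_i]\approx\bE\theta_i^2\approx\bE(\arctanh\theta_i)^2$ (Lemma~\ref{lem:proof-large-deg-step:exp-and-var}), while you get it from the signal/noise mixture decomposition of $B_{r,\lambda}$ and the binomial identity. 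Both routes are sound.

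One step of yours needs tightening. From $H_j=wO_j+O_r(w^2)$ together with $\bE[O_j\mid+]=\tfrac\lambda2\bigl((1+x)^{r-1}-(1-x)^{r-1}\bigr)$ you cannot conclude $d\,\bE[H_j\mid+]=s_{r,d,\lambda}(x)+O_r(d\lambda^3)$: the unqualified remainder $O_r(w^2)=O_r(\lambda^2)$ is of the same order as the mean itself, and $d\cdot O_r(\lambda^2)=O_r(1)$, not $o(1)$. The repair is the cancellation you already invoke, applied to $H_j$ rather than only to its first-order term: under the noise component of $B_{r,\lambda}$ (probability $1-\lambda$) the $m_i$ are i.i.d.\ symmetric, so exchanging $a=\prod_i(1+m_i)$ with $b=\prod_i(1-m_i)$ gives $\bE[H_j\mid\text{noise}]=0$ exactly; hence $\bE[H_j\mid+]=\lambda\,\bE[H_j\mid\text{signal}]=\lambda\bigl(\tfrac w2\bigl((1+x)^{r-1}-(1-x)^{r-1}\bigr)+O_r(w^2)\bigr)$, and only now is the total error $O_r(\lambda^3)$, so that $d\cdot O_r(\lambda^3)=O_r(1/\sqrt d)$. (The variance estimate is fine as written, since there the leading term already carries the factor $w^2$ and the remainder is $O_r(w^3)$.) With this bookkeeping fix your argument is complete and uniform in $P$ and in the admissible $\lambda$, matching the paper's proof.
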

The rest of this section is devoted to the proof of Prop.~\ref{prop:large-deg-step}.

We first describe $\BP(P)$ in terms of the $\theta$-component.
Let $P$ be a BMS channel and $P_\theta$ be the $\theta$-component of $P$.
Let $t$ be the offspring ($t=d$ for regular hypertrees, $t\sim \Pois(d)$ for Poisson hypertrees).
Let $(\theta_{ij})_{i\in [t], j\in [r-1]}$ generated $\iidsim P_\theta$, where $\theta_{ij}$ is the $\theta$-component of the $j$-th vertex in the $i$-th downward hyperedge.
Let $\theta_i$ be the $\theta$-component of $i$-th hyperedge $P^{\times (r-1)}\circ B$.
As discussed in Section~\ref{sec:non-recon-special}, given $(\theta_{ij})_{j\in [r-1]}$, $\theta_i$ is equal to (the absolute value of)
\begin{align}
  \frac{\lambda \left( \prod_{j\in [r-1]} \left(1+\theta_{ij} x_{ij}\right) - \prod_{j\in [r-1]} \left(1-\theta_{ij} x_{ij}\right)\right)}{\lambda \left(\prod_{j\in [r-1]} \left(1+\theta_{ij} x_{ij}\right) +\prod_{j\in [r-1]} \left(1-\theta_{ij} x_{ij}\right)\right) + 2(1-\lambda)}.
\end{align}
with probability
\begin{align}
  \lambda \left(\prod_{j\in [r-1]} \left(\frac 12+\frac 12\theta_{ij} x_{ij}\right) +\prod_{j\in [r-1]} \left(\frac 12-\frac 12\theta_{ij} x_{ij}\right)\right) + 2^{2-r}(1-\lambda)
\end{align}
for $(x_{ij})_{j\in [r-1]} \in \{\pm\}^{r-1}$, $x_{i1}=+$.

Let $\ol\theta$ be the $\theta$-component of the full channel $\BP(P)$.
Let $P_{\ol\theta}$ denote the distribution of $\ol\theta$.
Then given $(\theta_i)_{i\in [t]}$,
$\ol\theta$ is equal to (the absolute value of)
\begin{align}
  \frac{\prod_{i\in [t]}(1+\theta_i x_i) - \prod_{i\in [t]}(1-\theta_i x_i)}{\prod_{i\in [t]}(1+\theta_i x_i) + \prod_{i\in [t]}(1-\theta_i x_i)}
\end{align}
with probability
\begin{align}
  \prod_{i\in [t]}\left(\frac 12+\frac 12\theta_i x_i\right) + \prod_{i\in [t]}\left(\frac 12-\frac 12\theta_i x_i\right)
\end{align}
for $(x_1,\ldots,x_t)\in \{\pm\}^t$, $x_1=+$.
In other words,
\begin{align}
  &~P_{\ol\theta | \theta_1,\ldots,\theta_i} \\
  \nonumber =&~ \sum_{(x_1,\ldots,x_t)\in \{\pm\}^t} \left(\prod_{i\in [t]} \left(\frac 12 +\frac 12 \theta_i x_i\right)\right)
  \mathbbm{1}\left\{\left|\frac{\prod_{i\in [t]}(1+\theta_i x_i) - \prod_{i\in [t]}(1-\theta_i x_i)}{\prod_{i\in [t]}(1+\theta_i x_i) + \prod_{i\in [t]}(1-\theta_i x_i)}\right|\right\} \\
  \nonumber =&~ \sum_{(x_1,\ldots,x_t)\in \{\pm\}^t} \left(\prod_{i\in [t]} \left(\frac 12 +\frac 12 \theta_i x_i\right)\right)
  \mathbbm{1}\left\{\left| \tanh\left(\sum_{i\in [t]} \arctanh(\theta_i x_i) \right) \right|\right\}.
\end{align}
Write $\wt \theta_i=\theta_i x_i$. Then $\bP[\wt \theta_i = s \theta_i | \theta_i] = \frac 12 + \frac 12 \theta_i s$ for $s\in \{\pm\}$.
So $\wt \theta_i$ for $i\in [t]$ are iid generated from the same distribution.
Let us call this distribution $D$.
Then
\begin{align}
  P_{\ol\theta} = \bE_t \bE_{\wt\theta_1,\ldots,\wt\theta_t\iidsim D} \mathbbm{1}\left\{\left| \tanh\left(\sum_{i\in [t]} \arctanh \wt\theta_i \right) \right|\right\}
\end{align}
This allows us to use central limit theorems to control the behavior of $\sum_{i\in [t]} \arctanh\wt\theta_i$.
\begin{lemma} \label{lem:proof-large-deg-step:per-hyperedge}
  There exists a constant $d_0=d_0(r)>0$ such that for any $d>d_0$, $\lambda\in[0,1]$ with $(r-1)d\lambda^2\le 1$, and any BMS channel $P$, we have
  \begin{align}
    & \left| C_{\chi^2}(P^{\times (r-1)} \circ B) - s_{r,\lambda}(C_{\chi^2}(P)) \right| \le O_r(\lambda^3),\\
    & s_{r,\lambda} (x) := \lambda^2 \cdot \frac 12 \left((1+x)^{r-1}-(1-x)^{r-1}\right).
  \end{align}
  where $O_r$ hides a multiplicative factor depending only on $r$.
\end{lemma}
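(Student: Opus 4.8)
The plan is a Taylor expansion in $\lambda$, exploiting that the hypothesis $(r-1)d\lambda^2\le 1$ together with $d>d_0$ forces $\lambda\le((r-1)d_0)^{-1/2}$, which is small (in particular bounded away from $1$) once $d_0=d_0(r)$ is chosen large enough; this smallness is what all the error terms will be measured against.

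\textbf{Reduction to an explicit mixture.} First I would reduce to the case where $P$ is a product of BSCs. By the BSC mixture representation (Lemma~\ref{lem:bms-mixture}), writing $\Delta_1,\dots,\Delta_{r-1}\iidsim P_\Delta$ and $\theta_i:=1-2\Delta_i$, we have $P^{\times(r-1)}\circ B_{r,\lambda}=\bE_{\theta_1,\dots,\theta_{r-1}}\big[(\BSC_{\Delta_1}\times\cdots\times\BSC_{\Delta_{r-1}})\circ B_{r,\lambda}\big]$, and since $C_{\chi^2}$ of a mixture of BMS channels is the corresponding average of $C_{\chi^2}$, it suffices to analyze $C_{\chi^2}$ of $(\BSC_{\Delta_1}\times\cdots\times\BSC_{\Delta_{r-1}})\circ B_{r,\lambda}$ and then average over $\theta_1,\dots,\theta_{r-1}$. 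By the BP recursion formula derived in Section~\ref{sec:non-recon-special}, this channel is a mixture of BSCs indexed by $x\in\{\pm\}^{r-1}$ with $x_1=+$; setting $A_x:=\prod_{i\in[r-1]}(1+\theta_i x_i)$ and $B_x:=\prod_{i\in[r-1]}(1-\theta_i x_i)$, branch $x$ has weight $w_x=\lambda\big(\prod_i(\tfrac12+\tfrac12\theta_i x_i)+\prod_i(\tfrac12-\tfrac12\theta_i x_i)\big)+2^{2-r}(1-\lambda)$ and $\theta$-parameter $\Theta_x=\big|\lambda(A_x-B_x)\big/\big(\lambda(A_x+B_x)+2(1-\lambda)\big)\big|$, so that $C_{\chi^2}\big((\BSC_{\Delta_1}\times\cdots\times\BSC_{\Delta_{r-1}})\circ B_{r,\lambda}\big)=\sum_{x:\,x_1=+}w_x\Theta_x^2$.

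\textbf{Expansion and tensorization.} Since $\theta_i\in[0,1]$ we have $0\le A_x,B_x\le 2^{r-1}$, and since $\lambda$ is bounded away from $1$ the denominator $\lambda(A_x+B_x)+2(1-\lambda)\ge 2(1-\lambda)$ is bounded below by a positive constant; hence, uniformly over the $\theta_i$ and over $x$, $\Theta_x=\tfrac12\lambda(A_x-B_x)+O_r(\lambda^2)$ and therefore $\Theta_x^2=\tfrac14\lambda^2(A_x-B_x)^2+O_r(\lambda^3)$. Because $w_x=2^{2-r}+O_r(\lambda)$ and $\Theta_x^2=O_r(\lambda^2)$, we get $w_x\Theta_x^2=2^{2-r}\cdot\tfrac14\lambda^2(A_x-B_x)^2+O_r(\lambda^3)$, and summing the $2^{r-2}$ branches preserves an $O_r(\lambda^3)$ error. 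The key identity is a tensorization computation: using $\sum_{x}A_x^2=\sum_x B_x^2=\prod_i\big((1+\theta_i)^2+(1-\theta_i)^2\big)=2^{r-1}\prod_i(1+\theta_i^2)$ and $\sum_x A_xB_x=\sum_x\prod_i(1-\theta_i^2 x_i^2)=2^{r-1}\prod_i(1-\theta_i^2)$, and the symmetry $x\mapsto-x$ which fixes $(A_x-B_x)^2$, we obtain $\sum_{x:\,x_1=+}(A_x-B_x)^2=\tfrac12\sum_{x\in\{\pm\}^{r-1}}(A_x-B_x)^2=2^{r-1}\big(\prod_i(1+\theta_i^2)-\prod_i(1-\theta_i^2)\big)$. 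Plugging in gives $\sum_{x:\,x_1=+}w_x\Theta_x^2=\tfrac{\lambda^2}{2}\big(\prod_i(1+\theta_i^2)-\prod_i(1-\theta_i^2)\big)+O_r(\lambda^3)$. Finally, averaging over the i.i.d.\ $\theta_i$ with $\bE[\theta_i^2]=C_{\chi^2}(P)=:x$ and using independence yields $\bE\prod_i(1\pm\theta_i^2)=(1\pm x)^{r-1}$, so $C_{\chi^2}(P^{\times(r-1)}\circ B_{r,\lambda})=\tfrac{\lambda^2}{2}\big((1+x)^{r-1}-(1-x)^{r-1}\big)+O_r(\lambda^3)=s_{r,\lambda}(x)+O_r(\lambda^3)$.

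\textbf{Main obstacle.} The delicate point is making the $O_r(\lambda^3)$ \emph{uniform} over all BMS channels $P$, and this is precisely what $d>d_0$ buys: it keeps $\lambda$ away from $1$ so the denominator of $\Theta_x$ cannot degenerate, while the bounds $A_x,B_x\le 2^{r-1}$ (from $\theta_i\in[0,1]$) and the number of branches depend only on $r$, so all error contributions collapse into a single $O_r(\lambda^3)$. The central-limit / Gaussian-approximation step that aggregates the $d$ (or $\mathrm{Pois}(d)$) hyperedges into the full $\BP(P)$ is handled by the remaining lemmas of this section and is not needed here.
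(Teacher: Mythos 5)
Your proposal is correct and follows essentially the same route as the paper: expand via the BSC mixture representation and the explicit BP recursion formula, approximate the branch weights and $\theta$-parameters to leading order in $\lambda$ (valid uniformly since $(r-1)d\lambda^2\le 1$ with $d>d_0(r)$ forces $\lambda$ small), compute $\sum_{x:\,x_1=+}(A_x-B_x)^2=2^{r-1}\bigl(\prod_i(1+\theta_i^2)-\prod_i(1-\theta_i^2)\bigr)$ by tensorization, and average over the i.i.d.\ $\theta$-components using $\bE\theta_i^2=C_{\chi^2}(P)$. The only cosmetic difference is that you expand $w_x$ and $\Theta_x$ separately while the paper first cancels one factor of the denominator in $w_x\Theta_x^2$ and expands the resulting single fraction; the estimates are identical.
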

\begin{proof}
  We have
  \begin{align*}
    &~ C_{\chi^2}(P^{\times (r-1)} \circ B)\\
    =&~ \bE \theta_i^2\\
    =&~ \underset{\theta_{i1},\ldots,\theta_{i,r-1}\iidsim P_\theta}{\bE} \sum_{\substack{(x_{ij})_{j\in [r-1]}\in \{\pm\}^{r-1}\\ x_{i1}=+}} \\
    &~\qquad \left(2^{1-r} \cdot \frac{\lambda^2 \left( \prod_{j\in [r-1]} \left(1+\theta_{ij} x_{ij}\right) - \prod_{j\in [r-1]} \left(1-\theta_{ij} x_{ij}\right)\right)^2}{\lambda \left(\prod_{j\in [r-1]} \left(1+\theta_{ij} x_{ij}\right) +\prod_{j\in [r-1]} \left(1-\theta_{ij} x_{ij}\right)\right)+ 2(1-\lambda)}\right)\\
    =&~ \underset{\theta_{i1},\ldots,\theta_{i,r-1}\iidsim P_\theta}{\bE} \sum_{\substack{(x_{ij})_{j\in [r-1]}\in \{\pm\}^{r-1}\\ x_{i1}=+}} \\
    &~\qquad \left(2^{-r} \lambda^2 \left( \prod_{j\in [r-1]} \left(1+\theta_{ij} x_{ij}\right) - \prod_{j\in [r-1]} \left(1-\theta_{ij} x_{ij}\right)\right)^2\right) + O_r(\lambda^3).
  \end{align*}
  The inner summation satisfies
  \begin{align*}
    &~ \sum_{\substack{(x_{ij})_{j\in [r-1]}\in \{\pm\}^{r-1}\\ x_{i1}=+}}
    \left( \prod_{j\in [r-1]} \left(1+\theta_{ij} x_{ij}\right) - \prod_{j\in [r-1]} \left(1-\theta_{ij} x_{ij}\right)\right)^2 \\
    =&~ \frac 12 \sum_{\substack{(x_{ij})_{j\in [r-1]}\in \{\pm\}^{r-1}}}
    \left( \prod_{j\in [r-1]} \left(1+\theta_{ij} x_{ij}\right) - \prod_{j\in [r-1]} \left(1-\theta_{ij} x_{ij}\right)\right)^2 \\
    =&~ \frac 12 \sum_{\substack{(x_{ij})_{j\in [r-1]}\in \{\pm\}^{r-1}}}
    \left( \prod_{j\in [r-1]} \left(1+2 \theta_{ij} x_{ij}+\theta_{ij}^2\right)
    - 2 \prod_{j\in [r-1]} \left(1-\theta_{ij}^2\right) \right. \\
    & \qquad \left.+ \prod_{j\in [r-1]} \left(1-2 \theta_{ij} x_{ij}+\theta_{ij}^2\right)\right)\\
    =&~ 2^{r-1} \left(\prod_{j\in [r-1]} (1+\theta_{ij}^2)-\prod_{j\in [r-1]} (1-\theta_{ij}^2)\right).
  \end{align*}
  Therefore
  \begin{align*}
    &~ \underset{\theta_{i1},\ldots,\theta_{i,r-1}\iidsim P_\theta}{\bE} \sum_{\substack{(x_{ij})_{j\in [r-1]}\in \{\pm\}^{r-1}\\ x_{i1}=+}} \left( \prod_{j\in [r-1]} \left(1+\theta_{ij} x_{ij}\right) - \prod_{j\in [r-1]} \left(1-\theta_{ij} x_{ij}\right)\right)^2\\
    =&~ 2^{r-1} \underset{\theta_{i1},\ldots,\theta_{i,r-1}\iidsim P_\theta}{\bE} \left(\prod_{j\in [r-1]} (1+\theta_{ij}^2)-\prod_{j\in [r-1]} (1-\theta_{ij}^2)\right) \\
    =&~ 2^{r-1} \left(\left(1+C_{\chi^2}(P)\right)^{r-1}-\left(1-C_{\chi^2}(P)\right)^{r-1}\right).
  \end{align*}
  Combining everything we finish the proof.
\end{proof}

\begin{lemma} \label{lem:proof-large-deg-step:exp-and-var}
  There exists a constant $d_0=d_0(r)>0$ such that for any $d>d_0$, $\lambda\in[0,1]$ with $(r-1)d\lambda^2\le 1$, and any BMS channel $P$, we have
  \begin{align}
    \left|\bE \arctanh \wt \theta_i - s_{r,\lambda}(C_{\chi^2}(P))\right| &= O_r(\lambda^3),\\
    \left|\Var(\arctanh \wt \theta_i) - s_{r,\lambda}(C_{\chi^2}(P))\right| &= O_r(\lambda^3).
  \end{align}
\end{lemma}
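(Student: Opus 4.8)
The idea is that both claimed quantities are, up to cubic-in-$\lambda$ errors, equal to the per-hyperedge $\chi^2$-capacity $C_{\chi^2}(P^{\times(r-1)}\circ B_{r,\lambda})$, which is already pinned down to within $O_r(\lambda^3)$ of $s_{r,\lambda}(C_{\chi^2}(P))$ by Lemma~\ref{lem:proof-large-deg-step:per-hyperedge}. So the plan is: (1) compute the two conditional moments of $\wt\theta_i$ given $\theta_i$ exactly; (2) establish a crude a priori bound $|\theta_i|=O_r(\lambda)$; (3) Taylor-expand $\arctanh$ and collect the remainder into $O_r(\lambda^3)$.

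First I would record, using $\bP[x_i=s\mid\theta_i]=\tfrac12+\tfrac12 s\theta_i$ and $x_i^2=1$, that
\begin{align}
  \bE[\wt\theta_i\mid\theta_i]=\theta_i\,\bE[x_i\mid\theta_i]=\theta_i^2,\qquad \bE[\wt\theta_i^2\mid\theta_i]=\theta_i^2 .
\end{align}
Hence $\bE\wt\theta_i=\bE\wt\theta_i^2=\bE\theta_i^2=C_{\chi^2}(P^{\times(r-1)}\circ B_{r,\lambda})$, which by Lemma~\ref{lem:proof-large-deg-step:per-hyperedge} equals $s_{r,\lambda}(C_{\chi^2}(P))+O_r(\lambda^3)$; in particular it is $O_r(\lambda^2)$, since $s_{r,\lambda}(x)\le 2^{r-2}\lambda^2$ on $x\in[0,1]$.

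Next I would prove $|\theta_i|\le 2^{r-1}\lambda$ for $\lambda\le\tfrac12$, directly from the explicit formula for the $\theta$-component of $(\BSC_{\Delta_{i1}}\times\cdots\times\BSC_{\Delta_{i,r-1}})\circ B_{r,\lambda}$ recalled in Section~\ref{sec:non-recon-special}: its numerator is $\lambda$ times a quantity of absolute value at most $2^{r-1}$ (each factor $1\pm\theta_{ij}x_{ij}$ lies in $[0,2]$) and its denominator is at least $2(1-\lambda)\ge 1$. Since $(r-1)d\lambda^2\le1$ forces $\lambda\le((r-1)d)^{-1/2}$, for $d\ge d_0(r)$ large enough we have $2^{r-1}\lambda\le\tfrac12$, so $|\wt\theta_i|=|\theta_i|$ stays in $[0,\tfrac12]$, where $\arctanh u=u+\rho(u)$ with $|\rho(u)|\le|u|^3$. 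Combined with the pointwise bound this yields $\bE|\wt\theta_i|^k\le(2^{r-1}\lambda)^{k-2}\,\bE\theta_i^2=O_r(\lambda^k)$ for $k=3,4$.

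Finally I would assemble the pieces. For the mean, $\bE\arctanh\wt\theta_i=\bE\wt\theta_i+\bE\rho(\wt\theta_i)$ with $|\bE\rho(\wt\theta_i)|\le\bE|\wt\theta_i|^3=O_r(\lambda^3)$, giving $\bE\arctanh\wt\theta_i=s_{r,\lambda}(C_{\chi^2}(P))+O_r(\lambda^3)$. For the variance, expand $(\arctanh\wt\theta_i)^2=\wt\theta_i^2+2\wt\theta_i\rho(\wt\theta_i)+\rho(\wt\theta_i)^2$; the last two terms are $O_r(\bE|\wt\theta_i|^4)=O_r(\lambda^4)$ in expectation, and $(\bE\arctanh\wt\theta_i)^2=(O_r(\lambda^2))^2=O_r(\lambda^4)$, so $\Var(\arctanh\wt\theta_i)=\bE\wt\theta_i^2+O_r(\lambda^4)=s_{r,\lambda}(C_{\chi^2}(P))+O_r(\lambda^3)$. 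This lemma is essentially bookkeeping on top of Lemma~\ref{lem:proof-large-deg-step:per-hyperedge}; the only delicate point, and the one I would be most careful about, is that the identity $\bE[\wt\theta_i\mid\theta_i]=\theta_i^2$ must be used exactly rather than only to leading order — otherwise the linear term of $\arctanh$ would match the per-hyperedge $\chi^2$-capacity merely up to $O_r(\lambda^2)$, which is too weak for Proposition~\ref{prop:large-deg-step}.
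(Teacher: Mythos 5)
Your proposal is correct and follows essentially the same route as the paper: reduce both moments of $\arctanh\wt\theta_i$ to $\bE\theta_i^2=C_{\chi^2}(P^{\times(r-1)}\circ B_{r,\lambda})$ via the conditional law of $\wt\theta_i$ given $\theta_i$, use the pointwise bound $|\theta_i|=O_r(\lambda)$ to Taylor-expand $\arctanh$ with an $O_r(\lambda^3)$ remainder, invoke Lemma~\ref{lem:proof-large-deg-step:per-hyperedge}, and note $(\bE\arctanh\wt\theta_i)^2=O_r(\lambda^4)$ for the variance. You merely make the a priori bound $|\theta_i|\le 2^{r-1}\lambda$ and the remainder estimates slightly more explicit than the paper does.
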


\begin{proof}
  Note that $\theta_i = O_r(\lambda)$ almost surely.
  When $d$ is large enough, $\lambda$ is small enough, and $\arctanh \theta_i = \theta_i + O_r(\lambda^3)$ almost surely by Taylor expansion. Then
  \begin{align}
    \bE \arctanh \wt \theta_i = \bE [\theta_i \arctanh \theta_i] = \bE \theta_i^2 + O_r(\lambda^4),\\
    \bE (\arctanh \wt \theta_i)^2 = \bE (\arctanh \theta_i)^2 = \bE \theta_i^2 + O_r(\lambda^4).
  \end{align}
  By Lemma~\ref{lem:proof-large-deg-step:per-hyperedge}, we have
  \begin{align}
    &\bE \theta_i^2 = s_{r,\lambda}(C_{\chi^2}(P)) + O_r(\lambda^3).
  \end{align}
  This already implies the statement on $\bE \arctanh \wt \theta_i$.
  For the statement on $\Var(\arctanh \wt \theta_i)$, we note that
  \begin{align}
    \bE \arctanh \wt \theta_i = s_{r,\lambda}(C_{\chi^2}(P)) + O_r(\lambda^3) = O_r(\lambda^2).
  \end{align}
  So
  \begin{align}
    \Var(\arctanh \wt \theta_i) &= \bE (\arctanh \wt \theta_i)^2 - \left(\bE \arctanh \wt \theta_i\right)^2\\
    \nonumber &= s_{r,\lambda}(C_{\chi^2}(P)) + O_r(\lambda^3).
  \end{align}
  This finishes the proof.
\end{proof}

Now we recall a normal approximation result from \cite[Prop.~5.3]{mossel2022exact}. We only need the scalar version of it.
\begin{lemma}[\cite{mossel2022exact}] \label{lem:normal-approx-lindeberg}
  Let $\phi: \bR \to \bR$ be a thrice differentiable and bounded function with bounded derivatives up to third order. Let $V_1,\ldots, V_t\in \bR$ be independent random real numbers.
  Suppose there exists deterministic numbers $\mu,\sigma\in \bR$ such that the following holds:
  for some constant $C>0$, almost surely
  \begin{align}
    &\max\left\{\left| \sum_{j\in [t]} \bE V_j -\mu \right|, \left|\sum_{j\in [t]} \Var(V_j) - \sigma^2\right| \right\}\le C t^{-1/2}, \\
    &\max\left\{ |\mu|, |\sigma^2| \right\} \le C, \qquad \max_{j\in [t]} |V_j| \le C t^{-1/2}.
  \end{align}
  Then for any $\epsilon>0$, there exists $t_0 = t_0(\epsilon,\phi,C)$ such that if $t>t_0$, then
  \begin{align}
    \left| \bE \phi \left( \sum_{j\in [t]} V_j \right) - \bE_{W\sim \cN(\mu,\sigma^2)} \phi(W) \right| \le \epsilon.
  \end{align}
\end{lemma}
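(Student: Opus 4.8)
The plan is to prove Lemma~\ref{lem:normal-approx-lindeberg} by the classical Lindeberg replacement (telescoping) argument: replace the independent summands $V_1,\dots,V_t$ one at a time by matched Gaussians, and control each swap through a third-order Taylor expansion of $\phi$. First I would introduce independent random variables $G_1,\dots,G_t$, also independent of $(V_j)_j$, with $G_j\sim\cN(\bE V_j,\Var(V_j))$; then $\sum_j G_j\sim\cN(m_t,s_t^2)$ where $m_t=\sum_j\bE V_j$ and $s_t^2=\sum_j\Var(V_j)$, and by hypothesis $|m_t-\mu|\le Ct^{-1/2}$ and $|s_t^2-\sigma^2|\le Ct^{-1/2}$. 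I would split the target into the Lindeberg swap $A:=\bigl|\bE\phi(\sum_j V_j)-\bE\phi(\sum_j G_j)\bigr|$ and the Gaussian comparison $B:=\bigl|\bE\phi(\sum_j G_j)-\bE_{W\sim\cN(\mu,\sigma^2)}\phi(W)\bigr|$, and show that each is at most $\epsilon/2$ once $t$ is large.

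For $A$, set $R_k:=\sum_{j<k}V_j+\sum_{j>k}G_j$, which is independent of the pair $(V_k,G_k)$, so that $\bE\phi(\sum_j V_j)-\bE\phi(\sum_j G_j)=\sum_{k=1}^t\bigl(\bE\phi(R_k+V_k)-\bE\phi(R_k+G_k)\bigr)$. Taylor expanding $\phi(R_k+x)$ to second order with cubic remainder and integrating over $x$, the $\phi(R_k)$, $\phi'(R_k)$ and $\phi''(R_k)$ terms cancel because $\bE G_k=\bE V_k$ and $\bE G_k^2=\Var(V_k)+(\bE V_k)^2=\bE V_k^2$; hence the $k$-th term is at most $\tfrac16\|\phi'''\|_\infty(\bE|V_k|^3+\bE|G_k|^3)$. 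The a.s.\ bound $|V_k|\le Ct^{-1/2}$ gives $\bE V_k^2\le C^2t^{-1}$, whence $\max_k\Var(V_k)\le C^2t^{-1}$ and, since $\sum_k\Var(V_k)\le\sigma^2+Ct^{-1/2}\le 2C$, also $\sum_k\bE V_k^2\le 2C+C^2$. Thus $\sum_k\bE|V_k|^3\le Ct^{-1/2}\sum_k\bE V_k^2=O(t^{-1/2})$, and from $\bE|G_k|^3\le c\bigl(|\bE V_k|^3+\Var(V_k)^{3/2}\bigr)$ together with $\sum_k(\bE V_k)^2\le\sum_k\bE V_k^2=O(1)$ and $\max_k\Var(V_k)^{1/2}\le Ct^{-1/2}$ one gets $\sum_k\bE|G_k|^3=O(t^{-1/2})$. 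Therefore $A=O\bigl(\|\phi'''\|_\infty\,\poly(C)\,t^{-1/2}\bigr)\le\epsilon/2$ for $t$ large.

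For $B$, write both Gaussian expectations with a single $Z\sim\cN(0,1)$ as $\bE\phi(m_t+s_tZ)$ and $\bE\phi(\mu+\sigma Z)$, and use $|\phi(m_t+s_tZ)-\phi(\mu+\sigma Z)|\le\|\phi'\|_\infty(|m_t-\mu|+|s_t-\sigma|\,|Z|)$. Here $|m_t-\mu|\le Ct^{-1/2}$ and $|s_t-\sigma|\le\sqrt{|s_t^2-\sigma^2|}\le\sqrt{C}\,t^{-1/4}$, the square-root bound being what makes this work even in the degenerate case $\sigma=0$. Taking expectations, $B\le\|\phi'\|_\infty\bigl(Ct^{-1/2}+\sqrt{C}\,\bE|Z|\,t^{-1/4}\bigr)=O(t^{-1/4})\le\epsilon/2$ for $t$ large. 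Combining $A$ and $B$ proves the lemma, with $t_0$ depending only on $\epsilon$, $\|\phi'\|_\infty$, $\|\phi'''\|_\infty$, and $C$, exactly as stated.

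The step I expect to be the main obstacle is handling the mismatch between $\sum_j G_j\sim\cN(m_t,s_t^2)$ and the intended limit $\cN(\mu,\sigma^2)$ when $\sigma^2$ may be tiny or zero: this cannot be treated as a smooth perturbation of a fixed nondegenerate Gaussian, and the crude estimate $|s_t-\sigma|\le\sqrt{|s_t^2-\sigma^2|}$ is essentially forced, costing a factor in the rate (hence $t^{-1/4}$ rather than $t^{-1/2}$), though this is harmless. The other delicate point is the uniformity of $t_0$: it must not depend on the laws of the $V_j$, which forces every moment estimate to come solely from the a.s.\ bound $|V_j|\le Ct^{-1/2}$ and from $\sum_j\Var(V_j)\le\sigma^2+Ct^{-1/2}$, rather than from any Lyapunov-type ratio that could be unbounded.
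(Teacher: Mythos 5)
Your proof is correct. Note that the paper itself does not prove this lemma at all: it is imported verbatim as \cite{mossel2022exact}, Prop.~5.3 (scalar version), so there is no in-paper argument to compare against; your Lindeberg replacement scheme is the standard route to such statements and, as far as I can tell, matches in spirit how the cited reference handles it. The details check out: the telescoping with $R_k=\sum_{j<k}V_j+\sum_{j>k}G_j$ is exact, the first two moments of $V_k$ and $G_k$ match so the third-order Taylor remainder controls each swap, and all moment sums ($\sum_k\bE|V_k|^3$, $\sum_k\bE|G_k|^3$) are $O(t^{-1/2})$ using only the a.s.\ bound $|V_k|\le Ct^{-1/2}$ and $\sum_k\Var(V_k)\le \sigma^2+Ct^{-1/2}$, which is exactly what guarantees $t_0=t_0(\epsilon,\phi,C)$ uniformly over the laws of the $V_j$. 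Your handling of the Gaussian-to-Gaussian comparison via $|s_t-\sigma|\le\sqrt{|s_t^2-\sigma^2|}$ is the right way to cover the degenerate case $\sigma^2$ small or zero (which does occur in the paper's application, where $\lambda\to 0$); the resulting $t^{-1/4}$ rate is harmless since the lemma only asserts an $\epsilon$-approximation. Two cosmetic remarks: the boundedness of $\phi$ itself is never needed in your argument (bounded $\phi'$ and $\phi'''$ suffice), and the ``almost surely'' in the hypotheses is vacuous as stated here since the means and variances are deterministic --- it matters only in the conditional form in which the lemma is used in \cite{mossel2022exact}; neither point affects correctness.
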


We now have everything we need for the proof of Prop.~\ref{prop:large-deg-step}.
\begin{proof}[Proof of Prop.~\ref{prop:large-deg-step}]
  \textbf{Regular hypertree:}
  Define $\wt \theta$ as $\bP[\wt \theta = s \theta | \theta] = \frac 12 + \theta s$ for $s\in \{\pm\}$. Then
  \begin{align}
    C_{\chi^2}(\BP(P)) = \bE \wt \theta = \bE_{\wt\theta_1,\ldots,\wt\theta_t\iidsim D} \tanh\left(\sum_{i\in [t]} \arctanh \wt\theta_i \right) .
  \end{align}
  In fact, the equality holds with $\tanh$ replaced by $\tanh^2$. We use the $\tanh$ form here because it is slightly simpler.

  Now we apply Lemma~\ref{lem:normal-approx-lindeberg} with
  \begin{align}
    \phi(x) = \tanh x,\quad V_i = \arctanh \wt \theta_i, \quad \mu = \sigma^2 = d s_{r,\lambda}(C_{\chi^2}(P)) = s_{r,d,\lambda}(C_{\chi^2}(P)).
  \end{align}
  The conditions in Lemma~\ref{lem:normal-approx-lindeberg} are satisfied by Lemma~\ref{lem:proof-large-deg-step:exp-and-var} and because $\lambda = O(d^{-1/2})$.
  This finishes the proof.

  \textbf{Poisson hypertree:}
  Fix $\epsilon>0$. Let $t\sim \Pois(d)$.
  By Poisson tail bounds, we have $\bP[|t-d| > d^{0.6}] < \epsilon/3$ for large enough $d$ (depending only on $\epsilon$).
  We apply Lemma~\ref{lem:normal-approx-lindeberg} for every $t\in [d-d^{0.6},d+d^{0.6}]$, with $\mu = \sigma^2 = s_{r,t,\lambda} (C_{\chi^2}(P))$ and error tolerance $\epsilon/3$.
  Note that
  \begin{align}
    \left|s_{r,d,\lambda} (C_{\chi^2}(P)) - s_{r,t,\lambda} (C_{\chi^2}(P))\right| = O_r(d^{-0.4}).
  \end{align}
  So for $d$ large enough (depending only on $\epsilon,r$), we have
  \begin{align}
    \left|g_{r,d,\lambda} (C_{\chi^2}(P)) - g_{r,t,\lambda} (C_{\chi^2}(P))\right| \le \epsilon/3
  \end{align}
  by continuity of $g_r$ (Lemma~\ref{lem:large-deg-step-g-monotone}).

  Therefore we have
  \begin{align*}
    &~\left|C_{\chi^2}(\BP(P)) - g_{r,d,\lambda}(C_{\chi^2}(P))\right| \\
    =&~\left|\bE_{t\sim \Pois(d)} C_{\chi^2}((P^{\times (r-1)}\circ B)^{\star t}) - g_{r,d,\lambda}(C_{\chi^2}(P))\right| \\
    \le&~ \bE_{t\sim \Pois(d)} \mathbbm{1}\{|t-d|\le d^{0.6}\}\left|C_{\chi^2}((P^{\times (r-1)}\circ B)^{\star t}) - g_{r,t,\lambda}(C_{\chi^2}(P))\right|\\
    &~+\bE_{t\sim \Pois(d)} \mathbbm{1}\{|t-d|\le d^{0.6}\} \left|g_{r,t,\lambda}(C_{\chi^2}(P))-g_{r,d,\lambda}(C_{\chi^2}(P))\right|\\
    &~+\bE_{t\sim \Pois(d)} \mathbbm{1}\{|t-d|> d^{0.6}\} |C_{\chi^2}((P^{\times (r-1)}\circ B)^{\star t}) - g_{r,d,\lambda}(C_{\chi^2}(P))|\\
    \le&~ \epsilon/3 + \epsilon/3 + \epsilon/3 = \epsilon.
  \end{align*}
  Note that $C_{\chi^2}(P)\in [0,1]$ for any BMS channel $P$, and $g_{r,d,\lambda}(x)\in [0,1]$ for all $x\in [0,1]$.
\end{proof}


\subsection{Properties of functions} \label{sec:recon-large-deg:func-property}
Theorem~\ref{thm:boht-recon-large-d} then follows from analyzing properties of the function $g_{r,d,\lambda}$.
For $r\ge 2$, we define
\begin{align}
  g_r(x) &:= \bE_{Z\sim \cN(0,1)} \tanh\left(s_r(x) + \sqrt{s_r(x)} Z\right),  \label{eqn:large-deg-step-g} \\
  s_r(x) &:= \frac 1{2(r-1)} \left((1+x)^{r-1} - (1-x)^{r-1}\right).
\end{align}

\begin{lemma} \label{lem:large-deg-step-g-monotone}
  For any $r\ge 2$, the function $g_r$ is strictly increasing and continuous differentiable on $[0,1]$.
\end{lemma}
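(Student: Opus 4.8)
The plan is to write $g_r=h\circ s_r$ where $h(s):=\bE_{Z\sim\cN(0,1)}\tanh\!\big(s+\sqrt s\,Z\big)$ for $s\ge 0$, and treat the two factors separately. The outer factor $s_r(x)=\frac1{2(r-1)}\big((1+x)^{r-1}-(1-x)^{r-1}\big)$ is a polynomial, hence $C^\infty$ on $[0,1]$, with derivative $s_r'(x)=\tfrac12\big((1+x)^{r-2}+(1-x)^{r-2}\big)\ge\tfrac12>0$; thus $s_r$ is strictly increasing on $[0,1]$, $s_r(0)=0$, and $s_r([0,1])\subseteq[0,\infty)$. So the whole lemma reduces to showing that $h$ is $C^1$ and strictly increasing on $[0,\infty)$, after which the chain rule (using the right-hand derivative of $h$ at $0$, which is where $s_r$ sends $x=0$) gives $g_r\in C^1[0,1]$ with $g_r'(x)=h'(s_r(x))\,s_r'(x)>0$.

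First I would differentiate $h$ on $(0,\infty)$. For $s$ in a compact subinterval $[s_1,s_2]\subset(0,\infty)$ the integrand's $s$-derivative $\big(1+\tfrac{z}{2\sqrt s}\big)\operatorname{sech}^2(s+\sqrt s\,z)\,\phi(z)$ is dominated by $\big(1+\tfrac{|z|}{2\sqrt{s_1}}\big)\phi(z)\in L^1$, so differentiation under the integral is valid and, writing $Y_s:=s+\sqrt s\,Z$,
\begin{align}
  h'(s)=\bE\big[\operatorname{sech}^2(Y_s)\big]+\frac1{2\sqrt s}\,\bE\big[Z\operatorname{sech}^2(Y_s)\big].
\end{align}
The key manipulation --- the one that both removes the apparent singularity at $s=0$ and yields positivity --- is Gaussian integration by parts applied to the second term: since $\frac{d}{dz}\operatorname{sech}^2(s+\sqrt s\,z)=-2\sqrt s\,\operatorname{sech}^2(s+\sqrt s\,z)\tanh(s+\sqrt s\,z)$, Stein's identity gives $\frac1{2\sqrt s}\bE[Z\operatorname{sech}^2(Y_s)]=-\bE[\operatorname{sech}^2(Y_s)\tanh(Y_s)]$, whence
\begin{align}
  h'(s)=\bE\big[\operatorname{sech}^2(Y_s)\,(1-\tanh Y_s)\big]=\bE\big[(1-\tanh Y_s)^2\,(1+\tanh Y_s)\big]>0
\end{align}
for every $s>0$, because $1\mp\tanh Y_s$ are both a.s.\ strictly positive. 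Hence $h$ is strictly increasing on $(0,\infty)$.

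It remains to handle the endpoint $s=0$. Continuity of $h$ there is immediate from bounded convergence ($|\tanh|\le1$ and $\tanh(s+\sqrt s\,Z)\to0$ pointwise), so $h(0)=0$. Passing to the limit in the displayed formula for $h'$ --- again by bounded convergence, as $\operatorname{sech}^2(Y_s)(1-\tanh Y_s)\to1$ pointwise and stays in $(0,2)$ --- gives $\lim_{s\to0^+}h'(s)=1$. By the standard fact that a function continuous on $[0,\infty)$ and differentiable on $(0,\infty)$ whose derivative has a finite limit at $0$ is $C^1$ on $[0,\infty)$, we conclude $h\in C^1[0,\infty)$ with $h'(0)=1>0$; combined with $h'>0$ on $(0,\infty)$, $h$ is strictly increasing on all of $[0,\infty)$, and composing with $s_r$ finishes the proof. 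The only real subtlety --- the ``hard part'' such as it is --- is the non-differentiability of $s\mapsto\sqrt s$ at the origin, but this is dispatched cleanly by the Stein rewrite, which cancels the offending $1/\sqrt s$ factor; everything else is routine.
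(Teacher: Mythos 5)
Your proposal is correct, and it follows the same outer structure as the paper — writing $g_r=h\circ s_r$ with $h(s)=\bE_{Z\sim\cN(0,1)}\tanh(s+\sqrt s\,Z)$ and reducing everything to monotonicity and smoothness of $h$ on $[0,\infty)$ — but it differs in how that inner fact is established. The paper simply observes that the monotonicity/continuity of $h$ is equivalent, after a change of variables, to the $q=2$ case of Lemma 4.4 of Sly (2011), and cites it; you instead prove it directly, differentiating under the integral on compact subintervals of $(0,\infty)$ and then using Stein's identity to rewrite the derivative as $h'(s)=\bE\big[(1-\tanh Y_s)^2(1+\tanh Y_s)\big]>0$, with a bounded-convergence argument giving $h'(0^+)=1$ and hence $C^1$ regularity up to the endpoint. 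Your computations check out: the Gaussian integration by parts does cancel the $1/(2\sqrt s)$ singularity exactly as you claim, the domination bound justifies the interchange, and $s_r'(x)=\tfrac12\big((1+x)^{r-2}+(1-x)^{r-2}\big)>0$ handles the outer factor. What your route buys is a self-contained proof with an explicit positive formula for $h'$, and it actually covers the full statement of the lemma — strict increase and continuous differentiability — whereas the paper's one-line citation argues only continuity and monotonicity (which is all that is used downstream in Prop.~\ref{prop:large-deg-step} and Theorem~\ref{thm:boht-recon-large-d}); what the citation buys is brevity and alignment with the prior literature the large-degree analysis is modeled on.
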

\begin{proof}
  Note that $s_r(x)$ is continuous and increasing on $[0,1]$.
  Therefore it suffices to prove that
  \begin{align}
    g(s) := \bE_{Z\sim \cN(0,1)} \tanh \left(s + \sqrt s Z \right)
  \end{align}
  is continuous and increasing on $\bR_{\ge 0}$.
  This statement is in fact equivalent to the $q=2$ case in \cite[Lemma 4.4]{sly2011reconstruction}, after a suitable change of variables.
\end{proof}



\begin{lemma} \label{lem:large-deg-step-g-r7}
  For $r\ge 7$, there exists $x\in (0,1)$ such that $g_r(x)>x$.
\end{lemma}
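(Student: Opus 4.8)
The plan is to reduce the existential statement to a single explicit numerical inequality at $r=7$. Recall (from the proof of Lemma~\ref{lem:large-deg-step-g-monotone}) that $g_r(x) = g(s_r(x))$ where $g(s) := \bE_{Z\sim\cN(0,1)}\tanh(s + \sqrt{s}\,Z)$ is strictly increasing on $[0,\infty)$ with $g(0)=0$, and $s_r(x) = \frac{1}{2(r-1)}\big((1+x)^{r-1}-(1-x)^{r-1}\big)$. Fix the test point $x_0 = 7/10$. Then $r\mapsto s_r(x_0)$ is nondecreasing for $r\ge 3$: writing $s_r(x_0) = \frac{(1+x_0)^{r-1}}{2(r-1)} - \frac{(1-x_0)^{r-1}}{2(r-1)}$, the first summand is nondecreasing in $r$ because the ratio of its value at $r+1$ to its value at $r$ equals $\frac{(1+x_0)(r-1)}{r}$, which is $\ge 1$ once $r-1\ge 1/x_0$, while the subtracted term $\frac{(1-x_0)^{r-1}}{2(r-1)}$ is strictly decreasing in $r$. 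Composing with the increasing function $g$, the sequence $r\mapsto g_r(x_0)$ is nondecreasing for $r\ge 7$, so it suffices to prove the single inequality $g_7(7/10) > 7/10$.

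To establish this, first note $s_7(7/10) = \frac{(17/10)^6-(3/10)^6}{12} > 2$ by direct computation, so by monotonicity of $g$ it is enough to show $g(2) = \bE_Z\tanh\!\big(2+\sqrt{2}\,Z\big) > 7/10$. I would prove this through the identity $g(s) = 1 - 2\,\bE\big[(1+e^{2W})^{-1}\big]$ with $W\sim\cN(s,s)$, upper bounding $\bE\big[(1+e^{2W})^{-1}\big]$ as follows: partition $\bR$ into finitely many intervals, on each interval bound the decreasing integrand $(1+e^{2w})^{-1}$ by its value at the left endpoint, control the far tail $|Z|>T$ using $(1+e^{2W})^{-1}\le 1$ together with $\bP(|Z|>T)\le 2e^{-T^2/2}$, and plug in rigorous two-sided enclosures of the standard normal CDF at the grid points. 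Since the true value $g(2)\approx 0.768$ exceeds $7/10$ by roughly $0.068$, a partition with mesh about $1/4$ leaves a comfortable margin.

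The only delicate ingredient is that last numerical verification: the partition and the CDF/tail estimates must be chosen so that the total accumulated slack stays safely under that margin. Everything else is elementary. In particular, the easy analytic bound $g(s)\ge 1-e^{-s/2}$ --- which follows from $(1+e^{2W})^{-1} = e^{-W}/(2\cosh W)\le \tfrac{1}{2} e^{-W}$ and $\bE[e^{-W}] = e^{-s/2}$ --- does not suffice here, because $1-e^{-1} < 7/10$; this is why I would carry out the finite-partition estimate instead of hunting for a slicker closed form. If a larger safety margin were desired, one could test $x_0\in\{3/4,\,4/5\}$ instead, for which $s_7(x_0)$ is even larger and the rest of the argument goes through verbatim.
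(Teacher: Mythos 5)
Your proposal is correct and follows essentially the same route as the paper: the paper likewise fixes a single test point (it uses $x=0.8$ rather than $7/10$), numerically verifies $g_7(x)>x$, and extends to all $r\ge 7$ via the monotonicity of $r\mapsto s_r(x)$ composed with the increasing function $g$. Your additions---an explicit argument that $s_r(x_0)$ is nondecreasing in $r$, the further reduction $g_7(7/10)\ge g(2)$, and a sketch of a rigorous quadrature bound for $g(2)>7/10$---simply flesh out the details the paper compresses into ``we can numerically verify.''
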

\begin{proof}
    We can numerically verify that $g_7(0.8) > 0.8$.
    Note that $s_r(0.8)$ is increasing for $r\ge 7$.
    Therefore for $r\ge 7$, we have $g_r(0.8) \ge g_7(0.8) > 0.8$.
\end{proof}


\subsection{Proof of Theorem~\ref{thm:boht-recon-large-d}}
We are now ready to prove Theorem~\ref{thm:boht-recon-large-d}.
\begin{proof}[Proof of Theorem~\ref{thm:boht-recon-large-d}]
  Choose $x\in (0,1)$ so that $g_r(x) > x$ via Lemma~\ref{lem:large-deg-step-g-r7}.
  By continuity of $g_r$ (Lemma~\ref{lem:large-deg-step-g-monotone}), there exists $\epsilon>0$ such that $g_{r,d,\lambda}(x) > x + \epsilon$ for $(r-1)d\lambda^2=1-\epsilon$.
  Note that $g_{r,d,\lambda}(x)$'s dependence on $d$ and $\lambda$ is only through $d\lambda^2$.

  Take $d_0 = d_0(r,\epsilon)$ in Prop.~\ref{prop:large-deg-step}.
  For any $d>d_0$, choose $\lambda\in[0,1]$ such that $(r-1)d\lambda^2 = 1-\epsilon$.
  By Prop.~\ref{prop:large-deg-step}, choice of $\epsilon$, and Lemma~\ref{lem:large-deg-step-g-monotone}, for all BMS $P$ with $C_{\chi^2}(P) \ge x$ we have
  \begin{align}
    C_{\chi^2}(\BP(P)) \ge g_{r,d,\lambda}(C_{\chi^2}(P)) - \epsilon \ge x.
  \end{align}
  Therefore
  \begin{align}
    \lim_{k\to \infty} I_{\chi^2}(\sigma_\rho; T_k, \sigma_{L_k}) = \lim_{k\to \infty} C_{\chi^2}(M_k) \ge x.
  \end{align}
  Finally
  \begin{align}
    \lim_{k\to \infty} I(\sigma_\rho; T_k, \sigma_{L_k}) \ge \lim_{k\to \infty} \frac {\log e} 2 I_{\chi^2}(\sigma_\rho; T_k, \sigma_{L_k}) \ge \frac {x \log e}2,
  \end{align}
  where the first step is because $C(P) \ge \frac {\log e}2 C_{\chi^2}(P)$ for any BMS $P$.
\end{proof}

\end{document}